\documentclass[12pt]{amsproc}
\usepackage{amsthm,amssymb,amsmath,amsopn,amsfonts,etoolbox,mathptmx}
\usepackage{stmaryrd,calc,mathrsfs,enumerate}
\usepackage{color}
\usepackage{marginnote}
\usepackage{todonotes}

\usepackage{geometry}
\geometry{top=0.4in,left=0.5in,right=0.5in,bottom=0.4in,includefoot,includehead}

\usepackage{prerex}

%--------------- change font-styles of Title, Section, Abstract
\makeatletter
\patchcmd{\@settitle}{\uppercasenonmath\@title}{}{}{}
\patchcmd{\@setauthors}{\MakeUppercase}{\scshape}{}{}
%\patchcmd{\section}{\scshape}{}{}{}
\patchcmd{\section}{\scshape}{\bfseries}{}{}
\renewcommand{\@secnumfont}{\bfseries}
\patchcmd{\abstract}{\scshape\abstractname}{\textbf{\abstractname}}{}{}
\makeatother

\usepackage{hyperref} % Attiva link cliccabili nel documento. Il comando \url{''my_url''} per hyperlink.
\hypersetup{
	colorlinks=true,
	citecolor=blue,
	linkcolor=red
}

%\pagestyle{plain}
%\usepackage{fancyhdr}
%\fancypagestyle{plain}{%
%\fancyhf{} % clear all header and footer fields
%\fancyhead[RO,RE]{\thepage} %RO=right odd, RE=right even
%\cfoot{\thepage}
%\renewcommand{\headrulewidth}{0pt}
%\renewcommand{\footrulewidth}{0pt}
%}
%\pagestyle{plain}

%%
%% The following is very useful in keeping track of labels whil
%% writing.  The variant   \usepackage[notcite]{showkeys}
%% does not show the labels on the \cite commands.
%% 

%\usepackageshowkeys

\newtheorem{theorem}{Theorem}[section]
\newtheorem{proposition}[theorem]{Proposition}
\newtheorem{lemma}[theorem]{Lemma}
\newtheorem{cor}[theorem]{Corollary}

%%% 
%%% The following gives definition type environments (which only differ
%%% from theorem type invironmants in the choices of fonts).  The
%%% numbering is still tied to the theorem counter.
%%% 

\theoremstyle{definition}
\newtheorem{definition}[theorem]{Definition}
\newtheorem{ex}[theorem]{Example}

%%% 
%%% The following gives remark type environments (which only differ
%%% from theorem type invironmants in the choices of fonts).  The
%%% numbering is still tied to the theorem counter.
%%% 

\theoremstyle{remark}
\newtheorem{remark}[theorem]{Remark}

%%%
%%% The following, if uncommented, numbers equations within sections.
%%% 

\newcommand{\im}{\mathrm{im}\;}

\newcommand{\Id}{\mathrm{Id}}

\title{Hodge theory on tropical curves}
\author{Yury V. Eliyashev}
\address{Yu.V.~Eliyashev, National Research University Higher School of Economics, Saint Petersburg, Russia}
\email{eliyashev@gmail.com}

\begin{document}

\begin{abstract}
We construct an analog of the Hodge theory on complex manifolds on tropical curves. We use the analytical approach to the problem, it is based on language of tropical differential forms and methods of $L^2-$cohomologies. 

\end{abstract}
\maketitle
\section{Introduction}

In this paper we construct a tropical analog of the classical Hodge theory for K\"{a}hler manifolds.
We study only the case of one-dimensional tropical varieties, it is a tropical analog of the Hodge theory on smooth complex curves.

Let us very briefly recall the classical Hodge theory for complex curves. For general references on the Hodge theory see \cite{Dem}, \cite{GH}.
Given a smooth compact complex curve $C$ of genus $n.$ 
Let $\mathcal{E}^{p,q}(C)$ be the space of smooth $(p,q)-$differential forms on $C.$  
The Dolbeault cohomology $H^{p,q}_{\overline{\partial}}(C)$ is the cohomology group of the complex 
$(\mathcal{E}^{p,*}(C),\overline{\partial}),$ where $\overline{\partial}$ is a differential
$$\overline{\partial}:\mathcal{E}^{p,q}(C)\rightarrow\mathcal{E}^{p,q+1}(C).$$
Suppose $g$ is a hermitian metric on $C$ and $\omega$ is the corresponding K\"{a}hler form.
The metric $g$ induces a scalar product on $\mathcal{E}^{p,q}(C)$ and defines the Hodge star operator $$*:\mathcal{E}^{p,q}(C) \rightarrow \mathcal{E}^{1-p,1-q}(C).$$
Let $\overline{\partial}^*$ be a metric adjoin to $\overline{\partial}.$
The Laplace-Beltrami operator is defined as follows:
$$\Delta = \overline{\partial}^*\overline{\partial}+\overline{\partial}\overline{\partial}^*: \mathcal{E}^{p,q}(C) \rightarrow \mathcal{E}^{p,q}(C).$$
The space of harmonic forms $\mathcal{H}^{p,q}(C)$ is by definition the kernel of $\Delta:\mathcal{E}^{p,q}(C) \rightarrow \mathcal{E}^{p,q}(C).$

Our main goal is to prove a tropical analog of the following statement.
\begin{theorem}
Every harmonic form $\varphi\in \mathcal{H}^{p,q}(C)$ is $\overline{\partial}-$closed and, consequently, defines a cohomology class 
$[\varphi]\in H^{p,q}_{\overline{\partial}}(C)$. The map $\varphi \rightarrow [\varphi]$ is an isomorphism between 
$\mathcal{H}^{p,q}(C)$ and $H^{p,q}_{\overline{\partial}}(C)$.
The Hodge star operator is an isomorphism between the spaces of harmonics forms 
$*:\mathcal{H}^{p,q}(C)\simeq\mathcal{H}^{1-p,1-q}(C).$
The dimensions of cohomology groups are equal to
$$\dim_{\mathbb{C}} H^{0,0}_{\overline{\partial}}(C) = \dim_{\mathbb{C}} H^{1,1}_{\overline{\partial}}(C)=1,$$
$$\dim_{\mathbb{C}} H^{1,0}_{\overline{\partial}}(C) = \dim_{\mathbb{C}} H^{0,1}_{\overline{\partial}}(C)=n,$$
where $n$ is the genus of $C.$
\end{theorem}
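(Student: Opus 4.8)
The plan is to reduce the whole statement to one analytic fact, the Hodge decomposition, and then to read off the three assertions by integration by parts and bookkeeping of bidegrees. First I would verify that $\Delta$ is a nonnegative, formally self-adjoint elliptic operator; ellipticity is a local computation of its principal symbol. On the compact curve $C$ the standard machinery for elliptic operators on compact manifolds---Sobolev spaces, a priori estimates, elliptic regularity, and the Fredholm alternative---then yields that $\mathcal{H}^{p,q}(C)=\ker\Delta$ is finite dimensional and that there is an $L^2$-orthogonal decomposition
\[
\mathcal{E}^{p,q}(C)=\mathcal{H}^{p,q}(C)\oplus\overline{\partial}\,\mathcal{E}^{p,q-1}(C)\oplus\overline{\partial}^*\mathcal{E}^{p,q+1}(C),
\]
with the three summands mutually orthogonal. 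I expect this to be the main obstacle: it is the single genuinely analytic input, and it is exactly the ingredient that must later be rebuilt by an $L^2$-argument in the tropical setting.

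Granting the decomposition, the rest is formal. Integration by parts gives $\langle\Delta\varphi,\varphi\rangle=\|\overline{\partial}\varphi\|^2+\|\overline{\partial}^*\varphi\|^2$, so $\Delta\varphi=0$ forces both $\overline{\partial}\varphi=0$ and $\overline{\partial}^*\varphi=0$; in particular a harmonic form is $\overline{\partial}$-closed and has a well-defined class $[\varphi]$. For injectivity of $\varphi\mapsto[\varphi]$, if a harmonic $\varphi$ equals $\overline{\partial}\psi$ then $\|\varphi\|^2=\langle\overline{\partial}\psi,\varphi\rangle=\langle\psi,\overline{\partial}^*\varphi\rangle=0$ since $\varphi$ is co-closed, so $\varphi=0$. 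For surjectivity, I would take a $\overline{\partial}$-closed $\alpha$, write $\alpha=h+\overline{\partial}\beta+\overline{\partial}^*\gamma$, apply $\overline{\partial}$, and use $\overline{\partial}\alpha=\overline{\partial}h=0$ to get $\overline{\partial}\,\overline{\partial}^*\gamma=0$, whence $\|\overline{\partial}^*\gamma\|^2=\langle\overline{\partial}\,\overline{\partial}^*\gamma,\gamma\rangle=0$ and $\overline{\partial}^*\gamma=0$; thus $[\alpha]=[h]$ with $h$ harmonic.

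For the Hodge star I would first observe that on a curve every Hermitian metric is automatically K\"{a}hler, because the fundamental form $\omega$ has top degree and $d\omega$ is a $3$-form on a real surface, hence vanishes. The K\"{a}hler identities then give $\Delta=\tfrac12\Delta_d$ up to normalization, and the star commutes with the de Rham Laplacian, so $*$ carries $\mathcal{H}^{p,q}(C)$ isomorphically onto $\mathcal{H}^{1-p,1-q}(C)$, its inverse being $*$ again up to sign. The dimensions then fall out: $H^{0,0}_{\overline{\partial}}(C)=\ker\bigl(\overline{\partial}\colon\mathcal{E}^{0,0}(C)\to\mathcal{E}^{0,1}(C)\bigr)$ is the space of global holomorphic functions, i.e. the constants on the connected compact $C$, so it is $1$-dimensional; and $H^{1,0}_{\overline{\partial}}(C)=\ker\bigl(\overline{\partial}\colon\mathcal{E}^{1,0}(C)\to\mathcal{E}^{1,1}(C)\bigr)$ is the space of global holomorphic $1$-forms, whose dimension is the genus $n$---by definition of the genus, or, if $n$ is taken topologically, by comparison with the Betti numbers $b_0=b_2=1$, $b_1=2n$ through the Hodge decomposition $H^k_{dR}(C;\mathbb{C})=\bigoplus_{p+q=k}H^{p,q}_{\overline{\partial}}(C)$. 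The remaining two values then follow from the star isomorphism, which identifies $H^{1,1}$ with $H^{0,0}$ and $H^{0,1}$ with $H^{1,0}$, giving $1$ and $n$ respectively.
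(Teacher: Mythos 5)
This theorem is the one statement in the paper that is never proved there: it is quoted purely as classical background, with the reader referred to \cite{Dem} and \cite{GH}, and all of the paper's actual work goes into its tropical analogue (Theorem \ref{th.hodge}). So there is no proof in the paper to compare against; what you have written is the standard elliptic-theory proof found in those references, and it is essentially correct. Ellipticity of $\Delta$ on the compact curve gives finite-dimensionality of $\ker\Delta$ and the orthogonal Hodge decomposition; the identity $\langle\Delta\varphi,\varphi\rangle=\|\overline{\partial}\varphi\|^2+\|\overline{\partial}^*\varphi\|^2$ gives that harmonic forms are closed and co-closed; and your injectivity and surjectivity arguments are the standard ones. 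Your diagnosis that the Hodge decomposition is the single analytic input which must be rebuilt by hand in the tropical setting is exactly how the paper proceeds: lacking any elliptic machinery on a metric graph, it replaces that input by explicit local solution operators $T_U$ (Lemma \ref{lm.invers}) and a \v{C}ech argument showing that $\im d''$ is closed, from which the decomposition $\mathcal{L}^{p,1}(\Gamma)=\im d''\oplus(\im d'')^{\perp}$ and the tropical analogues of your formal steps (Proposition \ref{pr.har_cl} and Proposition \ref{pr.hode_iso}) follow.

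One correction of conventions is needed in your Hodge star paragraph. On a Riemann surface the $\mathbb{C}$-linear star preserves the bidegree of $1$-forms ($*\,dz=-i\,dz$, $*\,d\overline{z}=i\,d\overline{z}$), so it maps $\mathcal{H}^{1,0}(C)$ to itself, not to $\mathcal{H}^{0,1}(C)$. The operator realizing $\mathcal{H}^{p,q}(C)\simeq\mathcal{H}^{1-p,1-q}(C)$, as the theorem states it, is the conjugate-linear star $\overline{*}\,\alpha=*\overline{\alpha}$, i.e.\ the one entering the scalar product $(\varphi,\psi)=\int_C\varphi\wedge\overline{*}\,\psi$; this is implicitly the paper's convention, and in the tropical setting the distinction disappears because everything is real. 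Your argument survives verbatim under this reading: $\Delta_{\overline{\partial}}=\tfrac12\Delta_d$ is a real operator, hence commutes with conjugation as well as with $*$, so $\overline{*}$ maps harmonic forms to harmonic forms and squares to $\pm\Id$. With that fix, the dimension count at the end is correct.
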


A one-dimensional tropical variety is essentially a metric graph $\Gamma$ with some additional features.
An analog of smooth $(p,q)-$differential forms is a special class of tensor fields on edges of $\Gamma$ satisfying some boundary conditions at vertices. We denote this class of tensor by $\mathcal{E}^{p,q}(\Gamma)$ and call it the \emph{space of regular tropical superforms} of degree $(p,q).$
There are a differential $$d'':\mathcal{E}^{p,q}(\Gamma)\rightarrow \mathcal{E}^{p,q+1}(\Gamma)$$ and 
the cohomology group $H^{p,q}_{d''}(\Gamma)$ of the complex $(\mathcal{E}^{p,*},d'').$

For references on superforms and tropical cohomologies
see \cite{IKMZ}, \cite{JSS}, \cite{Lag}: notion of tropical cohomologies was introduce in \cite{IKMZ} using methods of algebraic topology, notion of tropical superforms, which play in the tropical case role of smooth differential forms, was introduced in \cite{Lag}, and in \cite{JSS} differential topological approach to tropical cohomologies was developed, i.e., an analog of the de Rham cohomology theory. In \cite{GJR} the de Rham cohomology of metric graphs is considered in a fashion quite similar to our paper.

The space $\mathcal{E}^{p,q}(\Gamma),$ the operator $d'',$ and the cohomology group $H^{p,q}_{d''}(\Gamma)$ 
play in the tropical case the same role as $\mathcal{E}^{p,q}(C), \overline{\partial}, $ and $H^{p,q}_{\overline{\partial}}(C)$ in the complex case.

There are tropical analogs of a K\"{a}hler form and a hermitian metric on $\Gamma.$
This tropical K\"{a}hler form induces a scalar product on $\mathcal{E}^{p,q}(\Gamma).$ Let $d''^*$ be the metric adjoin operator to $d''$ with respect to this scalar product.
Then we can define the Laplace-Beltrami operator 
$$\Delta = d'' d''^*+ d''^* d''.$$ The space of \emph{harmonic forms} $\mathcal{H}^{p,q}(\Gamma)$ is by definition the kernel of 
$\Delta.$

The genus of tropical curve $\Gamma$ is, by definition, the rank of $H^{1}(\Gamma),$ where $H^{1}(\Gamma)$ is the usual topological cohomology group of the graph $\Gamma.$ The main result of this paper is the following
\begin{theorem}
 Let $\Gamma$ be a tropical curve of genus $n.$ Every harmonic superform $\varphi\in \mathcal{H}^{p,q}(\Gamma)$ is $d''-$closed and, consequently, defines the cohomology class $[\varphi]\in H^{p,q}_{d''}(\Gamma)$. The map $\varphi \rightarrow [\varphi]$ is an isomorphism between $\mathcal{H}^{p,q}(\Gamma)$ and $H^{p,q}_{d''}(\Gamma).$
 The Hodge star operator maps harmonic superform to harmonic superform and the map
 $*:\mathcal{H}^{p,q}(\Gamma) \rightarrow \mathcal{H}^{1-p,1-q}(\Gamma)$ is an isomorphism. 
 There are isomorphisms $$H^{1,1}(\Gamma)\simeq H^{0,0}(\Gamma) \simeq H^{0}(\Gamma,\mathbb{R})\cong \mathbb{R}$$ and
  $$H^{1,0}(\Gamma)\simeq H^{0,1}(\Gamma) \simeq H^{1}(\Gamma,\mathbb{R}) \cong \mathbb{R}^n.$$ 
\end{theorem}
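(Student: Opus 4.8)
The plan is to follow the classical Hodge-theoretic template, exploiting that on a one-dimensional tropical variety the relevant analysis reduces to ordinary differential equations on the edges of $\Gamma$ coupled by matching conditions at the vertices. First I would record the basic identity
\[ (\Delta\varphi,\varphi) = \|d''\varphi\|^2 + \|d''^*\varphi\|^2, \]
valid for $\varphi\in\mathcal{E}^{p,q}(\Gamma)$, obtained by expanding $\Delta = d''d''^*+d''^*d''$ and integrating by parts. The point requiring care is that integration by parts over $\Gamma$ produces boundary contributions at the vertices; these must cancel, and this is exactly what the regularity (boundary) conditions built into the definition of $\mathcal{E}^{p,q}(\Gamma)$ are designed to guarantee, so that $d''^*$ really is the adjoint of $d''$ on the whole space. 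Granting this, the identity shows that $\varphi$ is harmonic if and only if $d''\varphi=0$ and $d''^*\varphi=0$; in particular every harmonic superform is $d''$-closed and defines a class in $H^{p,q}_{d''}(\Gamma)$.

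The analytic core is the orthogonal Hodge decomposition
\[ \mathcal{E}^{p,q}(\Gamma) = \mathcal{H}^{p,q}(\Gamma)\oplus \im d'' \oplus \im d''^*. \]
I would establish this by $L^2$ methods: complete $\mathcal{E}^{p,q}(\Gamma)$ with respect to the scalar product induced by the tropical K\"{a}hler form, realize $\Delta$ as a self-adjoint operator whose domain encodes the vertex conditions, and solve $\Delta\psi=\alpha$ for $\alpha$ orthogonal to $\mathcal{H}^{p,q}(\Gamma)$. On the graph this is a second-order Sturm--Liouville problem on each edge coupled by the matching conditions, hence formally elliptic in the trivial one-dimensional sense and Fredholm; the closed range theorem then yields the decomposition. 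Once it is in hand, the isomorphism $\mathcal{H}^{p,q}(\Gamma)\simeq H^{p,q}_{d''}(\Gamma)$ is purely formal, since the $d''$-closed forms are $\mathcal{H}^{p,q}\oplus\im d''$ while the exact ones are $\im d''$, so each class has a unique harmonic representative. I expect this decomposition --- and specifically pinning down the correct self-adjoint realization of $\Delta$ with the right vertex conditions, together with verifying that its kernel is precisely the space of \emph{regular} harmonic superforms and not some larger distributional kernel --- to be the main obstacle.

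For the Hodge star I would show that $*$ commutes with $\Delta$ and hence preserves harmonicity. The cleanest route is a tropical K\"{a}hler identity equating the $d''$-Laplacian with the $d'$-Laplacian (and both with half the total Laplacian $dd^*+d^*d$), together with the standard relations $* d = \pm\, d^* *$ and $* d^* = \pm\, d\, *$, which force $*$ to commute with the total Laplacian; in the one-dimensional case one may instead verify $*d'' = \pm\, d''^* *$ directly from the bidegree bookkeeping. Since $** = \pm\,\Id$, the operator $*$ is invertible, so $*:\mathcal{H}^{p,q}(\Gamma)\to\mathcal{H}^{1-p,1-q}(\Gamma)$ is an isomorphism.

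It remains to compute the spaces. Because $p,q\in\{0,1\}$, for $(0,0)$- and $(1,0)$-forms the operator $d''^*$ lands in a zero space, so harmonicity reduces to $d''\varphi=0$. A harmonic $(0,0)$-form is a function that is constant on each edge and continuous across vertices, hence constant on each connected component, reproducing $H^0(\Gamma,\mathbb{R})$, which is $\mathbb{R}$ for connected $\Gamma$. A harmonic $(1,0)$-form has edge-constant coefficient, and the vertex conditions impose precisely the balancing (Kirchhoff-type) relation characterising a $1$-cocycle, so $\mathcal{H}^{1,0}(\Gamma)$ is identified with the space of harmonic $1$-cochains, namely $H^1(\Gamma,\mathbb{R})\cong\mathbb{R}^n$, where $n$ is the first Betti number, i.e.\ the genus. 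Combining this with the star isomorphism gives $\mathcal{H}^{1,1}(\Gamma)\cong\mathcal{H}^{0,0}(\Gamma)\cong\mathbb{R}$ and $\mathcal{H}^{0,1}(\Gamma)\cong\mathcal{H}^{1,0}(\Gamma)\cong\mathbb{R}^n$, and via the cohomology isomorphism of the second paragraph this yields all the stated identifications.
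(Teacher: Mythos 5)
Your overall template (harmonic $\Leftrightarrow$ $d''$-closed and $d''^*$-closed, a closed-range/Hodge decomposition, $*$ commuting with $\Delta$, then the explicit computation of $\mathcal{H}^{0,0}$ and $\mathcal{H}^{1,0}$ as edge-constant forms, Kirchhoff giving the cycle space) is the same as the paper's, and your last two paragraphs agree with it almost step for step. The genuine gap is in the analytic core, exactly the step you flag and then dispose of too quickly: you justify the closed range of $d''$ (equivalently, the Fredholm property of $\Delta$) by saying the problem is a Sturm--Liouville problem on each edge, hence ``formally elliptic in the trivial one-dimensional sense and Fredholm.'' That argument is adequate only when every edge is compact and the coefficient $g(x)$ of the K\"{a}hler form is bounded away from $0$ and $\infty$. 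But a tropical curve with a degree-one vertex has an edge isometric to $[-\infty,0]$, and the requirement $\int_\Gamma g<+\infty$ forces $g$ to degenerate along it; the resulting $L^2$ spaces are weighted differently in each bidegree ($\int \varphi^2 g\,dx$ for functions, $\int \varphi^2\,dx$ for $(1,0)$- and $(0,1)$-forms, $\int \varphi^2 g^{-1}dx$ for $(1,1)$-forms), and no compact-resolvent or standard elliptic argument applies near $-\infty$. The hypotheses cannot be avoided: with the non-admissible ``trivial'' metric $g=d'x\wedge d''x$ on an infinite edge, the constant function $1$ is harmonic but has infinite norm, so the conclusion $\mathcal{H}^{0,0}(\Gamma)\cong H^0(\Gamma,\mathbb{R})$ breaks down. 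Hence any correct proof must actually use conditions (2) and (3) of the paper's definition of a K\"{a}hler form, namely $\int_\Gamma g<\infty$ and $\int_e x^2 g\,dx<\infty$ on infinite edges, and your proposal never invokes either. This is precisely where the paper does its work: its main technical lemma constructs a bounded solution operator $T_U$ for $d''$ on a neighborhood $U\cong[-\infty,a)$ of each degree-one vertex, with estimates such as $|\psi(x)|\le\|\omega\|\bigl(\int_{-\infty}^x g(t)\,dt\bigr)^{1/2}$ that depend on these convergence hypotheses, and it then proves closedness of $\mathrm{im}\, d''$ not by ellipticity but by exhibiting $\mathrm{im}\, d''$ as the kernel of a continuous operator $\varepsilon\delta T$ from $\mathcal{L}^{p,1}(\Gamma)$ into a finite-dimensional \v{C}ech cochain group.

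A second, related, gap is in your first paragraph: you claim $d''^*$ is genuinely the adjoint of $d''$ because boundary contributions cancel thanks to the regularity conditions built into $\mathcal{E}^{p,q}(\Gamma)$. But the harmonic space is defined inside the $L^2$ completion, where the domains of $d''$ and $d''^*$ consist of weakly differentiable, generally non-regular forms (the paper even remarks that $*$ fails to preserve regularity, so one cannot stay inside $\mathcal{E}^{p,q}(\Gamma)$). For regular forms the boundary terms vanish trivially, since such forms are constant or zero near infinity; for weakly differentiable forms the boundary term at $-\infty$ is not zero for formal reasons, and the paper kills it with the estimate $|\varphi(x)\overline{\psi}(x)|\le\|\omega\|\cdot\|\tau\|\cdot|x|^{-1/2}\int_{-\infty}^x t^2 g(t)\,dt\rightarrow 0$, once more relying on $\int_e x^2 g\,dx<\infty$. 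So both analytic pillars of your plan --- adjointness on the full domain and the closed-range property --- are asserted where they need to be proved, and the proof they need runs through the behavior at infinity rather than through the vertex conditions you identify as the main obstacle.
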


We consider our results in the first place as a toy model and a proof of a concept for the tropical Hodge theory, only then we consider as a results about topology of tropical curves. Indeed, one can compute cohomology of a tropical curve using much simpler methods without any functional analysis or differential topology. The actual problem is construct the tropical Hodge theory in higher dimensions. 
To do so one can follow the way of classical complex Hodge theory and methods of this paper, but it seems, that there are many technical obstacles in this way. The main source of these obstacles are: nonsmoothness of tropical varieties, which are locally behave like a polyhedral complexes, and complicated behavior of various analytical objects at infinity, which is typical in $L^2-$cohomology theories. Both problems are illustrated in this paper. The article \cite{Tel} on a PL-Hodge theory was a great source of inspiration for our work. 

The paper is organized as follows. In the second section we introduce the main objects and work with differential-topological part of the problem. In the third section section we develop methods related to the functional analysis and $L^2-$cohomology theory. For general references on $L^2-$methods in the complex Hodge theory see, for example, \cite[Chapter VIII]{Dem}.  

It is interesting that the topic of this paper is closely related to quantum graphs. The main idea of quantum graphs is to study the Schr\"{o}dinger equation and the Laplace equation over a metric graph \cite{BK}. In this case various boundary conditions at vertices of the graph arise. We do not know any source in the literature where methods of quantum graphs were applied to the tropical geometry.

\section{Tropical curves and tropical superforms}
In this section we introduce main objects of our paper: Tropical curves, Tropical superforms, Tropical cohomologies, tropical K\"{a}hler form and operations on them. We study their properties, and prove some results of differential-topological nature. Also, we show the relation of this objects to the complex geometry.

\subsection{Tropical curves.}
\begin{definition}
A \emph{compact connected tropical curve} $\Gamma$ is a connected metric graph with the set of vertexes $V$ and the set of edges $E$ 
satisfying the following condition:
\begin{enumerate}
 \item The sets $E$ and $G$ are finite and non-empty.
 \item The length  $l(e)$ of an edge $e\in E$ is a positive real number or $+\infty.$
 \item The length $l(e)$ is equal to $+\infty$ if and only if $e$ is incident on a degree one vertex.
 \item A finite length edge $e$ is isometric to the closed interval $[-l(e),0]$ with the standard Euclidean metric.
 \item If an infinite length edge $e$  is incident to a degree one vertex and to a vertex of a higher degree, then the edge 
 is isometric to the closed interval $[-\infty,0]$ with the standard Euclidean metric, where $-\infty$ is the image of the degree one vertex.
 \item If an infinite length edge $e$ is incident to two degree one vertices, then this edge is isometric to  $[-\infty,+\infty].$ Since $\Gamma$ is connected it have to be a graph with one edge and two vertices and the whole graph is isometric to $[-\infty,+\infty].$ 
\end{enumerate}
\end{definition}
In this paper we will address a compact connected tropical curve as just a curve.
The \emph{genus} of a curve $\Gamma$ is defined as the rank of cohomology group $H^1(\Gamma).$

\begin{ex}
Let us consider several examples of curves.
\begin{itemize}
 \item Any metric graph with finite-length edges such that all vertices has degree $\geq2$ can be considered as a tropical curve.
 \item The closed interval $[-\infty,+\infty]$ is an example of a genus $0$ curve. We can consider this curve as the tropical projective line $\mathbb{TP}^1:=[-\infty,+\infty].$ 
 \item A finite number of disjoint copies of $[-\infty,0]$ glued together along $0$
$$\Gamma=[-\infty,0]\sqcup\dots\sqcup[-\infty,0]/\sim$$
is another example of a genus $0$ curve.
\end{itemize}
\end{ex}

\begin{remark}
Usually in tropical geometry a tropical variety is defined in terms of polyhedral complexes in $\mathbb{TR}^n=[-\infty,+\infty)^n.$ One can consider these polyhedral complexes as varieties embedded to some ambient space. 
In our definition we do not use any ambient space or embeddings. 
For any compact connected tropical curve $\Gamma$ in the sense of our definition one can construct an isomorphic tropical curve in terms of polyhedral complexes in $\mathbb{TR}^n$.
\end{remark}

\begin{remark}
We can consider $S^1= \mathbb{R}/ a \mathbb{Z},$ where $a\in(0,+\infty),$ as an example of a tropical curve of genus $1.$
Informally it is a metric graph with one edge and no vertexes and it is not consistent with our definition of a curve.
To resolve this problem one can put a vertex to this curve and consider this as a metric graph with one vertex and one loop.
Another approach to deal with this problem is to extend the definition of a tropical curve using the way similar to the definition  a topological manifold in terms of charts and transition maps. According to this approach a tropical curve is a space locally isomorphic to a metric graph and transition maps are given by affine functions. We are not going use this approach in the paper.  
\end{remark}

\subsection{Tropical superforms over $\mathbb{R}$.}

\begin{definition}
A tropical superforms of degree $(p,q), p,q=0,1,$ on $\mathbb{R}$ is a smooth section of 
the line bundle 
$$\Lambda^{p,q}T^* \mathbb{R} := \bigwedge\nolimits^p T^* \mathbb{R}  \otimes \bigwedge\nolimits^q T^* \mathbb{R}.$$ We denote the linear space of $(p,q)-$tropical superforms by $\mathcal{E}^{p,q}(\mathbb{R}).$
\end{definition}
Here $\bigwedge^0 T^* \mathbb{R}$ is a trivial line bundle. In particular, $(0,0)$-tropical superforms are smooth functions on $\mathbb{R}$, 
the spaces of $(1,0)$ and $(0,1)$ tropical superforms can be identified with differential $1-$forms, i.e., with 
tensor fields of valency $(0,1),$ and $(1,1)$-tropical superforms can be identified with with 
tensor fields of valency $(0,2).$

Let $x$ be a cartesian coordinate on $\mathbb{R}.$  We denote by $d' x$ and by $d'' x$ the differential $d x$ which we consider, correspondingly, as a $(1,0)-$tropical form or $(0,1)-$tropical form, and we denote by $d' x \wedge d'' x$ the tensor field $d x \otimes d x$  which we consider as a $(1,1)-$form.
Then any $(0,0),(1,0),(0,1),(1,1)-$tropical superform can be written, correspondingly, as $\varphi(x),\varphi(x) d' x, \varphi(x) d'' x, \varphi(x) d' x \wedge d'' x$ for some smooth function $\varphi(x).$

There is a natural wedge product 
$$\wedge:\mathcal{E}^{p,q}(\mathbb{R})\otimes \mathcal{E}^{p',q'}(\mathbb{R}) \rightarrow \mathcal{E}^{p+p',q+q'}(\mathbb{R}).$$ 
The wedge product of the $(1,0)-$tropical form $d' x$ and the $(0,1)-$form $d'' x$ is defined to be equal to the  $(1,1)-$form $d' x  \wedge d'' x.$
The wedge product satisfies the alternation condition $$d' x  \wedge d'' x = - d'' x \wedge d' x,$$ $$d' x  \wedge d' x= d'' x  \wedge d'' x=0.$$

There is the differential $d'':\mathcal{E}^{p,q}(\mathbb{R}) \rightarrow \mathcal{E}^{p,q+1}(\mathbb{R}).$ It is defined on $(0,0)-$forms, i.e., functions, as
$$d''(\varphi(x))=\frac{\partial \varphi(x)}{\partial x} d'' x$$
and on $(1,0)-$forms as
$$d''(\varphi(x) d' x )= d''(\varphi(x)) \wedge d' x  = - \frac{\partial \varphi(x)}{\partial x} d'x \wedge d'' x,$$
In all other cases $d''$ is equal to zero for dimensional reasons.

In the same way we define the differential operator $$d':\mathcal{E}^{p,q}(\mathbb{R}) \rightarrow \mathcal{E}^{p+1,q}(\mathbb{R}):$$
$d'(\varphi(x))=\frac{\partial}{\partial x}\varphi(x) d' x$ and $d'(\varphi(x) d'' x )= d'(\varphi(x)) \wedge d'' x  =  \frac{\partial}{\partial x}\varphi(x) d'x \wedge d'' x,$ in all other cases $d'$ is equal to zero.

The \emph{tropical integral} over $\mathbb{R}$ of a $(1,1)$-forms $\varphi(x) d' x \wedge d'' x$ is defined as
$$\int_\mathbb{R} \varphi(x) d' x \wedge d'' x = \int_\mathbb{R} \varphi(x) d x,$$
where the right hand side is the usual integral. The integral over an iterval $I$ of $\mathbb{R}$ is defined in the same way $\int_I \varphi(x) d' x \wedge d'' x = \int_I \varphi(x) d x$. 
\begin{remark}

From an abstract point of view to define the tropical integral of $(1,1)$-tropical superform $\omega$ over a $1$-dimensional $\mathbb{R}-$linear space $L$ we need to choose a volume form $\mu$ with a constant coefficient. We can identify this form with a non-zero element of $T^*_0 L,$ since a constant section of $T^*L$ is defined by 
its value at $0.$

This tropical integral depends on the choice of $\mu,$ we denote it by $\int_{(L,\mu)} \omega.$
The form $\mu\otimes\mu$ defines a trivialization of $T^*L\otimes T^*L.$ Hence any $(1,1)$-tropical superforms $\omega$ can be written as $\omega= f(x) \mu\otimes\mu$ for some function $f(x)$ on 
$L.$ 

The integral is defined as
$$\int_{(L,\mu)} \omega = \int_{L} f(x) \mu,$$
where the right-hand side is the usual integral of a differential form of the top degree over an orientated linear space, the orientation of $L$ is 
induced by the form $\mu.$

Notice that for the form $\mu'=-\mu$ we obtain the same integral as for the form $\mu.$
Indeed, we have $\omega = f(x) \mu\otimes\mu = f(x) (-\mu)\otimes(-\mu)=f(x) \mu'\otimes\mu'$ and the integrals 
$$\int_{(L,\mu)} \omega =\int_{L} f(x) \mu =  - \int_{L} f(x) (-\mu)= - \int_{L} f(x) \mu'= \int_{\overline{L}} f(x) \mu'= \int_{(L,\mu')} \omega$$ are the same, where $\overline{L}$ is the space $L$ with an opposite orientation, i.e., the orientation induced by $\mu'$.

On the other hand, scaling of the volume form changes the value of the integral.
Indeed, if $\mu'= c \mu, c>0,$ then  $f(x) \mu\otimes\mu= \frac{1}{c^2}f(x) \mu'\otimes\mu'.$ Thus we get
$$\int_{(L,\mu)} \omega =\int_{L} f(x) \mu,$$ 
$$\int_{(L,\mu')} \omega = \int_{L} \frac{1}{c^2}f(x) \mu' = \int_{L} \frac{1}{c}f(x) \mu = \frac{1}{c} \int_{(L,\mu)} \omega.$$

Let $\Lambda$ be a lattice in $L,$ then it determines the form $\mu$ uniquely up to the sign by the condition 
$\mu(e) = 1,$ where $e$ is a generator of $\Lambda.$ Here $\mu(e)$ is a contraction of  $\mu\in T^*_0 L$ and $e\in T_0 L \simeq L.$ Therefore the lattice $\Lambda$ defines the tropical integral uniquely.

It the case $L=\mathbb{R}$ we choose $\Lambda$ to be equal to $\mathbb{Z}$ and $\mu$  to be equal to the differential of the cartesian coordinate $dx,$ this give us the initial definition of the tropical integral.
\end{remark}

\subsection{Tropical superforms over tropical curve}
Let $e$ be an edge of $\Gamma$ then the space $\mathcal{E}^{p,q}(e)$ of $(p,q)-$tropical superforms over $e$ is defined as the restriction of $\mathcal{E}^{p,q}(\mathbb{R})$ to either 
$[-l(e),0]$ or $(-\infty,0]$ or $(-\infty,+\infty)$ if $e$ is, consequently, isometric to $[-l(e),0]$ or $[-\infty,0]$ or $[-\infty,+\infty].$
An integral $\int_e \omega$ over the edge $e$ of a form $\omega\in\mathcal{E}^{1,1}(e)$ is defined as an tropical integral over the corresponding interval 
of $\mathbb{R}.$

\begin{definition}
The linear space $\widetilde{\mathcal{E}}^{p,q}(\Gamma)$ \emph{of tropical superforms of degree $(p,q),$ $p,q = 0,1$ on a curve $\Gamma$} is defined as 
follows  
$$\widetilde{\mathcal{E}}^{p,q}(\Gamma)=\bigoplus_{e\in E} \mathcal{E}^{p,q}(e).$$ We denote by $\omega_e$ the component over the edge $e$ of the form $\omega\in\widetilde{\mathcal{E}}^{p,q}(\Gamma).$\end{definition}

The integral of a form $\omega\in \widetilde{\mathcal{E}}^{1,1}(\Gamma)$ over $\Gamma$ is defined as the sum of the tropical integrals over all edges:
$$\int_{\Gamma}\omega=\sum_{e\in E}\int_{e}\omega_e.$$

Notice that for a form from $\widetilde{\mathcal{E}}^{p,q}(\Gamma)$ there are no conditions over values of the form at the ends of different edges which represent the same vertex of $\Gamma.$ Actually, this space is not an adequate analog of the smooth differential form on a Riemann surface and will play supplementary role in the paper. The tropical analog of smooth forms is \emph{regular tropical superforms} which is defined below.

\begin{definition}\label{def.reg}
The \emph{space of regular tropical superforms} $\mathcal{E}^{p,q}(\Gamma)$ is a subspace of $\widetilde{\mathcal{E}}^{p,q}(\Gamma).$ Elements of $\mathcal{E}^{p,q}(\Gamma)$ 
should satisfy the following conditions:
\begin{enumerate}
 \item {\bf Continuity.} A $(0,0)$-form $\varphi\in \mathcal{E}^{0,0}(\Gamma)$ is continuous if for any two edges $e,e'\in E$ incident to the same vertex $v,$
 the values of $\varphi_e$ and $\varphi_{e'}$ at the points corresponding to $v$ coincide. In the other words, $\varphi$ have to be a continuous function on the metric graph $\Gamma.$ 
 \item {\bf Kirchhoff's law.} Given a vertex $v\in V.$ Let $E_v\subset E$ be a set of edges incident to this vertex.
 Suppose that an edge $e\in E_v$ is identified with the interval $[-l(e),0]$ and the point $0\in[-l(e),0]$ corresponds to the vertex $v.$
 Let $\varphi\in \widetilde{\mathcal{E}}^{1,0}(\Gamma)$ be a $(1,0)$-form, $\varphi_e=\varphi_e(x) d'x.$
 We say that the form $\varphi$ satisfies Kirchhoff's law at the vertex $v$ if $$\sum_{e\in E_v} \varphi_e(0)=0.$$
 The form $\varphi$ satisfies Kirchhoff's law on the curve $\Gamma$ if it satisfies Kirchhoff's law at every vertex  of $\Gamma$ of degree $\geq 2.$
\item {\bf Regularity at infinity.} We say that a superform $\omega$ is regular at infinity if for any degree one vertex $v$ of $\Gamma$ there is a neighborhood $U$ of $v$ such that the restriction of $\omega$ to $U$
is a constant function if $\omega$ has degree $(0,0)$ and identically equal to zero otherwise, i.e., if $\omega$ has degrees $(1,0),(0,1),(1,1).$
\end{enumerate}
\end{definition}
Thus a form is regular if is regular at infinity and, in addition to this,
is continuous in case of $(0,0)-$forms, or satisfies the Kirchhoff's law in the case of $(1,0)-$form.

\begin{proposition}
 The space of regular tropical superforms $\mathcal{E}^{*,*}(\Gamma)$ is closed under the wedge product and the $d''$-differential.
\end{proposition}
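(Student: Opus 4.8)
The plan is to check closedness by a short case analysis over bidegrees: since $p,q\in\{0,1\}$, both the wedge product and $d''$ send regular forms into a small finite list of bidegrees, and in each case I verify the three conditions of Definition~\ref{def.reg} one at a time. It is convenient to record in advance which condition is actually constrained by the \emph{output} bidegree: continuity is a condition only on $(0,0)$-forms, Kirchhoff's law only on $(1,0)$-forms, and regularity at infinity is a condition on every bidegree. Smoothness of the edgewise coefficients is automatic, since the wedge product multiplies smooth functions and $d''$ differentiates them.

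For the wedge product I first dispatch regularity at infinity uniformly. If the output has positive bidegree, then at least one of the two factors has positive bidegree, and a regular form of positive bidegree vanishes identically in a neighborhood of every degree-one vertex; hence the product vanishes there as well. If the output has bidegree $(0,0)$, then both factors are $(0,0)$-forms, each locally constant near every degree-one vertex, so the product is locally constant there. Continuity must then be checked only for a $(0,0)$ output, i.e.\ for a product $fg$ of two continuous functions, which is again continuous on $\Gamma$. The one genuinely interactive case is Kirchhoff's law for a $(1,0)$ output, which arises as a product $f\varphi$ with $f\in\mathcal{E}^{0,0}(\Gamma)$ and $\varphi\in\mathcal{E}^{1,0}(\Gamma)$. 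At a vertex $v$ of degree $\geq 2$ the $e$-component of $f\varphi$ has coefficient $f_e(x)\varphi_e(x)$, and continuity of $f$ gives $f_e(0)=f(v)$ independently of $e\in E_v$. Factoring this common value out of the sum yields $\sum_{e\in E_v} f_e(0)\varphi_e(0) = f(v)\sum_{e\in E_v}\varphi_e(0)=0$, which vanishes precisely because $\varphi$ satisfies Kirchhoff's law.

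For the differential, $d''$ raises $q$ and so is nonzero only on bidegrees $(0,0)$ and $(1,0)$, with outputs in $(0,1)$ and $(1,1)$ respectively. Both output bidegrees carry only the regularity-at-infinity condition, so no vertex condition at interior vertices has to be rechecked. For $f\in\mathcal{E}^{0,0}(\Gamma)$ the form $d''f=(\partial f/\partial x)\,d''x$ vanishes near every degree-one vertex because $f$ is constant there, and for $\varphi=\varphi_e(x)\,d'x\in\mathcal{E}^{1,0}(\Gamma)$ the form $d''\varphi=-(\partial\varphi_e/\partial x)\,d'x\wedge d''x$ vanishes near every degree-one vertex because $\varphi$ itself vanishes there.

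The only step where a defining condition truly interacts with the operation, rather than being transported trivially, is the Kirchhoff verification for $f\varphi$: continuity of the $(0,0)$-factor is exactly the hypothesis needed to pull the common vertex value out of the Kirchhoff sum. I expect this to be the crux; every other case reduces to the two elementary facts that regular forms of positive bidegree vanish near degree-one vertices and that $d''$ annihilates the locally constant functions that live there.
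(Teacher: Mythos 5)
Your proof is correct and complete; the paper itself dismisses this verification with ``the proof is straightforward,'' and your case analysis is precisely that straightforward check carried out in full. You also correctly identify the one nontrivial interaction --- pulling the common vertex value $f(v)$ out of the Kirchhoff sum for a product $f\varphi$ with $f\in\mathcal{E}^{0,0}(\Gamma)$, $\varphi\in\mathcal{E}^{1,0}(\Gamma)$ --- while all other cases reduce to the vanishing or local constancy of regular forms near degree-one vertices, exactly as the paper's definition of regularity is set up to ensure.
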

The proof is straightforward.  

\begin{theorem}[Stokes' theorem]\label{th.stokes}
If $\omega\in \mathcal{E}^{1,0}(\Gamma),$ then
$\int_\Gamma d'' \omega=0.$
Consequently, if $\varphi\in \mathcal{E}^{p,0}(\Gamma)$ and $\psi\in \mathcal{E}^{1-p,0}(\Gamma),$ then
$$\int_\Gamma d'' \varphi\wedge \psi = (-1)^{p+1} \int_\Gamma \varphi\wedge d''\psi.$$
\end{theorem}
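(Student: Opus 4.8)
The plan is to reduce the first assertion to a computation on each edge and then reassemble the boundary terms vertex by vertex. On an edge $e$ write $\omega_e=a_e(x)\,d'x$; by the definition of $d''$ we have $d''\omega_e=-a_e'(x)\,d'x\wedge d''x$, so the definition of the tropical integral gives $\int_e d''\omega_e=-\int_e a_e'(x)\,dx$. The fundamental theorem of calculus converts each of these into a difference of the values of $a_e$ at the two endpoints of $e$. On a finite edge this is immediate; on an infinite edge the boundary value at the degree-one vertex is an improper limit, which exists and equals $0$ because $\omega$ is regular at infinity (indeed $a_e\equiv 0$ in a neighborhood of that vertex), and this also guarantees convergence of the improper integral. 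Hence $\int_\Gamma d''\omega=\sum_e\int_e d''\omega_e$ is a finite sum of boundary contributions.

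Next I would collect these contributions at each vertex. The one point requiring care is a sign: the Kirchhoff quantity $\varphi_e(0)$ is the coefficient measured in the parametrization placing $0$ at the vertex under consideration, and passing to that parametrization reverses the coordinate via $x\mapsto -l(e)-x$, which flips the sign of the $d'x$-coefficient. Carrying out this bookkeeping, each edge contributes, at each of its endpoints, minus the corresponding Kirchhoff value, so that the total rearranges into $\int_\Gamma d''\omega=-\sum_{v\in V}\sum_{e\in E_v}\varphi_e(0)$. At a vertex of degree $\ge 2$ the inner sum vanishes by Kirchhoff's law; at a degree-one vertex the coefficient $a_e$ vanishes near it by regularity at infinity, so that endpoint contributes nothing. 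Every term is therefore zero and $\int_\Gamma d''\omega=0$.

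For the second assertion I would apply the Leibniz rule $d''(\varphi\wedge\psi)=d''\varphi\wedge\psi+(-1)^{p}\,\varphi\wedge d''\psi$, where $p$ is the total degree of $\varphi\in\mathcal{E}^{p,0}(\Gamma)$ (a direct check on $d'x$-coefficients confirms this sign in both cases $p=0,1$). Since $\varphi\wedge\psi\in\mathcal{E}^{1,0}(\Gamma)$ by the preceding Proposition, the first part applies with $\omega=\varphi\wedge\psi$ and yields $\int_\Gamma d''(\varphi\wedge\psi)=0$. Integrating the Leibniz identity over $\Gamma$ and rearranging then gives $\int_\Gamma d''\varphi\wedge\psi=(-1)^{p+1}\int_\Gamma\varphi\wedge d''\psi$.

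I expect the main obstacle to be precisely the orientation and sign bookkeeping at the vertices: one must verify that the difference-of-endpoints terms produced by the fundamental theorem of calculus, once re-expressed in the vertex-based parametrization used in the definition of Kirchhoff's law, assemble exactly into the Kirchhoff sums with a single overall sign, and simultaneously that the improper integrals on the infinite edges converge and leave no boundary term. Both facts rely essentially on the continuity, Kirchhoff, and regularity-at-infinity conditions built into $\mathcal{E}^{1,0}(\Gamma)$, and the degenerate case $\Gamma\cong[-\infty,+\infty]$ (with no vertex of degree $\ge 2$) must be checked separately, where both boundary terms vanish by regularity at infinity.
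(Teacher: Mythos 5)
Your proposal is correct and follows essentially the same route as the paper: Newton--Leibniz on each edge, boundary terms grouped vertex by vertex and killed by Kirchhoff's law and regularity at infinity, then the pairing identity via the Leibniz rule applied to $\varphi\wedge\psi\in\mathcal{E}^{1,0}(\Gamma)$. Your careful treatment of the orientation reversal $x\mapsto -l(e)-x$ and the resulting sign on the $d'x$-coefficient fills in exactly the bookkeeping the paper's terse proof leaves implicit, and your signs check out.
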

\begin{proof}
Let $\omega$ be an element of $\mathcal{E}^{1,0}(\Gamma).$
Let $e$ be an edge of $\Gamma$ and  $\omega_e(x) d'x$ be a restriction of $\omega$ to $e.$
Using the Newton-Leibniz formula we get $\int_{e} d'' ( \omega_e(x) d'x)= - \omega_e(0) + \omega_e(l(e)).$
Since the integral over $\Gamma$ is a sum of integrals over edges, combining the Newton-Leibniz formula, the Kirchhoff's law, and Regularity at infinity we obtain the first statement. 

The second statement follows from the first statement and the Leibniz's rule. 
\end{proof}

\subsection{Tropical cohomologies}
Let $U$ be an open set in $\Gamma.$ 
We define $\mathcal{E}^{p,q}_\Gamma(U)$ as a linear space of smooth $(p,q)-$forms on $U$ regular in the sense of Definition
\ref{def.reg}. The correspondence $U \rightarrow \mathcal{E}^{p,q}_\Gamma(U)$ defines the sheaf $\mathcal{E}^{p,q}_\Gamma$ of smooth tropical regular superforms over $\Gamma,$  the space $\mathcal{E}^{p,q}_\Gamma(U)$ is the space of sections of $\mathcal{E}^{p,q}_\Gamma$ over $U.$
 
Let $\Omega^1_\Gamma$ be the subsheaf of $d''-$closed forms the sheaf $\mathcal{E}^{1,0}_\Gamma.$ 
Let us notice that $(1,0)-$form $\varphi$ is $d''-$closed if its restriction to an edge $e$ $$\varphi_e(x) d'x$$
has a locally constant coefficient $\varphi_e(x).$

Let us describe the sheaf $\Omega^1_\Gamma$ more explicitly. Given a vertex $v$ of $\Gamma$ of degree $d\geq 2.$ Consider a small $\varepsilon-$neighborhood $U_\varepsilon$ of $v.$ It is isometric to  
\begin{equation}\label{eq.nbhd}
 U_\varepsilon =\bigsqcup_{\mbox{$d$-times}} (-\varepsilon,0]/\sim,
\end{equation}
where points $0$ of different intervals are identified by the equivalence relation. The equivalence class of $0$ is identified with the vertex $v$. 
A section of $\Omega^1_\Gamma$ over $U_\varepsilon$ is a collection of $(1,0)-$forms with constant coefficient 
$$\varphi_j d'x, \; \varphi_j\in\mathbb{R}, \; j=1,\dots,d,$$ where the form $\varphi_j d'x$ is defined on the $j-$th interval $(-\varepsilon,0]\subset U_\varepsilon,$  and the coefficients satisfy the Kirchhoff's law:
$$\sum^d_{j=1} \varphi_j=0.$$

Suppose $v$ is a degree $1$ vertex its $\varepsilon-$neighborhood $U_\varepsilon$ is isometric to $U_\varepsilon=[-\infty,-\varepsilon),$ since section of $\Omega^1_\Gamma$ are regular at infinity, this section are identically equal to zero on $U_\varepsilon.$

Let $\mathbb{R}_\Gamma$ be a subsheaf of locally constant functions on $\Gamma$ of the sheaf $\mathcal{E}^{0,0}_\Gamma.$
Sections of $\mathbb{R}_\Gamma$ are locally constant functions over edges satisfying the continuity property at vertices. Obviously, the subsheaf $\mathbb{R}_\Gamma$  coincides with the subsheaf of $d''-$closed functions the sheaf $\mathcal{E}^{0,0}_\Gamma.$

\begin{remark}
 The sheaves  $\mathbb{R}_\Gamma, \Omega^1_\Gamma, \mathcal{E}^{p,q}_\Gamma$ play in the tropical theory the same role as,
 correspondingly, the sheaves of holomorphic functions $\mathcal{O}_C,$ holomorphic $1-$forms $\Omega^1_C,$ and smooth $(p,q)-$differential forms $\mathcal{E}^{p,q}_C$ on a smooth curve $C$ in the complex case. The differential $d''$ play the same role as the $\overline{\partial}$ operator.
 
 The space $\Omega^1_\Gamma$ was introduced in \cite[Definition 2.25, Tropical 1-form]{Lan}. In that paper it is related to degeneration of complex curves to tropical, and related degeneration of holomorphic forms on curves. 
\end{remark}

\begin{proposition}\label{pr.ex_smooth}
There are exact sequences of sheaves 
$$0 \rightarrow \Omega^1_\Gamma \xrightarrow{i} \mathcal{E}^{1,0}_\Gamma \xrightarrow{d''}  \mathcal{E}^{1,1}_\Gamma \rightarrow 0,$$
$$0 \rightarrow  \mathbb{R}_\Gamma \xrightarrow{i}  \mathcal{E}^{0,0}_\Gamma \xrightarrow{d''}  \mathcal{E}^{0,1}_\Gamma\rightarrow 0,$$
where $i$ is the natural inclusion of subsheaves.
\end{proposition}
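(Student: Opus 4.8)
The plan is to verify exactness stalkwise, since exactness of a sequence of sheaves is a local condition. At every point of $\Gamma$ there are exactly three local models to treat: an interior point of an edge, a vertex of degree $\geq 2$, and a degree-one vertex (an infinite end). Injectivity of $i$ is clear, as it is an inclusion, and exactness at the middle term is immediate from the definitions: $\Omega^1_\Gamma$ and $\mathbb{R}_\Gamma$ were introduced precisely as the subsheaves of $d''$-closed forms inside $\mathcal{E}^{1,0}_\Gamma$ and $\mathcal{E}^{0,0}_\Gamma$. Thus the whole content of the proposition is surjectivity of $d''$ on stalks, i.e. the local solvability of $d''\psi=\omega$ \emph{within the regular class}.

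At an interior point of an edge the curve is locally isometric to an interval of $\mathbb{R}$ and no vertex condition intervenes, so both sequences reduce to the one-variable statement. Writing $\omega=f(x)\,d'x\wedge d''x$ (resp.\ $\omega=h(x)\,d''x$), the Newton-Leibniz formula produces a smooth primitive $\psi=-\left(\int_0^x f\right)d'x$ (resp.\ $\varphi=\int_0^x h$) with $d''\psi=\omega$. At a degree-one vertex the neighborhood $U_\varepsilon$ is an infinite end, and regularity at infinity forces every germ of $\mathcal{E}^{1,1}_\Gamma$ and $\mathcal{E}^{0,1}_\Gamma$ there to vanish identically; taking $\psi=0$ (resp.\ the constant $\varphi=0$) then solves the equation, so surjectivity is automatic and the sequences are trivially exact.

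The only case requiring care is a vertex $v$ of degree $d\geq 2$, with star-shaped neighborhood $U_\varepsilon$ as in (\ref{eq.nbhd}). Here I would solve the equation on each of the $d$ incident half-edges $(-\varepsilon,0]$ separately by integration, obtaining primitives determined only up to an additive constant $c_j$ on the $j$-th branch, and then use this freedom to land in the regular subsheaf. For the first sequence, with $\omega_j=f_j(x)\,d'x\wedge d''x$ and $g_j(x)=-\int_0^x f_j+c_j$, the form $\psi=\sum_j g_j\,d'x$ obeys Kirchhoff's law at $v$ precisely when $\sum_j c_j=0$, which is always arrangeable (e.g.\ $c_j=0$). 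For the second sequence, with $\omega_j=h_j(x)\,d''x$ and $\varphi_j(x)=\int_0^x h_j+c_j$, continuity of $\varphi$ at $v$ amounts to $\varphi_1(0)=\dots=\varphi_d(0)$, i.e.\ $c_1=\dots=c_d$, again freely achievable.

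The main, and really the only, obstacle is this last point: that the local primitive can be chosen inside the regular class while meeting the boundary condition at the vertex. The integration constants supply exactly the degrees of freedom needed — one linear condition (Kirchhoff's law, or equality of the $d$ endpoint values) against the $d$-dimensional space of constants — so local solvability never fails. Assembling the three local models yields surjectivity of $d''$ on every stalk and completes the verification of exactness for both sequences.
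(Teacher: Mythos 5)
Your proof is correct, and its skeleton --- injectivity and middle exactness straight from the definitions, surjectivity via Newton--Leibniz with integration constants adjusted to the vertex condition --- is the same as the paper's; your treatment of interior points and of vertices of degree $\geq 2$ matches the paper's argument almost verbatim. The one genuine divergence is at degree-one vertices: you invoke the fact that regularity at infinity kills the stalks of $\mathcal{E}^{1,1}_\Gamma$ and $\mathcal{E}^{0,1}_\Gamma$ there, so stalk exactness is automatic, whereas the paper solves $d''\psi=\omega$ over the whole end $U_\varepsilon\cong[-\infty,-\varepsilon)$ by the explicit primitive $\psi(x)=-\int_{-\infty}^{x}\omega(t)\,dt$, which converges and is again regular at infinity because $\omega$ vanishes near $-\infty$. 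Since exactness of a sequence of sheaves is a stalkwise condition, your shortcut does prove the proposition as stated. What the paper's extra work buys is a stronger, section-level statement: the sequences of sections are exact over a basis of neighborhoods including the infinite ends, and this is precisely what is invoked later in Proposition \ref{pr.smooth_CdR}, where a finite \emph{acyclic} cover (one whose elements and pairwise intersections have exact section sequences) is needed for the \v{C}ech-to-de Rham isomorphism. Note that a regular $(1,1)$- or $(0,1)$-form on $[-\infty,-\varepsilon)$ need only vanish near $-\infty$, not identically, so your stalk observation alone does not certify such a neighborhood as acyclic; to plug your proof into the rest of the paper, replace the shortcut at the ends by the explicit primitive from $-\infty$ (your other two cases already give section-level exactness).
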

\begin{proof}
The map $i$ is injective by definition.
The kernel of $d''$ consists of forms with coefficients constant on edges. 
These are 
exactly forms either from $\Omega^1_\Gamma$ or from $\mathbb{R}_\Gamma.$ Therefore $\im i = \ker d''.$

The surjectivity of $d''$ follows from the Newton-Leibniz formula. 
Let $U_\varepsilon$ be an $\varepsilon-$neighborhood of a vertex $v$ as defined in (\ref{eq.nbhd}).
Given a $(1,1)-$form $\omega.$ Let $$\omega_j(x) d'x\wedge d'' x$$ be a component of $\omega$
over the $j-$th edge of $U_\varepsilon.$
Let $\varphi_j(x) =  \int^0_x \omega_j(t) d t$ and $\varphi_j(x) d'x$ be a component of the $(1,0)-$form $\varphi$ over $j-$th edge. The form $\varphi$ is regular. Indeed, $\varphi_j(0)=0$, therefore it satisfies the Kirchhoff's law at $v.$
We have $d''\varphi=\omega.$

Let $U_\varepsilon=[-\infty,-\varepsilon)$ be an $\varepsilon-$neighborhood of a degree $1$ vertex and $\omega(x) d'x\wedge d'' x$
be a regular $(1,1)-$form on $U_\varepsilon.$
Suppose $$\varphi(x)=-\int^x_{-\infty} \omega(t) dt$$ then $\varphi(x) d'x$ is a regular form on $U_\varepsilon.$
Indeed, since $\omega$ is regular at infinity it is zero at some neighborhood of $-\infty.$
Hence the integral is convergent and $\varphi(x)$ is equal to zero at the same neighborhood of $-\infty.$

For an interior point of an edge there is a neighborhood isometric to a bounded interval $U_\varepsilon=(-\varepsilon,\varepsilon).$ The exactness of sequences  
over this neighborhood follows from the Newton-Leibniz formula. 
Thus we checked all possible cases and proved that in a neighborhood of any point $x\in\Gamma$ the operator $d''$ is surjective.

The proof of the surjectivity of $d''$ in the second sequences repeats the above arguments.
\end{proof}

Let us define the bigraded cohomology group $H^{p,q}(\Gamma)$ of $\Gamma$ as
$$H^{1,q}(\Gamma)=H^{q}(\Gamma,\Omega^1_\Gamma),$$
$$H^{0,q}(\Gamma)= H^{q}(\Gamma,\mathbb{R}_\Gamma).$$
Since $\mathbb{R}_\Gamma$ is the sheaf of locally constant functions, the group $H^{0,q}(\Gamma)$ is isomorphic to the usual topological cohomology group $H^{q}(\Gamma,\mathbb{R})$ of the graph $\Gamma.$ 

\begin{proposition}\label{pr.smooth_CdR}
The sheaves $\mathcal{E}^{p,q}_\Gamma$ are fine and acyclic.
There is an isomorphism
$H^{p,q}(\Gamma)\cong H^{q}(\mathcal{E}^{p,*}(\Gamma),d''),$
where $H^{q}(\mathcal{E}^{p,*}(\Gamma),d'')$ is the cohomology group of the complex
$$0\rightarrow  \mathcal{E}^{p,0}(\Gamma) \xrightarrow{d''}  \mathcal{E}^{p,1}(\Gamma)\rightarrow 0.$$
\end{proposition}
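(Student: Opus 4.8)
The statement to prove is Proposition~\ref{pr.smooth_CdR}, asserting that the sheaves $\mathcal{E}^{p,q}_\Gamma$ are fine and acyclic and that $H^{p,q}(\Gamma)\cong H^{q}(\mathcal{E}^{p,*}(\Gamma),d'')$.

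This is essentially a tropical version of the Dolbeault/abstract de Rham theorem. The plan is to deduce the cohomology isomorphism from the standard sheaf-theoretic machinery: an acyclic (fine) resolution of a sheaf computes its cohomology. The two short exact sequences from Proposition~\ref{pr.ex_smooth} provide exactly the resolutions I need, provided the resolving sheaves $\mathcal{E}^{p,q}_\Gamma$ are acyclic.

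\textbf{Step 1: Fineness.} First I would show the sheaves $\mathcal{E}^{p,q}_\Gamma$ admit partitions of unity. The key point is that a smooth bump function on the metric graph $\Gamma$ subordinate to an open cover is a section of $\mathcal{E}^{0,0}_\Gamma$: it is continuous, and multiplying a regular superform by such a function preserves continuity, the Kirchhoff's law (since the function is locally constant near each vertex if chosen suitably, or more carefully, since it respects the boundary conditions), and regularity at infinity. I would construct a locally finite partition of unity $\{\rho_i\}$ by smooth functions that are \emph{constant in a neighborhood of every vertex}, so that multiplication by $\rho_i$ carries regular superforms to regular superforms; this exhibits $\mathcal{E}^{p,q}_\Gamma$ as a module over the soft sheaf of such functions and hence as fine.

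\textbf{Step 2: Acyclicity.} Fine sheaves on a paracompact space are soft, and soft sheaves are acyclic (their higher cohomology vanishes). Since $\Gamma$ is compact and metrizable, hence paracompact, fineness immediately yields $H^{q}(\Gamma,\mathcal{E}^{p,*}_\Gamma)=0$ for $q>0$. I would simply cite this standard fact.

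\textbf{Step 3: The de Rham-type isomorphism.} With acyclicity in hand, the two exact sequences of Proposition~\ref{pr.ex_smooth} are acyclic resolutions:
$$0\to \Omega^1_\Gamma \to \mathcal{E}^{1,0}_\Gamma \xrightarrow{d''} \mathcal{E}^{1,1}_\Gamma \to 0, \qquad 0\to \mathbb{R}_\Gamma \to \mathcal{E}^{0,0}_\Gamma \xrightarrow{d''} \mathcal{E}^{0,1}_\Gamma \to 0.$$
By the abstract de Rham theorem, $H^{q}(\Gamma,\Omega^1_\Gamma)$ and $H^{q}(\Gamma,\mathbb{R}_\Gamma)$ are computed as the cohomology of the complex of global sections, which is precisely $H^{q}(\mathcal{E}^{1,*}(\Gamma),d'')$ and $H^{q}(\mathcal{E}^{0,*}(\Gamma),d'')$ respectively. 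Combined with the definitions $H^{1,q}(\Gamma)=H^{q}(\Gamma,\Omega^1_\Gamma)$ and $H^{0,q}(\Gamma)=H^{q}(\Gamma,\mathbb{R}_\Gamma)$, this gives the claimed isomorphism.

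The main obstacle I anticipate is Step~1, the verification of fineness, because the boundary conditions defining regularity (continuity, Kirchhoff's law, vanishing at infinity) are not automatically preserved under multiplication by an arbitrary smooth function. The care lies in choosing the partition of unity so that each $\rho_i$ is locally constant near every vertex of $\Gamma$ and near the infinite ends; then multiplication respects Kirchhoff's law (a constant factor pulls out of the sum $\sum_{e\in E_v}\varphi_e(0)$) and preserves regularity at infinity. Once the existence of such a partition of unity on a metric graph is established, the remaining steps are formal applications of standard homological algebra.
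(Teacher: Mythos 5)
Your proposal is correct and takes essentially the same route as the paper: fineness of $\mathcal{E}^{p,q}_\Gamma$ via partitions of unity in $\mathcal{E}^{0,0}_\Gamma$, hence acyclicity, and then the exact sequences of Proposition~\ref{pr.ex_smooth} combined with standard sheaf machinery (the paper invokes the \v{C}ech-to-de Rham construction over a finite acyclic cover, you the abstract de Rham theorem --- the same argument in different packaging). One minor simplification: your requirement that the $\rho_i$ be locally constant near vertices is stronger than needed, since mere continuity already preserves Kirchhoff's law (the common vertex value $\rho(v)$ factors out of $\sum_{e\in E_v}\rho(v)\varphi_e(0)=\rho(v)\sum_{e\in E_v}\varphi_e(0)=0$), and constancy near the infinite ends is automatic for any section of $\mathcal{E}^{0,0}_\Gamma$ by the regularity-at-infinity condition, so the $\mathcal{E}^{0,0}_\Gamma$-module structure of $\mathcal{E}^{p,q}_\Gamma$ holds with no extra care.
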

\begin{proof}
The proof of this statement repeats the proof of acyclicity of the sheaf of smooth forms on a smooth manifold and the \v{C}ech to de Rham isomorphism on a smooth manifold.

For any open cover $\mathfrak{U}$ of $\Gamma$ there is a smooth partition of unity for the sheaf of regular tropical  $(0,0)$-superforms 
$\mathcal{E}^{0,0}_\Gamma.$ 
 Since the sheaf $\mathcal{E}^{p,q}_\Gamma$ is an $\mathcal{E}^{0,0}_\Gamma-$module, there is a partition of unity 
 on it and $\mathcal{E}^{p,q}_\Gamma$ is a fine sheaf and, consequently,  is acyclic.
 
Let $\mathfrak{U}=\{U_i\}$ be a finite acyclic open cover of $\Gamma,$ i.e., $\mathfrak{U}$ such a cover that for any intersection $U$ of elements of $\mathfrak{U}$ the sequences of section corresponding to the sequences (\ref{pr.ex_smooth}) of sheaves are exact.
Using the standard construction of the \v{C}ech to de Rham isomorphism we prove the proposition.
\end{proof}

\subsection{K\"{a}hler form, inner product and Hodge star operator.}\label{ss.inner_prod}
Let $g=g(x)  d' x \wedge d''x\in \mathcal{E}^{1,1}(\mathbb{R})$ be a positive tropical $(1,1)-$superform over $\mathbb{R}.$
We say that form is positive if $g(x)>0$ for every $x\in \mathbb{R}.$
Since  $g(x) d' x \wedge d''x$ stands for $g(x) d x \otimes d x,$ we can consider $g$ as a Riemannian metric on $\mathbb{R}.$
The Riemannian metric $g$ defines the pointwise scalar product $(\varphi,\psi)_g(x)$ between elements of $\varphi,\psi\in \mathcal{E}^{p,q}(\mathbb{R}).$
Indeed, we can consider elements of $\mathcal{E}^{p,q}(\mathbb{R})$ as tensor fields, a Riemannian metric defines the pointwise scalar product on tensor fields.
Let us define the scalar product $(\varphi,\psi)_g$ between two forms $\varphi,\psi\in\mathcal{E}^{p,q}(\mathbb{R})$ as 
$$(\varphi,\psi)_g=\int_\mathbb{R}(\varphi,\psi)_g(x) g,$$
where $g$ is consider as a tropical $(1,1)-$form and the right-hand side is a tropical integral. 
At this moment we are not concerned with convergence of this integral.
Usually we will omit subscript in $(\cdot,\cdot)_g$ and write $(\cdot,\cdot)$ instead.

Let us describe the scalar product in coordinate terms for the various $p,q:$ 
$$(f(x),h(x))=\int_{\mathbb{R}}g(x) f(x) h(x)  d x$$
$$(f(x)d'x,h(x)d'x)=(f(x)d''x,h(x)d''x)=\int_{\mathbb{R}} f(x) h(x) d x$$
$$(f(x)d' x \wedge d''x,h(x)d' x \wedge d''x)=\int_{\mathbb{R}}\frac{1}{g(x)}   f(x) h(x) d x$$

The Hodge star operator 
$$*_g: \mathcal{E}^{p,q}(\mathbb{R}) \rightarrow \mathcal{E}^{1-p,1-q}(\mathbb{R})$$
is defined by the relation 
$$ \int_\mathbb{R} \varphi \wedge *_g \psi=(\varphi,\psi)_g$$
for every $\varphi, \psi \in \mathcal{E}^{p,q}(\mathbb{R}).$
Usually we will omit the subscript in $*_g$ and write $*$ instead.

The Hodge star is an isometry, i.e., for any $\varphi,\psi\in\mathcal{E}^{p,q}(\mathbb{R})$ holds $$(\varphi,\psi)=(*\varphi,*\psi).$$
Also, for any $\psi\in\mathcal{E}^{p,q}(\mathbb{R})$  holds $$** \psi= (-1)^{p+q}\psi.$$ 
In terms of coordinate  the Hodge star looks as follows:
\begin{equation}\label{eq.hodge}
 \begin{split}
  *f(x) = f(x) g(x) d'x\wedge d''x, \\ *f(x) d'x\wedge d''x =  \frac{1}{g(x)}f(x),\\
  *f(x) d'x =   f(x) d''x,\\ *f(x) d''x = -  f(x) d'x.
 \end{split}
\end{equation}

\begin{remark}
In the differential geometry a Riemannian metric defines the standard scalar product on the space of sections of tensor fields. 
Since the form $g$ is a symmetric tensor field of valence $(0,2)$ we can consider it as a Riemannian metric on $\mathbb{R}.$ Also, we can consider the space $\mathcal{E}^{p,q}(\mathbb{R})$ as a space of tensor fields. Therefore, this Riemannian metric induces the standard scalar product on the space $\mathcal{E}^{p,q}(\mathbb{R}),$ but this scalar product does not coincide with the tropical scalar product defined above. 

Indeed, let $f(x),h(x)$ be functions on $\mathbb{R}$ then the standard scalar product on the space of functions equals
$$(f(x),h(x))=\int_{\mathbb{R}} f(x) h(x) \sqrt{g(x)} d x.$$ On the space of $1-$forms which can be identified with $\mathcal{E}^{1,0}(\mathbb{R})$ or $\mathcal{E}^{0,1}(\mathbb{R})$ the standard scalar is equal to
$$(f(x) d x,h(x) d x)=\int_{\mathbb{R}} f(x) h(x) \frac{1}{\sqrt{g(x)}} d x.$$ The reason for this is that the tropical superforms over $\mathbb{R}$ correspond to the usual differential form over $\mathbb{C}\setminus\{0\},$ not on $\mathbb{R},$ and the scalar product on the space of tropical superforms is 
consistent with the standard scalar product on $\mathbb{C}\setminus\{0\}.$ This correspondence is described in the next subsection.
\end{remark}

If we identify an edge $e$ of $\Gamma$ with an interval of $\mathbb{R},$ then a K\"{a}hler form on this interval defines 
the scalar product $(,)$ and the Hodge star operator on this edge of $\Gamma.$

\begin{definition}\label{def.kh}
A \emph{K\"{a}hler form} $g$ on the curve $\Gamma$ is a $(1,1)-$form $g\in\widetilde{\mathcal{E}}^{p,q}(\Gamma)$
such that 
\begin{enumerate}
 \item $g$ is positive, i.e., in local coordinates it is given by  $g= g(x) d' x \wedge d''x$ with positive $g(x);$
 \item $\int_\Gamma g < + \infty;$
 \item on any infinite length edge $e$ the integral $\int_e x^2 g(x) d' x \wedge d''x$ converges.
\end{enumerate}
\end{definition}
We define the scalar product for $\varphi,\psi \in \widetilde{\mathcal{E}}^{p,q}(\Gamma)$ as follows:
 $$(\varphi,\psi)_g=\int_\Gamma \varphi \wedge *_g \psi.$$ 
\begin{remark}
The last condition in the definition, convergence of $\int_e x^2 g(x) d' x \wedge d''x$, play its role in the study of $L^2-$theory in the next section. It allow us to get some estimates on convergence of various integrals. It is not clear for us what is necessary and sufficient condition here or how this condition can be weakened. 

As Example \ref{ex.FS} shows tropical K\"{a}hler forms that arises from the complex geometry are, actually,
have rapidly decreasing at infinity coefficients, which is, actually, much higher rate of convergence that we required in the definition.
\end{remark}

\begin{remark}
In the tropical setting a K\"{a}hler form $g$ plays the role of a K\"{a}hler form and a hermitian metric in the classical complex geometry. 
Also the K\"{a}hler form $g$ defines a Riemannian metric on each edge of $\Gamma.$ 
Notice that, this Riemannian metric is unrelated to the metric structure on $\Gamma,$ i.e., to the length of the edges.
In general, the Hodge star operator does not preserve the regularity conditions, i.e., there is a regular form $\varphi\in \mathcal{E}^{p,q}(\Gamma)$ such that
$* \varphi$ is not regular. This is important and unfortunate difference between the tropical and the classical settings. Indeed, the Hodge star of a smooth from on a manifold is again a smooth form.   
\end{remark}

We can summarize the results of this section as follows
\begin{theorem}
Let $g$ be a \emph{K\"{a}hler form} on the curve $\Gamma.$ 
Then $\mathcal{E}^{p,q}(\Gamma)$ is a differential bigraded algebra  with the nondegenerate pairing
$$\langle\cdot,\cdot\rangle:\mathcal{E}^{p,q}(\Gamma)\otimes \mathcal{E}^{1-p,1-q}(\Gamma) \rightarrow \mathbb{R},$$
$$\langle\varphi,\psi\rangle=\int_\Gamma \varphi\wedge \psi,$$
and the scalar product
$$(\varphi,\psi)= \int_\Gamma \varphi\wedge *\psi.$$
\end{theorem}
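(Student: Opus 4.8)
The plan is to assemble the three assertions---the differential bigraded algebra structure, nondegeneracy of $\langle\cdot,\cdot\rangle$, and the fact that $(\cdot,\cdot)$ is a genuine scalar product---from the results already proved in this section, filling in only routine local checks. For the algebra structure, closure of $\mathcal{E}^{*,*}(\Gamma)$ under $\wedge$ and $d''$ is exactly the content of the earlier Proposition, the bigrading by $(p,q)$, $p,q\in\{0,1\}$, is immediate from the definition, and $\wedge$ is additive on bidegrees. That $d''$ is a differential follows for dimensional reasons: applying it twice lands in $\mathcal{E}^{p,2}(\Gamma)=0$, so $d''\circ d''=0$. The graded Leibniz rule $d''(\varphi\wedge\psi)=d''\varphi\wedge\psi+(-1)^{p+q}\varphi\wedge d''\psi$ for $\varphi\in\mathcal{E}^{p,q}(\Gamma)$ is checked edgewise, where it reduces to the ordinary Leibniz rule for functions on an interval; since the regularity conditions at vertices and at infinity do not enter a pointwise identity on edge interiors, the rule descends to regular forms.

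For nondegeneracy I would argue as follows. If $\varphi\in\mathcal{E}^{p,q}(\Gamma)$ is nonzero, then some component $\varphi_e$ is nonzero at an interior point $x_0$ of an edge $e$, and I exhibit a regular test form $\psi\in\mathcal{E}^{1-p,1-q}(\Gamma)$ with $\langle\varphi,\psi\rangle\neq 0$. The naive candidate $\psi=*\varphi$ fails, since $*\varphi$ need not be regular (as the preceding remark warns), so instead I take $\psi$ to be a smooth bump supported in a small interval about $x_0$, with coefficient of the sign of $\varphi_e$ there. In each of the four bidegrees $\varphi\wedge\psi$ is a multiple of $\varphi_e\psi_e\,d'x\wedge d''x$ of one definite sign, so $\int_\Gamma\varphi\wedge\psi\neq 0$. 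The one point worth flagging is that a form concentrated in the interior of an edge vanishes near every vertex and near infinity, hence automatically satisfies continuity, Kirchhoff's law, and regularity at infinity; thus the test forms really are regular and the pairing is nondegenerate.

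For the scalar product I must check convergence and positivity. Writing $(\varphi,\psi)=\int_\Gamma(\varphi,\psi)_g(x)\,g$, convergence is automatic on finite edges and on the bounded part of infinite edges, where the coefficients are smooth and $g$ is integrable; near infinity, regularity forces $\varphi,\psi$ to vanish in degrees $(1,0),(0,1),(1,1)$---so the integrand vanishes there---and to be constant in degree $(0,0)$, in which case the integrand is a constant multiple of $g$ and the tail converges precisely because $\int_\Gamma g<+\infty$. Positivity is then read off pointwise: $(\varphi,\varphi)=\int_\Gamma(\varphi,\varphi)_g(x)\,g\geq 0$ since $g>0$ and the pointwise form is positive definite, with equality forcing $\varphi\equiv 0$; symmetry and bilinearity are clear.

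The genuinely substantive step is this last convergence argument, since it is the only place where the Kähler hypotheses---in particular $\int_\Gamma g<+\infty$---are actually used (the moment condition $\int_e x^2 g<\infty$ plays no role here but will in the $L^2$ theory of the next section). Everything else is bookkeeping across the four bidegrees, together with the single observation that interior bump forms are automatically regular, which is what makes both nondegeneracy and the avoidance of $*\varphi$ go through cleanly.
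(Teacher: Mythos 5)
Your proposal is correct and follows the same route as the paper: the paper treats this theorem as a summary of the section, deducing the algebra structure from the earlier closure proposition and justifying convergence of the pairing and scalar product exactly as you do, from regularity at infinity together with $\int_\Gamma g<+\infty$ (and, as you note, the moment condition on $g$ is not used here). The one place you go beyond the paper is nondegeneracy, which the paper asserts but never argues; your bump-form argument --- test forms supported in the interior of an edge are automatically continuous, Kirchhoff, and regular at infinity, which sidesteps the fact that $*\varphi$ need not be regular --- is a correct and genuinely useful filling of that gap.
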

In this theorem the scalar product and and pairing are well-defined for all elements, i.e., all integrals are convergent.
 The convergence follows from the regularity at infinity condition for regular forms and the convergence of the tropical integral $\int_\Gamma g$
 which is required by the definition of the K\"{a}hler form $g.$

\subsection{Tropical superforms and complex geomtery}
At the first glance the tropical superforms and related objects may seem to be a bit artificial constructions. 
In this section we show that these objects can be naturally interpreted in terms of the classical complex geometry. 

The real line $\mathbb{R}$ can be considered as a tropical analog of the complex torus $\mathbb{C}^*.$
There is the map $$\log|z|:\mathbb{C}^*\rightarrow \mathbb{R}$$ between them. 

Let $\mathcal{E}^{p,q}(\mathbb{C}^*)$ be a space of smooth differential $\mathbb{C}$-valued forms of bidegree $(p,q)$ over  $\mathbb{C}^*.$
Let us define the bigraded algebra homomorphism $\Theta:\mathcal{E}^{p,q}(\mathbb{R}) \rightarrow \mathcal{E}^{p,q}(\mathbb{C}^*).$ 
On the generators it is defined as follows
$$\Theta(\varphi(x)) = \varphi(\log|z|),\; \varphi(x)\in \mathcal{E}^{0,0}(\mathbb{R}),$$
$$\Theta(d' x) = \frac{1}{2\sqrt{\pi}} \frac{d z}{z},\; \Theta(d'' x) = \frac{i}{2\sqrt{\pi}} \frac{d \overline{z}}{\overline{z}}.$$
Since $\mathcal{E}^{p,q}(\mathbb{R})$ is an $\mathbb{R}-$algebra, we consider $\Theta$ as an $\mathbb{R}-$algebra homomorphism.
 
Consider the unitary group $U(1)=\{t\in\mathbb{C}:|t|=1\}$ and the standard action $U(1)\times\mathbb{C}^* \rightarrow \mathbb{C}^*,$ i.e.,  $(t,z)\rightarrow t\cdot z.$ This action induces action of $U(1)$ on $\mathcal{E}^{p,q}(\mathbb{C}^*).$ 
The image of $\mathcal{E}^{p,q}(\mathbb{R})$ under $\Theta$ is an $\mathbb{R}$-linear subspace in the space $\mathcal{E}^{p,q}_{U(1)}(\mathbb{C}^*)$ of $U(1)-$invariant forms. Its complexification $\Theta (\mathcal{E}^{p,q}(\mathbb{R}))\otimes_\mathbb{R} \mathbb{C}$ coincides with $\mathcal{E}^{p,q}_{U(1)}(\mathbb{C}^*).$

Let $g(x) d'x\wedge d''x$ be a tropical K\"{a}hler form. 
One can check that its image $$\Theta(g(x) d'x\wedge d''x)=\frac{i}{4 \pi} g(\log|z|) \frac{ d z\wedge d \overline{z}}{|z|^2}$$ 
is a K\"{a}hler form on $\mathbb{C}^*.$ A K\"{a}hler form $\omega=\frac{i}{2} h(z)d z\wedge  d \overline{z}$
determines the hermitian metric $h=h(z)d z \otimes d \overline{z}$ on $\mathbb{C}^*.$
Thus for the K\"{a}hler form
$\omega=\Theta(g(x) d'x\wedge d''x)$ the corresponding hermitian metric is
$$h=\frac{1}{2 \pi} \frac{g(\log|z|)}{|z|^2}d z \otimes d \overline{z}.$$

Let $*_h$ be the Hodge star operator and $(,)_h$ be the scalar product on $\mathcal{E}^{p,q}(\mathbb{C}^*)$
corresponding to the metric $h.$

\begin{proposition}
  Suppose $\varphi, \psi \in \mathcal{E}^{p,q}(\mathbb{R}),$ then the following relations hold:
$$*_h \Theta = \Theta *_g,$$
$$ (\Theta \varphi, \Theta\psi)_h = (\varphi,\psi)_g,$$
$$\Theta(d''\varphi)=\frac{i}{\sqrt{\pi}} \overline{\partial} \Theta(\varphi),$$
$$\Theta(d'\varphi)=\frac{1}{\sqrt{\pi}} \partial \Theta(\varphi).$$
Let $\omega \in \mathcal{E}^{1,1}(\mathbb{R}),$ then
$$\int_\mathbb{R} \omega = \int_{\mathbb{C}^*} \Theta(\omega).$$
The tropical integral $\int_I \omega$ over an interval $I=(a,b)$  is equal to 
the integral of $\int_U \Theta(\omega)$ over the annulus $U=\{z\in \mathbb{C}^*: e^a<|z|<e^b\}.$
\end{proposition}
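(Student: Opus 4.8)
The plan is to verify each asserted identity by direct computation on the algebra generators, exploiting that $\Theta$ is an $\mathbb{R}$-algebra homomorphism and that everything is governed by the single substitution $x=\log|z|$. First I would fix logarithmic--polar coordinates $z=e^{x+i\theta}$, so that $\log|z|=x$, $\tfrac{dz}{z}=dx+i\,d\theta$ and $\tfrac{d\overline z}{\overline z}=dx-i\,d\theta$. In these coordinates the metric becomes conformal, $h=\tfrac{g(x)}{2\pi}(dx^2+d\theta^2)$, so $(x,\theta)$ are isothermal and the K\"ahler form is $\omega=\Theta(g\,d'x\wedge d''x)=\tfrac{1}{2\pi}g(x)\,dx\wedge d\theta$. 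Recording $\Theta(d'x)$, $\Theta(d''x)$ and $\omega$ in this chart reduces every remaining identity to elementary one-variable calculus.

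For the differential relations, since $\Theta$ is a homomorphism and $d'',\overline\partial$ (resp.\ $d',\partial$) are antiderivations, it suffices to check $\Theta d''=\tfrac{i}{\sqrt\pi}\overline\partial\Theta$ on generators. The only nontrivial case is a function: from $\tfrac{\partial}{\partial\overline z}\log|z|=\tfrac{1}{2\overline z}$ one gets $\overline\partial\bigl(\varphi(\log|z|)\bigr)=\tfrac12\varphi'(\log|z|)\tfrac{d\overline z}{\overline z}$, and comparing with $\Theta(d''\varphi)=\varphi'(\log|z|)\tfrac{i}{2\sqrt\pi}\tfrac{d\overline z}{\overline z}$ produces exactly the constant $\tfrac{i}{\sqrt\pi}$. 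On the generators $d'x,d''x$ both sides vanish, because $\tfrac{dz}{z},\tfrac{d\overline z}{\overline z}$ are closed (equivalently, a $(0,1)$-form on a curve is automatically $\overline\partial$-closed); the Leibniz rule then assembles the general case, and the $d'$/$\partial$ identity follows identically.

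For the integral identities, the coordinate computation $\tfrac{i}{4\pi}\tfrac{dz\wedge d\overline z}{|z|^2}=\tfrac{1}{2\pi}dx\wedge d\theta$ gives $\Theta(f\,d'x\wedge d''x)=\tfrac{1}{2\pi}f(x)\,dx\wedge d\theta$. Integrating out $\theta\in[0,2\pi]$ contributes a factor $2\pi$ cancelling the $\tfrac{1}{2\pi}$, so $\int_{\mathbb{C}^*}\Theta(\omega)=\int_{\mathbb{R}}f(x)\,dx=\int_{\mathbb{R}}\omega$; restricting $x$ to $(a,b)$, i.e.\ $z$ to the annulus $e^a<|z|<e^b$, yields the interval statement verbatim. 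The Hodge-star identity $*_h\Theta=\Theta *_g$ is checked term by term against \eqref{eq.hodge}: on functions $*_h u=u\,\omega$ reproduces $\Theta(*_g f)$, on $(1,1)$-forms $*_h(u\,\omega)=u$ reproduces $\Theta(*_g(f\,d'x\wedge d''x))$, and on $1$-forms the conformal invariance of the Hodge star on a Riemann surface gives $*_h\bigl(\tfrac{dz}{z}\bigr)=i\tfrac{d\overline z}{\overline z}$ and $*_h\bigl(\tfrac{d\overline z}{\overline z}\bigr)=-i\tfrac{dz}{z}$, matching $\Theta(*_g(f\,d'x))=\Theta(f\,d''x)$ and $\Theta(*_g(f\,d''x))=-\Theta(f\,d'x)$. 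The scalar-product identity then follows formally from $(\Theta\varphi,\Theta\psi)_h=\int_{\mathbb{C}^*}\Theta\varphi\wedge *_h\Theta\psi=\int_{\mathbb{C}^*}\Theta(\varphi\wedge *_g\psi)=\int_{\mathbb{R}}\varphi\wedge *_g\psi=(\varphi,\psi)_g$, combining the Hodge-star identity, multiplicativity of $\Theta$, and the integral identity.

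The step I expect to be the main obstacle is the bookkeeping of complex conventions in the Hodge-star identity, specifically the factors of $i$. Because $\Theta(d''x)$ already carries a factor $i$, the operator $*_h$ for which $*_h\Theta=\Theta *_g$ holds cleanly is \emph{not} the naive $\mathbb{R}$-linear Hodge star but the conjugate-linear one defined, in analogy with the tropical case, by $\int_{\mathbb{C}^*}\varphi\wedge *_h\psi=(\varphi,\psi)_h$ for the Hermitian inner product. Indeed the plain real star gives $*\bigl(\tfrac{dz}{z}\bigr)=-i\tfrac{dz}{z}$, and it is the conjugation of the coefficient, $*_h=\overline{*(\,\cdot\,)}$, that converts this into the asserted $i\tfrac{d\overline z}{\overline z}$; one checks $*_h\!*_h=(-1)^{p+q}$ on the image, matching $*_g\!*_g$. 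Aligning these conventions, so that the conjugation implicit in the Hermitian product on $\mathbb{C}^*$ is consistent with the real bilinear tropical product, is where care is needed; once this is fixed, every assertion reduces to the one-variable computations above.
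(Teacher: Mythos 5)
Your proposal is correct and takes essentially the same approach as the paper, which simply states that the proof is a straightforward computation on generators in the coordinate $x=\log|z|$; your calculations (the factor $\tfrac{i}{\sqrt\pi}$ from $\partial_{\overline z}\log|z|=\tfrac{1}{2\overline z}$, the cancellation of $2\pi$ in the fiber integral, and the term-by-term check of the star against \eqref{eq.hodge}) all check out. Your closing observation is also accurate and worth keeping: since the $\mathbb{C}$-linear star preserves the $(1,0)$-type on a curve, the identity $*_h\Theta=\Theta *_g$ can only hold for the conjugate-linear Hermitian star $\psi\mapsto\overline{*\psi}$, a convention the paper leaves implicit.
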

The proof is straightforward computations.

Thus tropical superform can be reinterpreted as an $\mathbb{R}-$subalgebra of $U(1)-$invariant forms of the algebra $\mathcal{E}^{p,q}(\mathbb{C}^*).$

\begin{ex}\label{ex.FS}

Let us consider the Fubini-Study metric and its K\"{a}hler form $$\omega=\frac{i}{2\pi} \partial \overline{\partial} \log(1+|z|^2)=\frac{i}{2\pi} \frac{d z \wedge d \overline{z}}{(1+|z|^2)^2}$$
on $\mathbb{C}^*\subset \mathbb{CP}^1.$

There is the tropical form $$\omega'=2\frac{e^{2x}}{(1+e^{2x})^2} d'x \wedge d'' x$$ such that $\Theta(\omega')=\omega.$
Moreover, $\omega' = \frac{1}{2} d' d''\log(1+e^{2x})$ thus  
$$\omega=\Theta(\frac{1}{2} d' d''\log(1+e^{2x}))=\frac{1}{2} \frac{1}{\sqrt{\pi}} \partial \frac{i}{\sqrt{\pi}}\overline{\partial} \log(1+|z|^2).$$
Since $\omega'$ satisfies all condition of Definition \ref{def.kh}, we can consider $\omega'$ as a K\"{a}hler from on the tropical projective space $\mathbb{TP}^1=[-\infty,+\infty].$
\end{ex}

\begin{remark}
The tropical form $\omega'$ from the example above is not a regular tropical form according to our definition of regularity since the
regularity at infinity condition does not hold. On the other hand, the form $\Theta \omega'$ can be extended to a smooth form on $\mathbb{CP}^1.$
Let us also notice that the coefficient $\frac{e^{2x}}{(1+e^{2x})^2}$ of the form $\omega'$ is a rapidly decreasing function on $\mathbb{R}$ in the sense of Schwartz space. 

Moreover, since the coefficients of any K\"{a}hler form $g$ a curve $\Gamma$ are everywhere positive, a K\"{a}hler form $g$ fails to be regular at infinity if there are infinite length edges on $\Gamma$. 
On the other hand, its coefficients should decrease fast enough near infinity since we require the convergence of the integral $\int_\Gamma g$.

Also, notice that the map $$\log|z|:\mathbb{C}^*\rightarrow \mathbb{R}$$
can be extend to the map $$\log|z|:\mathbb{C}\rightarrow \mathbb{R}\cup \{-\infty\}$$

These observations lead us to the idea to extend the notion of regularity at infinity as follows. We may call a tropical form $\varphi\in \mathcal{E}^{p,q}(\mathbb{R})$ regular at infinity if $\Theta \varphi \in \mathcal{E}^{p,q}(\mathbb{C}^*)$ can be extended to a smooth form over whole $\mathbb{C}.$ This extension seems to be consistent but we do not develop this idea further in this paper.

\end{remark}

\section{$L^2$-theory, Laplace-Beltrami operator and harmonic form}
In this section we prove the main statements of the paper. We introduce the notions of tropical superforms with $L^2$-coefficients,
weak $d''-$differential, the Laplace-Beltrami operator, and harmonic tropical superforms. Main methods of this parts are in style of $L^2-$cohomology theory: functional analysis, unbounded differential operators, distributions, Sobolev spaces, various analytical estimations. Work with the infinite length edges require to use some tedious analysis.

\subsection{Tropical superforms with $L^2$-coefficients and weak $d''-$differential.}
Let $\Gamma$ be a tropical curve with a  K\"{a}hler from $g.$
Let us denote by $\mathcal{L}^{p,q}(\Gamma)$ the Hilbert space of $(p,q)-$form on $\Gamma$ with $L^2$-coefficients with the scalar product $(\cdot,\cdot)_g$ defined in the subsection \ref{ss.inner_prod}. This space is the metric completion of $\widetilde{\mathcal{E}}^{p,q}(\Gamma).$
Obviously, the space of regular form $\mathcal{E}^{p,q}(\Gamma)$ is a subspace of $\mathcal{L}^{p,q}(\Gamma),$ and $\mathcal{L}^{p,q}(\Gamma)$ is 
also the metric completion of $\mathcal{E}^{p,q}(\Gamma).$

There is a continuous linear extension of the Hodge star operator $*$ from $\widetilde{\mathcal{E}}^{p,q}(\Gamma)$ to $\mathcal{L}^{p,q}(\Gamma)$ which we also denote by $*.$ The Hodge star operator is an isometry between  $\mathcal{L}^{p,q}(\Gamma)$ and  $\mathcal{L}^{1-p,1-q}(\Gamma).$

\begin{definition}
A form $\omega\in\mathcal{L}^{p,1}(\Gamma)$ is called the \emph{weak $d''-$differential} of a form 
$\psi\in\mathcal{L}^{p,0}(\Gamma)$ if for any regular form $\varphi\in \mathcal{E}^{1-p,0}(\Gamma)$
holds 
\begin{equation}\label{eq.int_by_parts}
 \int_\Gamma \omega\wedge \varphi = (-1)^{p+1}\int_\Gamma \psi\wedge d'' \varphi.
\end{equation}
We denote the weak $d''-$differential of a form $\psi$ by $d''\psi.$ 
\end{definition}
Obviously, the $d''-$differential
of a regular form is also the weak $d''-$differential. 

The weak $d''-$differential of a from $\psi$ is unique if it exists.
Indeed, suppose there is two such differentials $\omega_1$ and $\omega_2.$
Then using (\ref{eq.int_by_parts}) we obtain
 $$\int_\Gamma (\omega_1-\omega_2) \wedge \varphi = (-1)^{p+1}\int_\Gamma (\psi-\psi)\wedge d'' \varphi=0$$.
 $$\int_\Gamma (\omega_1-\omega_2) \wedge \varphi=\pm \int_\Gamma ** (\omega_1-\omega_2) \wedge \varphi = \pm(*(\omega_1-\omega_2),\varphi)=0$$
 Since it holds for any $\varphi\in \mathcal{E}^{1-p,0}(\Gamma)$ and $\mathcal{E}^{1-p,0}(\Gamma)$ is dense in $\mathcal{L}^{1-p,0}(\Gamma),$ we get the equality $\omega_1=\omega_2.$

Thus, there is the densely defined unbounded operator 
$$d'':\mathcal{L}^{p,0}(\Gamma)\rightarrow \mathcal{L}^{p,1}(\Gamma).$$
We denote its domain by $\mathcal{D}(d'')$ or by $\mathcal{D}^{p,0}(\Gamma).$

In the sequel we denote the domain of an unbounded operator $A$ by $\mathcal{D}(A).$ 

\subsection{Presheaves $\mathcal{L}^{p,q}$ and $\mathcal{D}^{p,0}$.}
\begin{definition}
Restrictions of $\mathcal{L}^{p,q}(\Gamma)$ to an open subsets of $\Gamma$ defines the presheaf $\mathcal{L}^{p,q}$ of $(p,q)-$superform with $L^2$-coefficients on $\Gamma.$ Let us define the subpresheaf $\mathcal{D}^{p,0}$ of the presheaf $\mathcal{L}^{p,0}.$ For an open subset $U\subset \Gamma$ the $(p,0)-$form $ \psi\in \mathcal{L}^{p,0}(U)$ belongs to $\mathcal{D}^{p,0}(U)$ if there is $\omega \in \mathcal{L}^{p,1}(U)$ such that for any $\varphi\in \mathcal{E}^{1-p,0}(\Gamma)$ with a compact support on $U$
holds the equation (\ref{eq.int_by_parts}).
\end{definition}
\begin{ex}
 In the definition above the from  $\varphi$ has a compact support in the open set $U.$
 Let us clarify the structure of topology on the infinite edges and what is considered to be a compact support in that case.
 Consider the tropical projective space $\mathbb{TP}^1= [-\infty,+\infty].$ Then, for example, $U = [-\infty,\infty)$ is an open subset of $\mathbb{TP}^1,$ the set $[-\infty,a],a\in \mathbb{R}$ is a compact subset of $U$ and
 the set  $[a,+\infty),a\in \mathbb{R}$ is not compact.
\end{ex}

\begin{remark}
We should warn that the sheafification of  $\mathcal{L}^{p,q}$ is the sheaf $\mathcal{L}_{loc}^{p,q}$ of $(p,q)-$superform with 
locally $L^2$-coefficients on $\Gamma,$ i.e., sections of $\mathcal{L}_{loc}^{p,q}(U)$ over an open set $U$ are $(p,q)-$superform such that their coefficients are $L^2$-integrable functions over every compact set of $U.$ Since $\Gamma$ is compact, 
we have $\mathcal{L}_{loc}^{p,q}(\Gamma)=\mathcal{L}^{p,q}(\Gamma).$
To avoid complications related to the locally $L^2$-coefficients we will work with the presheaf $\mathcal{L}^{p,q}.$ 

In the other hand, $\mathcal{L}^{p,q}$ and $\mathcal{D}^{p,0}$ are almost sheaves, to be sheaves they have to satisfy  Locality and Gluing axioms. Let us recall these axioms for a sheaf $\mathcal{F}.$

(Locality) Suppose $U$ is an open set, $\{U_{i}\}_{i\in I}$ is an open cover of $U$, and $s,t\in \mathcal{F}(U)$ are sections. If $s|_{U_{i}}=t|_{U_{i}}$ for all $i\in I$, then $s=t$.

(Gluing) Suppose $U$ is an open set, $\{U_{i}\}_{i\in I}$ is an open cover of $U$, and $\{s_{i}\in \mathcal{F}(U_{i})\}_{i\in I}$ is a family of sections. If all pairs of sections agree on the overlap of their domains, that is, if $s_{i}|_{U_{i}\cap U_{j}}=s_{j}|_{U_{i}\cap U_{j}}$ for all $ i,j\in I$, then there exists a section $ s\in \mathcal{F}(U)$ such that $ s|_{U_{i}}=s_{i}$ for all $i\in I.$

Presheaves $\mathcal{L}^{p,q}, \mathcal{D}^{p,0}$ satisfy Locality axiom for any open cover and Gluing axiom only for finite covers.
Indeed, if $\{U_{i}\}_{i\in I}$ is an infinite cover of $U,$ then it may happen that the norms of restriction to each
$U_{i}$ are finite but the norm of the element on $U$ is infinite, therefore it does not belong to $\mathcal{L}^{p,q}(U).$

It is possible to glue sections of $\mathcal{D}^{p,0},$ because there is partition of unity in the space of regular tropical $(0,0)-$forms.  Using that partition of unity one can check that the equation (\ref{eq.int_by_parts}) holds for the glued section.  

Indeed, suppose $U$ is an open set, $\{U_{i}\}_{i\in I}$ is an open finite cover of $U$, and $\{\psi_{i}\in \mathcal{D}^{p,0}(U_{i})\}_{i\in I}$ is a family of sections such that $\psi_{i}|_{U_{i}\cap U_{j}}=\psi_{j}|_{U_{i}\cap U_{j}}$ for all $ i,j\in I$. Then there is a section $\psi\in \mathcal{L}^{p,0}(U)$ such that $\psi|_{U_{j}}=\psi_{j}.$
Since forms $\omega_i=d'' \psi_{j}\in \mathcal{L}^{p,1}(U_i)$ agree on the overlaps of their domains, we can glue them to the global form $\omega\in \mathcal{L}^{p,1}(U).$ Let us show that  $\psi$ is an element of $\mathcal{D}^{p,0}(U),$
that is, for any $\varphi\in \mathcal{E}^{1-p,0}(\Gamma)$ with a compact support on $U$ holds:
$$ \int_U\omega\wedge \varphi = (-1)^{p+1}\int_U \psi\wedge d'' \varphi.$$
Let $\rho_i\in \mathcal{E}^{0,0}(\Gamma) ,i\in I$ be a partition of unity subordinate to the open cover $\{U_{i}\}_{i\in I}.$
Then $$\int_U\omega\wedge \varphi = \sum_{i\in I} \int_{U_i} \omega\wedge \rho_i \varphi=$$
 because $\rho_i \varphi$ has a compact support on $U_i,$ we get
$$=(-1)^{p+1}  \sum_{i\in I}  \int_{U_i} \psi\wedge d''( \rho_i \varphi) = (-1)^{p+1} \int_U \psi\wedge d'' \varphi.$$
\end{remark}

\subsection{The main technical lemma.}
\begin{lemma}\label{lm.main}
Let $U\cong [-\infty,a)$ be an open neighborhood of a degree $1$ vertex of $\Gamma,$ the vertex is identified with the point $-\infty.$ Given a form $\omega \in\mathcal{L}^{p,1}(U).$ 
\begin{enumerate}
 
 \item
If $p=0$ and $\omega=\omega(x) d''x,$ let us define a $(0,0)-$form $\psi=\psi(x)$, i.e., a function as follows:
$$\psi(x)= - \int^a_x \omega(t) dt.$$
The function $\psi$ is well-defined and belongs to $\mathcal{L}^{0,0}(U),$ and the following estimates holds:
$$|\psi(x)|\leq \sqrt{a-x} ||\omega(x) d''x||.$$
 \item
If $p=1$ and  $\omega=\omega(x) d'x \wedge d''x,$ let us define a $(1,0)-$form $\psi=\psi(x) d'x$ as follows:
$$\psi(x)= - \int^x_{-\infty} \omega(t) dt.$$
The form $\psi$ is well-defined and belongs to $\mathcal{L}^{0,1}(U),$ and the following estimates holds:
$$|\psi(x)|\leq\sqrt{\int^x_{-\infty} g(t) d t} ||\omega(x) d'x \wedge d''x||.$$
\item
The map $\omega \rightarrow \psi$ is a bounded linear operator from $\mathcal{L}^{p,1}(U)$ to $\mathcal{D}^{p,0}(U)$ and
 $d'' \psi = \omega.$
Let us denote this operator $T_U.$
\item
Suppose there is a form $\widetilde{\psi}\in\mathcal{D}^{p,1}(U)$ such that $d'' \widetilde{\psi} = \omega.$ 
Then, if $p=1,$  $\psi = \widetilde{\psi},$ and, if $p=0,$ $\psi = C + \widetilde{\psi}, C\in\mathbb{R}.$
\end{enumerate}
\end{lemma}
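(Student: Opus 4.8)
The plan is to regard $\psi$ as the image of $\omega$ under a primitivization (integration) operator and to deduce everything from two Cauchy--Schwarz estimates combined with the convergence conditions built into Definition \ref{def.kh}. The two integration limits are deliberately asymmetric: for $p=0$ we integrate over the finite interval $[x,a]$, which is automatically harmless, whereas for $p=1$ we integrate from $-\infty$, where convergence is available only because $\int_U g<+\infty$. All the genuine difficulty is concentrated at the infinite end $-\infty$, and the choice of limits is exactly what makes $\psi$ decay at the right end for both the $L^2$ estimate and the vanishing of boundary terms.

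For parts (1) and (2) I would first record the coordinate form of the relevant norms from Subsection \ref{ss.inner_prod}: for $p=0$ one has $\|\omega(x)\,d''x\|^2=\int_U \omega^2\,dx$, while for $p=1$ one has $\|\omega(x)\,d'x\wedge d''x\|^2=\int_U g^{-1}\omega^2\,dx$. The pointwise bounds are then immediate. For $p=0$, Cauchy--Schwarz on $[x,a]$ gives $|\psi(x)|\le\sqrt{a-x}\,\|\omega\,d''x\|$. For $p=1$, writing the integrand as $g^{-1/2}\omega\cdot g^{1/2}$ and applying Cauchy--Schwarz on $(-\infty,x]$ gives $|\psi(x)|\le\big(\int_{-\infty}^x g\big)^{1/2}\|\omega\,d'x\wedge d''x\|$, which simultaneously shows that the defining integral converges at $-\infty$ since $\int_{-\infty}^x g\le\int_U g<+\infty$. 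To prove $\psi\in\mathcal{L}^{p,0}(U)$ I would square these bounds, integrate against the correct weight, and reduce by Fubini to a single integral: for $p=0$, $\|\psi\|^2\le\|\omega\|^2\int_U(a-x)g\,dx$, and for $p=1$, $\int_U\psi^2\,dx\le\|\omega\|^2\int_U\big(\int_{-\infty}^x g\,dt\big)\,dx=\|\omega\|^2\int_U(a-x)g\,dx$ after switching the order of integration. Finiteness of $\int_U(a-x)g\,dx$ is precisely where the K\"ahler hypotheses enter: $\int_U g<+\infty$ controls the constant term and $\int_U x^2 g<+\infty$ together with Cauchy--Schwarz controls the linear term $\int_U x\,g$.

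Part (3) then follows almost formally. Linearity is clear from the integral formula, and the common bound $\|\psi\|^2\le C\|\omega\|^2$ with $C=\int_U(a-x)g\,dx$ obtained above gives boundedness of $T_U$. To identify $d''\psi=\omega$ as a weak differential I would test against an arbitrary regular $\varphi\in\mathcal{E}^{1-p,0}(\Gamma)$ with compact support in $U$ and integrate by parts, using that $\psi$ is absolutely continuous with $\psi'=\omega$ (case $p=0$) or $\psi'=-\omega$ (case $p=1$) almost everywhere. The boundary terms vanish for complementary reasons: compact support kills the endpoint near $a$, while at $-\infty$ either the test form itself vanishes there (a regular $(1,0)$-form is identically zero near the degree-one vertex, case $p=0$) or $\psi(x)\to0$ as $x\to-\infty$ (case $p=1$). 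This reproduces exactly equation (\ref{eq.int_by_parts}), so $\psi\in\mathcal{D}^{p,0}(U)$ with $d''\psi=\omega$.

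For part (4), set $\eta=\psi-\widetilde{\psi}$, so that $d''\eta=0$ weakly, i.e. $\int_U\eta\wedge d''\varphi=0$ for all admissible $\varphi$. Testing against forms whose coefficient-derivative has vanishing integral (the du Bois-Reymond lemma) forces $\eta$ to have a locally constant coefficient. The two cases then diverge exactly through $L^2$-membership: for $p=0$ a constant function $C$ lies in $\mathcal{L}^{0,0}(U)$ because $\int_U g<+\infty$, so one can conclude only $\psi=\widetilde{\psi}+C$; for $p=1$ a form $c\,d'x$ with constant coefficient has $\|c\,d'x\|^2=c^2\int_U dx=+\infty$ on the infinite-length edge unless $c=0$, forcing $\psi=\widetilde{\psi}$. (Equivalently, for $p=1$ one observes that regular $(0,0)$ test forms may be \emph{nonzero} constants near $-\infty$, so their derivatives exhaust the compactly supported smooth functions and $\eta=0$ follows directly.) I expect the main obstacle to be the analysis at the infinite end in parts (1)--(2): ensuring that $\psi$ is genuinely square-integrable against the weight $g$, which is the one place where the second-moment condition $\int_e x^2 g<+\infty$ is indispensable, and correspondingly ruling out any surviving boundary contribution at $-\infty$ in the integration by parts of part (3).
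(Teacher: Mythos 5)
Your proposal is correct and follows essentially the same route as the paper's proof: the same asymmetric primitives, the same Cauchy--Schwarz pointwise bounds, the same Fubini reduction of the $L^2$ estimate to the finiteness of $\int_U (a-x)g\,dx$ (which uses both K\"ahler conditions of Definition \ref{def.kh}), and the same uniqueness argument via constancy of the coefficient together with the norm dichotomy between $c$ and $c\,d'x$. The only cosmetic differences are that the paper verifies $d''\psi=\omega$ by switching the order of integration rather than by integration by parts with vanishing boundary terms, and that it asserts, rather than derives via the du Bois--Reymond lemma, the constancy of a weakly $d''$-closed coefficient in part (4).
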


\begin{remark}
In other words, this lemma says that starting from a form $\omega\in \mathcal{L}^{p,1}(U)$ we can find a form 
$\psi \in \mathcal{L}^{p,1}(U)$ such that $d'' \psi = \omega.$
The value and the norm of $\psi$ can be estimated using the norm of $\omega.$ Also, the form $\psi$ is a unique form such that $d'' \psi = \omega$ if $p=1,$ 
or unique up to addition of a constant if $p=0.$
\end{remark}

\begin{proof}
Let us consider the case of $(1,1)-$form. 
Let $I_{\leq x}$ be the indicator function of the set $[-\infty,x].$
Given a form $\omega = \omega(x) d'x\wedge d''x\in \mathcal{L}^{1,1}(U).$
Then the $(0,1)-$form $\psi=\psi(x) d'x$ is defined as follows:  
$$\psi(x)=- \int_U I_{\leq x}(t) \omega(t) dt.$$ 
This integral is well-defined, indeed, it can be written using the scalar product on the space 
$\mathcal{L}^{1,1}(U):$ 
$$ \int_U I_{\leq x}(t) \omega(t) dt = - (\omega, I_{\leq x} g),$$ 
and the form $I_{\leq x} g$ is an element of  $\mathcal{L}^{1,1}(U).$

We have to show that $\psi \in \mathcal{L}^{1,0}(U).$
In particular, that $$||\psi||^2=\int_{U} \psi^2(x) dx < +\infty.$$
Recall that $$||\omega||^2=\int_U \frac{1}{g(x)} \omega^2(x) dx$$
Using the Cauchy-Schwarz inequality we get
$$|\psi(x)|=\bigg|\int_U I_{\leq x}(t) \omega(t) d t\bigg|=\bigg|\int_U \sqrt{g(t)} I_{\leq x}(t)  \frac{1}{\sqrt{g(t)}} \omega(t) \psi\bigg|\leq ||\omega|| \sqrt{\int_U g(t) I_{\leq x}(t) dt }  .$$
So we obtained the estimation:
$$|\psi(x)|\leq ||\omega||  \sqrt{\int^x_{-\infty} g(t) dt }.$$
Then
$$||\psi||^2=\int^a_{-\infty} \psi^2(x) dx \leq 
||\omega||^2 \int^a_{-\infty} dx (\int^x_{-\infty} g(t) dt)=$$
changing the order of integration we get
$$=||\omega||^2  \int^a_{-\infty} g(t) dt (\int^a_{t} dx )=||\omega||^2 \int^a_{-\infty} (a-t) g(t) dt$$
By the definition of the K\"{a}hler metric (Definition \ref{def.kh}) the integral $\int^a_{-\infty} (a-t) g(t) dt$ converges. 
Thus $$||\psi||\leq C ||\omega||$$
and the constant $C$ does not depend on the choice of $\omega.$ So there is a bounded linear operator $$T_U:\mathcal{L}^{1,1}(U) \rightarrow \mathcal{D}^{1,0}(U)$$
such that 
$T_U \omega = \psi.$

Let us check that $d''\psi$ is equal to $\omega.$
It means that for any for regular $(0,0)-$form $\varphi$ with the compact support in $U$ holds:
$$\int_U\omega\wedge \varphi = \int_U \psi\wedge d'' \varphi.$$
Consider the right hand side of the equality
$$\int_U \psi\wedge d'' \varphi = \int_U \psi(x) \varphi'(x) dx = \int^a_{-\infty} (-\int^x_{-\infty}\omega(t) dt) \varphi'(x) dx =$$
changing the order of integration we get
$$=\int^a_{-\infty} (-\int^a_{t}\varphi'(x) d x) \omega(t) dt=$$
by the Newton-Leibniz formula, since $\varphi(x)$ is equal to zero in a neighborhood of $a,$ we obtain
$$=\int^a_{-\infty} \varphi(t) \omega(t) dt = \int_U\omega\wedge \varphi.$$
Thus, $\psi$ belongs to $\mathcal{D}^{1,0}(U)$ and $d''\psi = \omega.$

Let us consider the case of $(0,1)-$forms. This case is quite similar to the previous one, but there are some minor differences.
Given a form $$\omega = \omega(x)  d''x\in \mathcal{L}^{0,1}(U).$$
Let $I_{ x\leq}$ be the indicator function of the set $[x,a).$
Consider the function $$\psi(x)=-\int_U I_{x\leq}(t) \omega(t) dt.$$
This integral is well-define since it is equal to the scalar product of two elements in  $\mathcal{L}^{0,1}(U):$
$$\psi(x)=-(\omega,I_{x\leq}(t) d''x).$$

We have to show that $\psi \in \mathcal{L}^{0,0}(U),$ thus we need to check that
$$||\psi||^2=\int_{U} g(x) \psi^2(x) dx < +\infty.$$
Since $$||\omega||^2=\int_U  \omega^2(x) dx$$
using the Cauchy-Schwarz inequality we get
$$|\psi(x)|=\bigg| \int_U I_{x\leq}(t) \omega(t) dt \bigg|\leq ||\omega|| \sqrt{\int_U I_{x\leq}(t) dt} = ||\omega|| \sqrt{a-x}.$$
Then
$$||\psi||^2=\int_{U} g(x) \psi^2(x) dx \leq ||\omega|| \int^a_{-\infty} (a-x) g(x) dx$$ 
By the definition of K\"{a}hler metric (Definition \ref{def.kh}) the integral $\int^a_{-\infty} (a-x) g(x) dx$ converges.
Thus $$||\psi||\leq C ||\omega||$$
and the constant does not depends on the choice of $\omega.$ So there is a bounded linear operator $$T_U:\mathcal{L}^{0,1}(U) \rightarrow \mathcal{D}^{0,0}(U)$$
such that 
$T_U \omega = \psi.$

Let us check that $d''\psi$ is equal to $\omega.$
It means that for any regular $(1,0)-$form $\varphi= \varphi(x) d'x$ with the compact support in $U$ holds:
$$\int_U\omega\wedge \varphi = - \int_U \psi\wedge d'' \varphi.$$
Consider the right hand side of the equality: 
$$-\int_U \psi\wedge d'' \varphi = -\int^a_{-\infty} (- \int^a_x  \omega(t) d t) (- \varphi'(x)) d x = $$
changing the order of integration we obtain
$$ = - \int^a_{-\infty} ( \int^t_{-\infty} \varphi'(x) d x ) \omega(t)  d t =$$
Since $\varphi$ is regular it equal to zero in a neighborhood of $-\infty.$ Thus, applying the Newton-Leibniz formula we obtain
$$=- \int^a_{-\infty} \varphi(t)  \omega(t)  d t= \int_U\omega\wedge \varphi.$$
Thus, $\psi$ belongs to $\mathcal{D}^{0,0}(U).$

Suppose there is a form $\widetilde{\psi}\in\mathcal{D}^{p,0}(U)$ such that $d'' \widetilde{\psi} = \omega.$ 
Then $d''(\widetilde{\psi}-\psi)=0,$ hence the coefficient of $\widetilde{\psi}-\psi$ should be constant, i.e.,
$\widetilde{\psi}-\psi$ is equal, if $p=1,$ to $c d'' x,$ or, if $p=0,$ to $c$ where $c\in \mathbb{R}.$
In the case $p=1,$
$$||c d''x||^2=\int^a_{-\infty} c^2 d x = +\infty,$$
hence $c=0$ and $\widetilde{\psi}=\psi.$
In the case $p=0,$ the integral
$$||c||^2=\int^a_{-\infty}c^2 g(x) dx $$
converges by the definition of the  K\"{a}hler form (Definition \ref{def.kh}) and $c$ can be any real number.
 
\end{proof}

\subsection{Relation to the Sobolev space.}
\begin{lemma}\label{lm.H1}
Suppose $U$ is an open subsets of $\Gamma$ and it is isometric to an open interval of the finite length $U\cong (a,b).$
Given a form $\psi\in\mathcal{D}^{p,0}(U),$ in terms of coordinates it is equal either to $\psi = \psi(x) d'x$ or to 
$\psi =\psi(x).$ 
Then the coefficient $\psi(x)$ belongs to the Sobolev space $H^1(U)=H^1(a,b).$
\end{lemma}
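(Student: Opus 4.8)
The plan is to reduce the membership $\psi(x)\in H^1(a,b)$ to the two defining requirements of that Sobolev space: that $\psi(x)\in L^2(a,b)$, and that its distributional derivative also lies in $L^2(a,b)$; and then to identify that derivative with the coefficient of $\omega=d''\psi$.

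First I would dispose of the weight $g$. Since $U\cong(a,b)$ has finite length, its closure in $\Gamma$ meets only finitely many edges and contains no degree-one vertex, so the K\"{a}hler coefficient $g(x)$ is continuous and strictly positive on a compact set; hence $0<m\le g(x)\le M<\infty$ there. Therefore the weighted norms defining $\mathcal{L}^{0,0}(U)$ and $\mathcal{L}^{1,1}(U)$ are equivalent to the plain Lebesgue $L^2$-norm of the coefficient. From $\psi\in\mathcal{L}^{p,0}(U)$ this yields $\psi(x)\in L^2(a,b)$ in both the $p=0$ and the $p=1$ cases, and from $\omega=d''\psi\in\mathcal{L}^{p,1}(U)$ it yields that the coefficient of $\omega$ belongs to $L^2(a,b)$.

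Next I would unwind the defining identity (\ref{eq.int_by_parts}) in the coordinate $x$. For a test form $\varphi\in\mathcal{E}^{1-p,0}(\Gamma)$ supported in $U$, a direct computation with the wedge product and the sign conventions of Section~2 rewrites (\ref{eq.int_by_parts}) as
\[
\int_a^b \psi(x)\,\varphi'(x)\,dx \;=\; \sigma\int_a^b \omega(x)\,\varphi(x)\,dx,
\]
where $\sigma=+1$ when $p=0$ (with $\omega=\omega(x)\,d''x$) and $\sigma=-1$ when $p=1$ (with $\omega=\omega(x)\,d'x\wedge d''x$). The point is then that, as $\varphi$ ranges over regular $(1-p,0)$-forms compactly supported in $U$, its coefficient ranges over a class containing $C_c^\infty(a,b)$: a smooth function compactly supported inside $(a,b)$ is continuous and, having support in the interior, either satisfies Kirchhoff's law vacuously or, if its support meets a degree-two vertex, satisfies it because both continuity and Kirchhoff's law there reduce, in the global coordinate $x$, to ordinary continuity of the coefficient. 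Thus the displayed identity is precisely the assertion that $\psi$ has a weak derivative on $(a,b)$ equal to $\sigma\,\omega(x)$, which lies in $L^2(a,b)$ by the first step; hence $\psi\in H^1(a,b)$.

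I expect the only genuine work to be this last identification, namely checking that the class of admissible test forms is rich enough to detect the ordinary weak derivative; the sign bookkeeping is routine. The role of $g$ must not be overlooked either, since if $U$ reached infinity the weight could degenerate and the norm equivalence in the first step would fail, but the finite-length hypothesis is exactly what excludes this.
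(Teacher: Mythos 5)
Your proposal is correct and follows essentially the same route as the paper's own proof: bound the K\"{a}hler coefficient $g$ above and below by positive constants on the compact closure of $U$ so that the weighted norms are equivalent to the plain $L^2(a,b)$ norms, then rewrite (\ref{eq.int_by_parts}) in the coordinate $x$ against test coefficients ranging over $C_c^\infty(a,b)$, which exhibits $\pm\omega(x)$ as the weak derivative of $\psi(x)$ (the paper writes this out only for $p=0$; your explicit treatment of $p=1$ and of the test-class richness at degree-two vertices is a welcome addition). The one slip is the sign bookkeeping: with the paper's conventions for $d''$ and $\wedge$ the correct values are $\sigma=-1$ for $p=0$ and $\sigma=+1$ for $p=1$ (so the weak derivative is $-\sigma\,\omega$), but this is immaterial to the conclusion, since membership in $H^1(a,b)$ only requires the weak derivative to equal $\pm\omega(x)\in L^2(a,b)$.
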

\begin{proof}
Suppose  $\psi\in\mathcal{D}^{0,0}(U).$ Let $\omega=\omega(x) d''x = d'' \psi.$ Then the norms of these elements are equal: 
$$||\psi||^2=\int^b_a \psi^2(x) g(x) dx,$$
$$||\omega||^2= \int^b_a \omega^2(x) d x.$$
Since $g(x)$ is a nonegative continuous function on the closure of $U,$ there are constants $0<c,C$ such that 
$c<g(x)<C$ for any $x\in U.$ Therefore, the norm on $\mathcal{L}^{0,0}(U)$ is equivalent to the standard norm on 
$L^2(a,b):$ 
$$c \int^b_a \psi^2(x) dx <||\psi||^2< C\int^b_a \psi^2(x) dx.$$
The norm of the $(1,1)-$form $\omega\in\mathcal{L}^{0,1}(U)$ is equal to the standard norm on $L^2(a,b)$ of its coefficient the function $\omega(x)$.

Consider equation (\ref{eq.int_by_parts}):
 $$\int_U \omega\wedge \varphi = -\int_U \psi\wedge d'' \varphi.$$
Since $\psi\in\mathcal{D}^{0,0}(U)$, it holds for any regular from $\varphi= \varphi(x) d'x$ with a compact support on $U.$
Therefore, the coefficient $\varphi(x)\in C^\infty_0(U)$ is a smooth function with a compact support on the interval $U=(a,b).$
In the terms of coefficient the equation looks like:
$$ \int^b_a \omega(x) \varphi(x) = - \int^b_a \psi(x) \varphi'(x),$$
where $\psi(x),\omega(x)\in L^2(a,b),$ and $\varphi(x)\in C^\infty_0(a,b).$
It is exactly the definition of the Sobolev space, hence $\omega(x)$ is the weak derivative of $\psi(x),$ and the function $\psi(x)$ belongs to the Sobolev space $H^1(U).$
\end{proof}
\begin{cor}
Given a form $\psi\in\mathcal{D}^{p,0}(\Gamma)$ and a vertex $v$ of $\Gamma$ of degree $\geq 2.$ 
There are well-defined boundary values of the coefficients of $\psi$ at $v$ along the edges incident to $v.$
\end{cor}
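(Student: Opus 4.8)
The plan is to reduce the statement to Lemma~\ref{lm.H1} together with the one-dimensional Sobolev embedding $H^1\hookrightarrow C^0$. First I would fix a vertex $v$ of degree $d\geq 2$ and, for each edge $e$ incident to $v$, choose $\varepsilon>0$ small enough that the portion of $e$ near $v$ is isometric to the finite half-open interval $(-\varepsilon,0]$, with the point $0$ corresponding to $v$; as in the neighborhood (\ref{eq.nbhd}), this is possible regardless of whether $e$ has finite or infinite length, since by the definition of a tropical curve a vertex of degree $\geq 2$ always sits at the finite end $0$ of the interval modeling $e$.

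Next I would pass to the interior open interval $U_e\cong(-\varepsilon,0)$. Because $\mathcal{D}^{p,0}$ is a presheaf, the restriction $\psi|_{U_e}$ is again a section of $\mathcal{D}^{p,0}(U_e)$, and $U_e$ has finite length, so Lemma~\ref{lm.H1} applies directly and shows that the coefficient $\psi_e(x)$ of $\psi$ on $U_e$ lies in the Sobolev space $H^1(U_e)=H^1(-\varepsilon,0)$, whether $\psi$ has degree $(0,0)$ or $(1,0)$.

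Finally I would invoke the fundamental one-dimensional regularity fact: every element of $H^1$ on a bounded interval admits an absolutely continuous representative that extends continuously to the closed interval $[-\varepsilon,0]$. Consequently $\psi_e$ has a well-defined limit as $x\to 0^-$, and I would define the boundary value of $\psi$ at $v$ along $e$ to be this limit $\psi_e(0):=\lim_{x\to 0^-}\psi_e(x)$. Well-definedness, meaning independence of the auxiliary $\varepsilon$ and of the chosen representative, follows because the continuous representative of an $H^1$ function is unique and restricts compatibly to smaller subintervals. Repeating the argument for every edge incident to $v$ yields the asserted family of boundary values.

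The only substantive ingredient is the Sobolev embedding in dimension one; everything else is bookkeeping, and I expect no genuine obstacle. The single point to handle with care is that the embedding is applied on an interval having $v$ as a \emph{boundary} point rather than an interior point, so that the resulting boundary value is a one-sided limit taken along $e$. This is precisely what the statement requires, since sections of $\mathcal{D}^{p,0}$ need not be continuous across $v$, and in general the one-sided limits obtained from distinct incident edges may differ.
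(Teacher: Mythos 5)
Your proposal is correct and follows essentially the same route as the paper: restrict $\psi$ to a finite open interval in an incident edge whose closure contains $v$, apply Lemma~\ref{lm.H1} to conclude the coefficient lies in $H^1$ of that interval, and then use the one-dimensional Sobolev embedding (trace) to obtain the boundary value at $v$. The extra care you take about uniqueness of the continuous representative and independence of $\varepsilon$ is sound bookkeeping that the paper leaves implicit.
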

\begin{proof}
Let $e$ be an edge of $\Gamma$ incident to $v.$ Let $U$ be a finite open interval such that $U\subset e$ and the closure of $U$ contains $v.$ Then by Lemma \ref{lm.H1} the coefficient of the restriction $\psi|_U$ is a function from the Sobolev space $H^1(U),$
and, therefore, has a well-defined trace at the boundary of $U,$ in particular, at $v.$
\end{proof}

\subsection{$d''$ is a closed.}
\begin{proposition}
The operator $d''$ is a closed operator.
\end{proposition}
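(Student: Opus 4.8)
The plan is to show that the graph of $d''$ is closed: if $\psi_n \to \psi$ in $\mathcal{L}^{p,0}(\Gamma)$ and $d''\psi_n \to \omega$ in $\mathcal{L}^{p,1}(\Gamma)$, with each $\psi_n \in \mathcal{D}(d'')$, then $\psi \in \mathcal{D}(d'')$ and $d''\psi = \omega$. The natural route is to pass to the limit in the defining weak integration-by-parts identity \eqref{eq.int_by_parts}, using the fact that the pairing $\int_\Gamma (\cdot)\wedge(\cdot)$ is continuous on $\mathcal{L}^{*,*}(\Gamma)$. This is where I would lean on the structure already developed: the wedge-pairing can be rewritten via the Hodge star as an honest $L^2$ inner product, $\int_\Gamma \alpha \wedge \beta = \pm(\ast\alpha,\beta)$, and the Hodge star is a bounded (indeed isometric) operator on the $\mathcal{L}$-spaces, so all the relevant bilinear forms are jointly continuous with respect to the $\mathcal{L}^{p,q}(\Gamma)$-norms.

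First I would fix an arbitrary test form $\varphi \in \mathcal{E}^{1-p,0}(\Gamma)$ and write down \eqref{eq.int_by_parts} for each $\psi_n$:
\begin{equation*}
\int_\Gamma (d''\psi_n)\wedge \varphi = (-1)^{p+1}\int_\Gamma \psi_n \wedge d''\varphi.
\end{equation*}
The left-hand side is $\pm(\ast(d''\psi_n),\varphi)$ and the right-hand side is $\pm(\ast\psi_n, d''\varphi)$, each an $L^2$ inner product of an element converging in $\mathcal{L}$ against a fixed element of $\mathcal{L}$; here I use that $d''\varphi$ is again a regular form, hence lies in $\mathcal{L}^{1-p,1}(\Gamma)$, and that $\ast$ is continuous. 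Since $d''\psi_n \to \omega$ and $\psi_n \to \psi$ in their respective spaces, both sides converge, yielding
\begin{equation*}
\int_\Gamma \omega\wedge \varphi = (-1)^{p+1}\int_\Gamma \psi \wedge d''\varphi.
\end{equation*}
As this holds for every $\varphi \in \mathcal{E}^{1-p,0}(\Gamma)$, the definition of the weak $d''$-differential gives exactly $\psi \in \mathcal{D}(d'')$ with $d''\psi = \omega$, which is the assertion that the graph is closed.

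The argument is short precisely because the heavy lifting — existence, uniqueness, and the $L^2$ character of the weak differential — has been packaged earlier. The only point demanding a little care, and the place I would expect the real obstacle, is justifying the passage to the limit on both pairings simultaneously: one must confirm that the bilinear pairing $(\alpha,\beta)\mapsto \int_\Gamma \alpha\wedge\beta$ is genuinely continuous on the full $L^2$-completions and not merely on the dense regular subspaces, so that convergence of $\psi_n$ and of $d''\psi_n$ in norm (rather than just weakly) transfers to convergence of the integrals against the fixed test form. Rewriting each pairing as $\pm(\ast\cdot,\cdot)$ reduces this to the Cauchy–Schwarz estimate together with boundedness of $\ast$, which are already in hand. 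I do not anticipate any difficulty from the infinite-length edges here, since the test form $\varphi$ and its differential are regular and hence decay suitably at infinity, keeping every integral absolutely convergent.
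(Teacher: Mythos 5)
Your proposal is correct and follows essentially the same route as the paper's own proof: fix a regular test form, write the defining identity \eqref{eq.int_by_parts} for each $\psi_n$, rewrite both wedge pairings as $L^2$ inner products via the Hodge star, and pass to the limit using continuity of the inner product. The paper performs the identical limiting argument (with the rewriting done as $(-1)^{p-1}(d''\psi_n,\ast\varphi) $ and $-(\psi_n,\ast d''\varphi)$, which matches your $\pm(\ast\cdot,\cdot)$ formulation up to sign conventions), so there is nothing to add.
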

\begin{proof}
 
An operator is closed if its graph is closed.
Suppose $\omega_n\in \mathcal{D}^{p,0}(d''),$ $\omega_n\rightarrow \omega$ in $\mathcal{L}^{p,0}(\Gamma)$ and 
$d'' \omega_n \rightarrow \psi$ in $\mathcal{L}^{p,1}(\Gamma).$ Then the relation 
$$\int_\Gamma d'' \omega_n\wedge \varphi = (-1)^{p+1}\int_\Gamma \omega_n\wedge d'' \varphi$$
holds for any $\varphi\in \mathcal{E}^{1-p,0}(\Gamma).$

Since $** = (-1)^{p+q}\Id$ on the space of $(p,q)-$forms
and $(\alpha,\beta)=\int_{\Gamma}\alpha \wedge *\beta,$
we get 
$$\int_\Gamma d'' \omega_n\wedge \varphi=(-1)^{p-1}\int_\Gamma d'' \omega_n\wedge **\varphi=(-1)^{p-1}(d'' \omega_n,*\varphi),$$
and 
$$ (-1)^{p+1}\int_\Gamma \omega_n\wedge d'' \varphi=  - \int_\Gamma \omega_n\wedge ** d'' \varphi =-(\omega_n,* d'' \varphi).$$
Thus $$(-1)^{p}(d'' \omega_n,*\varphi)=(\omega_n,* d'' \varphi).$$
Taking limit as $n\rightarrow \infty$ we get
$$(-1)^{p}(\psi,*\varphi)=(\omega,* d'' \varphi).$$
This equation is equivalent to
$$\int_\Gamma \psi\wedge \varphi = (-1)^{p+1}\int_\Gamma \omega\wedge d'' \varphi.$$
Hence, $\psi$ is the weak $d''-$differential of $\omega.$
\end{proof}

\subsection{$L^2$-cohomology and the \v{C}ech to de Rham isomorphism.}
Let $H^{q}(\mathcal{L}^{p,*}(\Gamma),d'')$ be the cohomology group of the complex
$$0\rightarrow \mathcal{D}^{p,0}(\Gamma) \rightarrow \mathcal{L}^{p,1}(\Gamma) \rightarrow 0,$$
where $\mathcal{D}^{p,0}(\Gamma)\subset \mathcal{L}^{p,0}(\Gamma)$ is the domain of the operator $d''.$
\begin{theorem}\label{th.exct}
For a sufficiently small neighborhood $U$ of a point $x\in \Gamma$ there are exact sequences:   
\begin{equation}\label{eq.exct_shfs}
\begin{aligned} 
0\rightarrow\mathbb{R}_{\Gamma}(U)\xrightarrow{i}\mathcal{D}^{0,0}(U) \xrightarrow{d''} \mathcal{L}^{0,1}(U) \rightarrow 0,\\
0\rightarrow \Lambda^1_{\Gamma}(U)\xrightarrow{i}\mathcal{D}^{1,0}(U) \xrightarrow{d''} \mathcal{L}^{1,1}(U) \rightarrow 0,
\end{aligned}
\end{equation}
where $i$ is a natural inclusion of subpresheaves.
\end{theorem}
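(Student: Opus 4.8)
The plan is to prove the two sequences exact separately at each of the three types of points of $\Gamma$, since for sufficiently small $U$ every point falls into exactly one local model: an interior edge point with $U\cong(a,b)$ a finite interval; a vertex $v$ of degree $d\ge 2$ with $U$ the star $\bigsqcup_{d}(-\varepsilon,0]/\!\sim$ of (\ref{eq.nbhd}); or a degree one vertex with $U\cong[-\infty,a)$. In every case injectivity of $i$ is automatic, being the inclusion of a subpresheaf, and $\im i\subseteq\ker d''$ holds because elements of $\mathbb{R}_\Gamma(U)$ and $\Lambda^1_\Gamma(U)$ have locally constant coefficients and are therefore weakly $d''$-closed. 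It then remains, in each model, to prove the reverse inclusion $\ker d''\subseteq\im i$ (exactness in the middle) and the surjectivity of $d''$.

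The interior and infinite-ray models are essentially already handled by the earlier results. For $U\cong(a,b)$, Lemma \ref{lm.H1} identifies $\mathcal{D}^{p,0}(U)$ with the Sobolev space $H^1(a,b)$ acting on coefficients and identifies $d''$ with (a sign times) the weak derivative; surjectivity then follows by integrating $\omega$ from a fixed basepoint, which lands in $H^1$ hence in $L^2$ on the bounded interval, and middle exactness follows since a coefficient with vanishing weak derivative is a constant, i.e.\ an element of $\mathbb{R}_\Gamma(U)$ resp.\ $\Lambda^1_\Gamma(U)$. For $U\cong[-\infty,a)$ I would simply invoke Lemma \ref{lm.main}: parts (1)--(3) supply the bounded solution operator $T_U$ and hence surjectivity, while part (4) computes the kernel. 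Here one records that the kernel is $0$ for $p=1$, matching $\Lambda^1_\Gamma(U)=0$ (a nonzero constant $(1,0)$-form has infinite $L^2$-norm on the ray), while for $p=0$ it is $\mathbb{R}=\mathbb{R}_\Gamma(U)$, which lies in $\mathcal{D}^{0,0}(U)$ precisely because the convergence of $\int_U g$ in Definition \ref{def.kh} makes constants square-integrable.

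The substantive case is the vertex $v$ of degree $d\ge 2$, where the quantum-graph boundary conditions must be extracted from the weak formulation. Writing the coefficient of $\psi$ on the $j$-th half-edge as $\psi_j(x)$, the Corollary to Lemma \ref{lm.H1} furnishes well-defined boundary values $\psi_j(0)$. I would test the weak equation (\ref{eq.int_by_parts}) against a regular form $\varphi$ compactly supported in $U$ and integrate by parts edge by edge: the interior contributions reproduce the edgewise relation $d''\psi=\omega$, the endpoints at $x=-\varepsilon$ vanish by compact support, and there remains a single boundary sum of the shape $\sum_j\psi_j(0)\varphi_j(0)$. Because the admissible test forms are \emph{regular} — satisfying Kirchhoff's law $\sum_j\varphi_j(0)=0$ when $p=0$, and continuity (all $\varphi_j(0)$ equal) when $p=1$ — this sum vanishes for all admissible $\varphi$ if and only if $\psi$ is continuous at $v$ (case $p=0$) resp.\ satisfies Kirchhoff's law $\sum_j\psi_j(0)=0$ (case $p=1$). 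This equivalence is the crux: membership in $\mathcal{D}^{p,0}(U)$ encodes exactly the vertex condition defining $\mathbb{R}_\Gamma$ resp.\ $\Lambda^1_\Gamma$, so a weakly $d''$-closed $\psi$ is constant on each edge and satisfies the vertex condition, hence lies in $\mathbb{R}_\Gamma(U)$ resp.\ $\Lambda^1_\Gamma(U)$ (middle exactness); and surjectivity becomes easy, since given $\omega$ one integrates on each edge out of the vertex and fixes the integration constants to enforce the vertex condition — a common constant for $p=0$, and the value $0$ for $p=1$.

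I expect the vertex computation to be the main obstacle. The delicate point is not the calculus but keeping precise track of the boundary term produced by integration by parts and pairing it correctly against the constrained space of regular test forms, which is what converts the analytic statement into the combinatorial Kirchhoff/continuity condition; this step genuinely relies on the trace result (the Corollary to Lemma \ref{lm.H1}) to give meaning to $\psi_j(0)$. By contrast, the remaining $L^2$ bookkeeping on the infinite ray is absorbed into the already-established Lemma \ref{lm.main} and the convergence hypotheses of Definition \ref{def.kh}.
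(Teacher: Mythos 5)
Your proposal is correct and follows essentially the same route as the paper: a case analysis over the three local models, with Lemma \ref{lm.main} supplying the solution operator on infinite rays, and preimages at a vertex of degree $\geq 2$ built by integrating each $\omega_j$ outward from the vertex so that all boundary values vanish, which enforces Kirchhoff's law resp.\ continuity. The only differences are of emphasis: the paper packages surjectivity as a bounded operator $T_U$ (Lemma \ref{lm.invers}), whose boundedness is reused later in the closed-range argument, while it merely asserts the kernel identification at vertices that you work out in detail by pairing the boundary sum $\sum_j \psi_j(0)\varphi_j(0)$ against the constrained space of regular test forms.
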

\begin{proof}
Firstly, we will prove the following statement.
\begin{lemma}\label{lm.invers}
 Let $U$ be a sufficiently small neighborhood of a point in $\Gamma$ then there is a bounded operator 
 $$T_U:\mathcal{L}^{p,1}_{\Gamma}(U)\rightarrow \mathcal{D}^{p,0}_{\Gamma}(U)$$
 such that $d'' T_U = \Id.$ 
\end{lemma}
\begin{proof}
To prove the statement we will consider several distinct cases: the neighborhood $U$ can be a neighborhood of a degree $1$ vertex, or of a degree $n\geq 2$ vertex, or of an internal point of an edge; $p$ can be equal to $0$ or $1.$

Let $U\simeq[-\infty,a)$ be a neighborhood of a degree $1$ vertex. We identify $-\infty$ with this vertex.
In this case the required operator $ T_U:\mathcal{L}^{p,1}_{\Gamma}(U)\rightarrow \mathcal{D}^{p,0}_{\Gamma}(U)$ was constructed in Lemma \ref{lm.main}.

Given a vertex $v$ of $\Gamma$ of degree $d\geq2.$ Consider a neighborhood $U$ of $v:$   
$$ U =\bigsqcup_{\mbox{$d$-times}} (-a,0]/\sim,$$
where points $0$ of the different intervals are all identified by the equivalence relation $\sim.$ The class of $0$ is identified with the vertex $v$. 

Given a form $\omega\in\mathcal{L}^{1,1}(U).$ Suppose $e_i\simeq(-a,0]\subset U, i=1,\dots,d$ is the $i-$th edge of $U$ 
and $\omega_i=\omega_i(x)  d'x \wedge d''x$ is the restriction of $\omega$ to $e_i.$
Let us define $\psi= T_U \omega,$ where $\psi_i=\psi_i(x) d' x$ is the restriction of $\psi$ to $e_i,$ as follows:
$$\psi_i(x)=\int^0_x  \omega_i(t) dt.$$
These functions $\psi_i(x)$ are well-defined since they can be expressed as a scalar product $$\psi_i(x)=(\omega,I_{i,x} g),$$ where $\omega,I_{i,x} g \in \mathcal{L}^{1,1}(U),$ and $I_{i,x}$ is the indicator function of the set $[x,0]\subset e_i.$

Since $\psi_i(0)=0,i=1,\dots,d,$ the Kirchhoff's law holds for $\psi.$
Using the same arguments as in Lemma \ref{lm.main} we can check that 
$\psi \in \mathcal{D}^{1,0}(U),$ $d'' \psi = \omega,$ and $T_U$ is bounded.  

Finally, if $\omega\in\mathcal{L}^{0,1}(U),$ and $\omega_i= \omega_i(x) d''x$ is the restriction of $\omega$ to $e_i.$ 
Let us define $\psi= T_U \omega,$ where $\psi_i=\psi_i(x)$ is the restriction of $\psi$ to $e_i,$ as follows:  
$$\psi_i(x)=-\int^0_x  \omega_i(t) dt.$$
Since  $\psi_i(0)=0,$ the Continuity property hold at the vertex $v.$ Again, using the same arguments as above we can check that $\psi \in \mathcal{D}^{0,0}(U),$ $d'' \psi = \omega,$ and $T_U$ is bounded.  

The case of a neighborhood of an internal point of an edge is equivalence to the case of a neighborhood of degree $2$ vertex.
\end{proof}

Let $U$ be a sufficiently small neighborhood of a point in $\Gamma.$
The kernel of $d'':\mathcal{D}^{p,0}_{\Gamma}(U) \rightarrow \mathcal{L}^{p,1}_{\Gamma}(U)$ coincides with
$\mathbb{R}_{\Gamma}(U)$ or $\Lambda^1_{\Gamma}(U).$
By Lemma \ref{lm.invers} the map $d'':\mathcal{D}^{0,0}_{\Gamma}(U) \rightarrow \mathcal{L}^{0,1}_{\Gamma}(U)$ is surjective.
Therefore, the sequences (\ref{eq.exct_shfs}) are exact.
\end{proof}

\begin{proposition}There is an isomorphism
$$H^{p,q}(\Gamma)\cong H^{q}(\mathcal{L}^{p,*}(\Gamma),d'').$$
\end{proposition}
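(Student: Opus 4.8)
The plan is to run the \v{C}ech-to-de~Rham argument exactly as in the proof of Proposition~\ref{pr.smooth_CdR}, but with the fine acyclic sheaves $\mathcal{E}^{p,q}_\Gamma$ replaced by the presheaves $\mathcal{D}^{p,0}$ and $\mathcal{L}^{p,1}$, and with Proposition~\ref{pr.ex_smooth} replaced by its $L^2$-counterpart, Theorem~\ref{th.exct}. Since $\Gamma$ is compact, I would fix once and for all a \emph{finite} acyclic open cover $\mathfrak{U}=\{U_i\}$ such that every nonempty intersection $U_{i_0\dots i_r}$ is one of the ``sufficiently small'' neighborhoods to which Theorem~\ref{th.exct} applies; concretely, small star neighborhoods of the vertices together with small open subintervals of the edges, so that all intersections are small intervals. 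This is the same kind of cover used in Proposition~\ref{pr.smooth_CdR}, so it is acyclic for the locally constant sheaf $\mathbb{R}_\Gamma$ and for $\Omega^1_\Gamma$.

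Next I would form the \v{C}ech double complex $C^{r,s}=C^r(\mathfrak{U},\mathcal{A}^s)$ with $\mathcal{A}^0=\mathcal{D}^{p,0}$ and $\mathcal{A}^1=\mathcal{L}^{p,1}$, equipped with the \v{C}ech differential $\delta$ and the weak differential $d''$, and compute its total cohomology in two ways. Filtering so as to take $d''$-cohomology first: by Theorem~\ref{th.exct}, for each tuple the augmented column $0\to\mathcal{F}(U_{i_0\dots i_r})\to\mathcal{D}^{p,0}(U_{i_0\dots i_r})\xrightarrow{d''}\mathcal{L}^{p,1}(U_{i_0\dots i_r})\to 0$ is exact, where $\mathcal{F}=\mathbb{R}_\Gamma$ if $p=0$ and $\mathcal{F}=\Omega^1_\Gamma$ if $p=1$. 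Hence the $d''$-cohomology collapses to $C^r(\mathfrak{U},\mathcal{F})$ in degree $s=0$ and vanishes in degree $s=1$, so the total cohomology equals $\check{H}^q(\mathfrak{U},\mathcal{F})$, which by acyclicity of the cover (Leray) is $H^q(\Gamma,\mathcal{F})=H^{p,q}(\Gamma)$.

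Filtering the other way, I would take $\delta$-cohomology first. Here I would use that the presheaves $\mathcal{A}^s$ behave like fine sheaves with respect to the finite cover: using a partition of unity $\{\rho_i\}\subset\mathcal{E}^{0,0}(\Gamma)$ subordinate to $\mathfrak{U}$, the homotopy operator $(K\sigma)_{i_0\dots i_{r-1}}=\sum_j\rho_j\,\sigma_{j i_0\dots i_{r-1}}$, each summand extended by zero, satisfies $\delta K+K\delta=\Id$ on positive \v{C}ech degrees, whence $\check{H}^r(\mathfrak{U},\mathcal{A}^s)=0$ for $r\geq 1$, while $\check{H}^0(\mathfrak{U},\mathcal{A}^s)=\mathcal{A}^s(\Gamma)$ by finite gluing. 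Thus the total cohomology equals $H^q(\mathcal{L}^{p,*}(\Gamma),d'')$, and comparing the two computations gives the desired isomorphism $H^{p,q}(\Gamma)\cong H^q(\mathcal{L}^{p,*}(\Gamma),d'')$.

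The main obstacle is the fineness step. Because $\mathcal{L}^{p,q}$ and $\mathcal{D}^{p,0}$ are only presheaves, satisfying gluing solely for finite covers, I must check that multiplying an $L^2$ (resp. weakly differentiable) section by $\rho_j$ and extending by zero stays inside $\mathcal{L}^{p,q}$ (resp. $\mathcal{D}^{p,0}$), and that the finitely many resulting summands glue to a genuine section over $U_{i_0\dots i_{r-1}}$. Extension by zero is legitimate because $\supp\rho_j\subset U_j$ keeps the product away from the relevant part of the boundary, and for $\mathcal{D}^{p,0}$ the Leibniz rule for the weak differential shows $\rho_j\psi$ remains weakly differentiable; finiteness of the cover guarantees that no $L^2$-norm blows up in the summation, which is precisely where the finite gluing property is indispensable. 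This is the same verification already carried out in the remark on the gluing axiom for $\check{H}^0$, now extended to all \v{C}ech degrees.
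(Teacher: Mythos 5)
Your proposal is correct and follows essentially the same route as the paper, whose proof simply says to repeat the \v{C}ech-to-de Rham argument of Proposition~\ref{pr.smooth_CdR} using the local exact sequences of Theorem~\ref{th.exct}. Your additional care about the presheaf issues (finite gluing, partition-of-unity homotopy, Leibniz rule for the weak differential) fills in exactly the details the paper leaves implicit, in the spirit of its remark on the gluing axiom.
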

\begin{proof}
Using the exact sequences (\ref{eq.exct_shfs}) we can repeat the proof of the Proposition \ref{pr.smooth_CdR}.
\end{proof}

\subsection{Integration by parts for weakly $d''-$differentiable froms.}
\begin{proposition}\label{pr.intbypart}
If $\psi\in\mathcal{D}^{0,0}(\Gamma)$ and $\varphi\in\mathcal{D}^{1,0}(\Gamma)$ are two  weakly $d''-$differentiable froms,
then the equation of integration by parts holds: 
\begin{equation}\label{eq.intbypart}
 \int_\Gamma d'' \psi\wedge \varphi = - \int_\Gamma \psi\wedge d'' \varphi.
\end{equation}
\end{proposition}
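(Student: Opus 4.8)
The plan is to reduce the identity to integration by parts edge by edge and to show that every boundary contribution cancels. Since $\psi$ and $\varphi$ are only weakly $d''$-differentiable, the first task is to recover enough pointwise structure. By Lemma \ref{lm.H1} the coefficients of $\psi$ and $\varphi$ lie in $H^1$ on every finite subinterval of an edge, so by the corollary to that lemma they have well-defined boundary traces $\psi_e(v),\varphi_e(v)$ at each vertex $v$ of degree $\geq 2$. Before computing I would establish the two structural facts that make the boundary terms vanish: that $\psi$ is \emph{continuous} at such $v$, and that $\varphi$ satisfies \emph{Kirchhoff's law} there. Both follow by specializing the defining relation (\ref{eq.int_by_parts}) to regular test forms supported in a single small neighborhood of $v$; the edgewise integration by parts legitimate for $H^1$ coefficients leaves only the values at $v$, and letting the admissible boundary data of the test form vary forces $(\psi_e(v))_e$ to be independent of $e$ (continuity) and $\sum_{e\in E_v}\varphi_e(v)=0$ (Kirchhoff).

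Next comes the edgewise computation. On an edge, writing $\psi=\psi_e(x)$ and $\varphi=\varphi_e(x)\,d'x$, one has $d''\psi\wedge\varphi=-\psi_e'\varphi_e\,d'x\wedge d''x$ and $\psi\wedge d''\varphi=-\psi_e\varphi_e'\,d'x\wedge d''x$, so the two integrands sum to $-(\psi_e\varphi_e)'\,d'x\wedge d''x$. On a finite edge the fundamental theorem of calculus for $H^1$ functions produces the boundary values $-[\psi_e\varphi_e]$ at the two endpoints. Summing over all edges and regrouping by vertices, a short orientation bookkeeping shows that the total contribution at each degree-$\geq 2$ vertex $v$ equals $-\psi(v)\sum_{e\in E_v}\varphi_{e,v}$, which vanishes by continuity and Kirchhoff. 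The point is that the $(1,0)$-coefficient changes sign under the reparametrization $x\mapsto -l(e)-x$ used to place $v$ at $0$, so every incident edge contributes with the same sign and Kirchhoff's law applies directly.

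The remaining and genuinely delicate contributions come from the infinite edges, i.e.\ the degree-one vertices at $-\infty$, where I must show $\lim_{x\to-\infty}\psi_e(x)\varphi_e(x)=0$. Restricting to a neighborhood $U\cong[-\infty,a)$, part (4) of Lemma \ref{lm.main} identifies $\varphi|_U$ with $T_U(d''\varphi)$ and $\psi|_U$ with a constant plus $T_U(d''\psi)$, yielding $|\varphi_e(x)|\leq\sqrt{\int^x_{-\infty}g(t)\,dt}\,||d''\varphi||$ and $|\psi_e(x)|\leq |C|+\sqrt{a-x}\,||d''\psi||$. Their product is then bounded by $\sqrt{\int^x_{-\infty}g}$ and by $\sqrt{(a-x)\int^x_{-\infty}g}$; the first tends to $0$ since $\int^a_{-\infty}g<\infty$, and for the second I would use the elementary inequality $(a-x)\int^x_{-\infty}g(t)\,dt\leq\int^x_{-\infty}(a-t)g(t)\,dt$, whose right-hand side is the tail of the convergent integral $\int^a_{-\infty}(a-t)g\,dt$ guaranteed by condition (3) of Definition \ref{def.kh}. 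To legitimize the fundamental theorem of calculus over the infinite interval I would separately note that $(\psi_e\varphi_e)'=\psi_e'\varphi_e+\psi_e\varphi_e'$ lies in $L^1$, by Cauchy--Schwarz applied to exactly the $L^2$ pairings appearing in the statement. Combining the vanishing of all vertex and infinity boundary terms gives $\int_\Gamma d''\psi\wedge\varphi+\int_\Gamma\psi\wedge d''\varphi=0$, which is the claim.

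I expect the main obstacle to be precisely this infinity boundary term: $\psi$ may grow like $\sqrt{a-x}$ while $\varphi$ only decays like $\sqrt{\int^x_{-\infty}g}$, so the vanishing of their product is not automatic and rests on the quantitative K\"{a}hler integrability hypothesis through the comparison inequality above. The orientation bookkeeping at finite vertices, though routine, is the second place where care is needed.
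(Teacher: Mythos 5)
Your proof is correct, and its analytic core---killing the boundary term at infinity via the operator $T_U$ of Lemma \ref{lm.main}(4) together with the moment condition of Definition \ref{def.kh}---is the same as the paper's. The organization, however, differs in two ways. First, the paper does not integrate by parts on $\psi$ and $\varphi$ edge by edge; it splits $\psi=\rho_0\psi+\rho_1\psi$ with a cutoff $\rho_0$ equal to $1$ on the compact part of $\Gamma$ and to $0$ near the infinite tails, proves the identity for each piece separately, and uses $(\rho_1\psi)(0)=0$ at the finite end of each infinite edge to force the constant in the identification $\rho_1\psi=T_U\bigl(d''(\rho_1\psi)\bigr)+C$ to vanish. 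You instead keep the constant $C$ and absorb it into the estimate via $|C|\sqrt{\int_{-\infty}^x g}\to 0$; both work, and your decay bound $\sqrt{(a-x)\int_{-\infty}^x g}\le\sqrt{\int_{-\infty}^x (a-t)g(t)\,dt}$ is an equivalent (arguably slightly cleaner) variant of the paper's bound through the second moment $\int_{-\infty}^x t^2g(t)\,dt$. Second---and here your write-up is genuinely stronger---the paper dismisses the finite-vertex boundary terms by saying they ``vanish by the same reasons as in Stokes' Theorem (Theorem \ref{th.stokes})''. But Stokes' theorem is proved for \emph{regular} forms, for which continuity and Kirchhoff's law are part of the definition; for merely weakly $d''$-differentiable $\psi$ and $\varphi$ these vertex conditions must be \emph{derived}, and you derive them correctly by specializing (\ref{eq.int_by_parts}) to test forms supported near a vertex and letting the admissible boundary data range over the Kirchhoff hyperplane (giving continuity of $\psi$) and over arbitrary values of a continuous test function (giving Kirchhoff's law for $\varphi$). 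This fills a step the paper leaves implicit, and your orientation bookkeeping (the sign flip of the $(1,0)$-coefficient under $x\mapsto -l(e)-x$) is exactly right. The only microscopic imprecision: convergence of $\int_{-\infty}^a(a-t)g(t)\,dt$ uses conditions (2) and (3) of Definition \ref{def.kh} jointly, not (3) alone---but the paper makes the same attribution inside Lemma \ref{lm.main}, so this is cosmetic.
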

 
\begin{proof}
 
Firstly, let us notice that both integrals in (\ref{eq.intbypart}) are well-defined, i.e., convergent. 
Indeed, consider the integral $\int_\Gamma d'' \psi\wedge \varphi,$ using the property of the Hodge star $**=\pm\Id$
we can rewrite it as
$- \int_\Gamma d'' \psi\wedge **\varphi.$ Thus, it is equal to $-(d'' \psi,*\varphi).$
Since $d'' \psi\in \mathcal{L}^{0,1}(\Gamma)$ and the Hodge star is, in this case, is an isomorphisms between $\mathcal{L}^{1,0}(\Gamma)$ and $\mathcal{L}^{0,1}(\Gamma),$ this scalar product is well-defined and, consequently, the integral is well-defined. We can apply the same argument for the second integral.

Let us choose a function $\rho_0\in \mathcal{E}^{0,0}(\Gamma)$ such that 
\begin{enumerate}
 \item its values between $0$ and $1;$
 \item it is equal to $1$ on each finite-length edge and in a neighborhood of any vertex of degree $\geq 2;$
 \item it is equal to $0$ on a neighborhood of any vertex of degree $1,$ i.e., in neighborhoods of infinite tails of the toropical curve.
\end{enumerate}
Let us denote $\rho_1=1-\rho_0.$
These two function, $\rho_0,\rho_1$ give us a partition of unity such that one of them is nonzero on a finite part of the curve another on the infinite tails.

Consider the integral
$$\int_\Gamma \psi\wedge d'' \varphi= \int_\Gamma \rho_0 \psi\wedge d'' \varphi + \int_\Gamma \rho_1 \psi\wedge d'' \varphi.$$

The support of $\rho_0$ is a union of finite-length edges and compact parts of infinite-length edges.
Using Lemma \ref{lm.H1} we obtain that $\psi$ and $\varphi$ has $H^1-$coefficients in a neighborhood of $\mathrm{supp}\;\rho_0.$
For the $H^1-$function we can apply integration by parts and the boundary terms at vertices vanish by the same reasons as in Stokes' Theorem (Theorem \ref{th.stokes}).

The second integral $\int_\Gamma \rho_1 \psi\wedge d'' \varphi$ is a sum of integrals over infinite-length edges. Suppose an infinite-length edge $e$ is isomorphic to $e\cong[-\infty,0],$ the function $\rho_1$ is equal to $0$ at a neighborhood of the point $0$ and equal to $1$ at a neighborhood of $-\infty.$ From this moment let write $\overline{\psi}$
instead of $\rho_1 \psi.$
We are going to prove that 
$$\int_{[-\infty,0]} d'' \overline{\psi} \wedge \varphi = - \int_{[-\infty,0]} \overline{\psi}\wedge d'' \varphi.$$
Then we can take a sum over all infinite-length edges this will prove the statement of the proposition.

The tropical integrals by definition equals: $$\int_e \overline{\psi}\wedge d'' \varphi = - \int^0_{-\infty} \overline{\psi}(t) \varphi'(t) d t$$ and
 $$\int_e d'' \overline{\psi}\wedge \varphi = - \int^0_{-\infty}  \overline{\psi}'(t) \varphi(t) dt.$$
 Therefore, we have to show that 
 \begin{equation}\label{eq.intbypart2}
  \int^0_{-\infty} \overline{\psi}(t) \varphi'(t) d t = - \int^0_{-\infty}  \overline{\psi}'(t) \varphi(t) dt.
 \end{equation}
Let us split both parts of the equality to sums of integrals: 
$$\int^0_{x} \overline{\psi}(t) \varphi'(t) d t + \int^x_{-\infty} \overline{\psi}(t) \varphi'(t) d t = - \int^0_{x}  \overline{\psi}'(t) \varphi(t) dt - \int^x_{-\infty}  \overline{\psi}'(t) \varphi(t) dt,$$
where $x\in (-\infty,0].$
Since our initial integrals are convergent, we have  $$\lim_{x\rightarrow -\infty} \int^x_{-\infty} \overline{\psi}(t) \varphi'(t) d t = 0$$ and 
$$\lim_{x\rightarrow -\infty} \int^x_{-\infty}  \overline{\psi}'(t) \varphi(t) dt =0.$$

By Lemma \ref{lm.H1} the restrictions of $\overline{\psi}(x),\varphi(x)$ to any interval $(x,0)\subset [-\infty,0] \cong e, x\in\mathbb{R}$ are functions form the Sobolev space $H^1(x,0).$
Thus, we can apply integration by parts on $(x,0)$:
$$\int^0_{x} \overline{\psi}(t) \varphi'(t) d t = \overline{\psi}(t) \varphi(t)|^{0}_{x} - \int^0_{x}  \overline{\psi}'(t) \varphi(t) dt.$$

Consider the term $\overline{\psi}(t) \varphi(t)|^{0}_{x} = \overline{\psi}(0) \varphi(0) - \overline{\psi}(x) \varphi(x).$
The first summand $\overline{\psi}(0)= \rho_1(0) \psi(0) =0$ is equal to zero. We are going to show that 
$$\lim_{x\rightarrow -\infty} \overline{\psi}(x) \varphi(x) = 0.$$

Consider the forms $\omega = d'' \varphi$ and $\tau=d'' \overline{\psi}.$
In the local coordinates we have:
$$\overline{\psi}=\overline{\psi}(x),\;\varphi=\varphi(x) d'x,\;\omega = \omega(x) d'x\wedge d''x,\;\tau=\tau(x)d''x.$$

Suppose $U=[-\infty,0)\subset e.$ The operator $$T_U:\mathcal{L}^{p,1}_{\Gamma}(U)\rightarrow \mathcal{D}^{p,0}_{\Gamma}(U)$$ was defined in Lemma \ref{lm.invers}. It has the following properties $d'' T_U = \Id,$ $ T_U d'' \varphi = T_U\omega = \varphi,$
and $T_U d'' \overline{\psi} = T_U \tau = \overline{\psi}+ c$ where $c$ is some constant. By the definition of $T_U$ we get:   
$$ \overline{\psi}(x)+ c = T_U \tau = -  \int^0_{x} \tau(t) dt,$$
and $$\varphi(x) d'x = T_U \omega= - (\int^x_{-\infty} \omega(t) dt) d'x.$$
Since $\overline{\psi}(0)= \rho_1(0) \psi(0)=0$ and $\psi(0)+c=\int^0_0 \tau(t) dt,$
the constant $c$ is equal to $0,$ and $\overline{\psi}= T_U \tau.$

The the following estimates was proven for $T_U:$
$$|\varphi(x)| \leq ||\omega|| \sqrt{\int^x_{-\infty} g(t) dt},$$
$$|\overline{\psi}(x)|\leq ||\tau|| \sqrt{|x|}$$

By definition of K\"{a}hler metric (Definition \ref{def.kh}) the integral $\int^0_{-\infty} t^2 g(t) dt$ converges 
which is equivalent to
$$\lim_{x\rightarrow -\infty} \int^x_{-\infty} t^2 g(t) dt =0.$$
For  $t\leq x<0$  we have $ x^2 g(t)  \leq t^2 g(t)$ and 
$$ x^2 \int^x_{-\infty} g(t) dt \leq \int^x_{-\infty} t^2 g(t) dt$$
which is equivalent to
$$ \int^x_{-\infty} g(t) dt \leq \frac{1}{x^2} \int^x_{-\infty} t^2 g(t) dt.$$

Finally, combining all our estimates we get 
$$|\varphi(x) \overline{\psi}(x)|\leq ||\omega|| \cdot ||\tau|| \sqrt{|x|} \sqrt{\int^x_{-\infty} g(t) dt} 
\leq ||\omega|| \cdot ||\tau|| \frac{1}{\sqrt{|x|}} \int^x_{-\infty} t^2 g(t) dt.$$
The right hand side tends to $0$ as $x$ tends to $-\infty.$
\end{proof}

\subsection{The adjoint of $d''$.}
Let us consider the adjoint operator 
$$d''^*:\mathcal{L}^{p,1}(\Gamma) \rightarrow \mathcal{L}^{p,0}(\Gamma).$$ 
By definition, $d''^* \omega = \psi$ if for any $\varphi\in \mathcal{D}^{p,0}(\Gamma)$ holds
$$(d'' \varphi, \omega)=(\varphi,\psi).$$
\begin{proposition}\label{pr.adj}
 
 The adjoint operator $$d''^*: \mathcal{L}^{p,1}(\Gamma) \rightarrow \mathcal{L}^{p,0}(\Gamma)$$ 
 is densely defined and closed. It is equal to $$d''^* = -*d*.$$
 In particular, $\psi\in \mathcal{D}(d''^*)$ if and only if $$* \psi \in \mathcal{D}^{1-p,0}(\Gamma).$$ Its adjoint  $d''^{**}$ equals $d''.$
\end{proposition}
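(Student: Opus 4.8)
The plan is to reduce everything to a single explicit computation that simultaneously identifies the domain of $d''^*$ and produces the formula $d''^*=-*d''*$; the remaining structural assertions then follow from general operator theory together with the facts, established earlier, that $d''$ is densely defined (its domain contains the dense subspace $\mathcal{E}^{p,0}(\Gamma)$) and closed. Throughout I would lean on two elementary facts about the Hodge star: that $*$ is an isometric isomorphism $\mathcal{L}^{p,q}(\Gamma)\to\mathcal{L}^{1-p,1-q}(\Gamma)$ with $**=(-1)^{p+q}\Id$, and that $(\alpha,\beta)=\int_\Gamma\alpha\wedge *\beta$ for forms of complementary bidegree.

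Fix $\omega\in\mathcal{L}^{p,1}(\Gamma)$. For the forward implication I would assume $*\omega\in\mathcal{D}^{1-p,0}(\Gamma)$ and show $\omega\in\mathcal{D}(d''^*)$ with $d''^*\omega=-*d''*\omega$. For any $\varphi\in\mathcal{D}^{p,0}(\Gamma)$ I rewrite $(d''\varphi,\omega)=\int_\Gamma d''\varphi\wedge *\omega$ and apply the integration-by-parts Proposition \ref{pr.intbypart} to the weakly $d''$-differentiable forms $\varphi$ and $*\omega$. Here one of these two forms lies in $\mathcal{D}^{0,0}(\Gamma)$ and the other in $\mathcal{D}^{1,0}(\Gamma)$ according to the value of $p$, so the proposition applies after at most a sign-carrying transposition of the wedge factors. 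This yields $(d''\varphi,\omega)=(-1)^{p+1}\int_\Gamma\varphi\wedge d''(*\omega)$. Re-expressing the right-hand side as a scalar product via $(\varphi,\eta)=\int_\Gamma\varphi\wedge *\eta$ and solving for $\eta$ using $**=(-1)^{p+q}\Id$ gives $\eta=-*d''*\omega\in\mathcal{L}^{p,0}(\Gamma)$; hence $\omega\in\mathcal{D}(d''^*)$ and $d''^*\omega=-*d''*\omega$.

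For the converse I would argue purely from the definitions, so that no further analytic input is required. Suppose $\omega\in\mathcal{D}(d''^*)$ with $d''^*\omega=\psi$, i.e. $\int_\Gamma d''\varphi\wedge *\omega=\int_\Gamma\varphi\wedge *\psi$ for all $\varphi\in\mathcal{D}^{p,0}(\Gamma)$, in particular for all regular $\varphi\in\mathcal{E}^{p,0}(\Gamma)$. Commuting the wedge factors on both sides, while tracking the signs produced by the bidegrees, turns this identity into exactly the defining relation for $*\omega\in\mathcal{L}^{1-p,0}(\Gamma)$ to possess the weak $d''$-differential $(-1)^{p+1}*\psi\in\mathcal{L}^{1-p,1}(\Gamma)$, tested against all regular $(p,0)$-forms. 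By uniqueness of the weak differential this shows $*\omega\in\mathcal{D}^{1-p,0}(\Gamma)$ and, applying $*$ once more, recovers $\psi=-*d''*\omega$. Thus $\mathcal{D}(d''^*)=\{\omega:*\omega\in\mathcal{D}^{1-p,0}(\Gamma)\}=*\big(\mathcal{D}^{1-p,0}(\Gamma)\big)$ and $d''^*=-*d''*$ on this domain.

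Finally the structural claims drop out. For density, since $\mathcal{D}^{1-p,0}(\Gamma)\supset\mathcal{E}^{1-p,0}(\Gamma)$ is dense in $\mathcal{L}^{1-p,0}(\Gamma)$ and $*$ is a unitary isomorphism, the domain $*\big(\mathcal{D}^{1-p,0}(\Gamma)\big)$ is dense in $\mathcal{L}^{p,1}(\Gamma)$. For closedness, $d''^*=-*d''*$ is the composite of the closed operator $d''$ with the homeomorphisms $*$ on either side, hence closed; equivalently one may simply invoke that the adjoint of a densely defined operator is always closed. The identity $d''^{**}=d''$ then follows from von Neumann's theorem, as $d''$ is densely defined and closed. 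The main obstacle is entirely concentrated in the forward implication, namely the legitimacy of integrating by parts for the merely $L^2$, weakly differentiable, non-compactly-supported forms that live on the infinite edges; this is precisely the content of Proposition \ref{pr.intbypart}, so once that is invoked the rest is sign bookkeeping with the Hodge star.
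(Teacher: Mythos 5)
Your proposal is correct and follows essentially the same route as the paper's proof: one direction is definition-chasing (the adjoint identity tested against regular forms is precisely the definition of the weak $d''$-differential of $*\omega$), the other direction rests on the integration-by-parts Proposition \ref{pr.intbypart}, and the structural claims follow from the general theory of closed densely defined operators. The only cosmetic differences are that you treat the two implications in the opposite order and obtain density of $\mathcal{D}(d''^*)$ directly from the domain characterization $*\bigl(\mathcal{D}^{1-p,0}(\Gamma)\bigr)$, whereas the paper cites the abstract fact that the adjoint of a closed densely defined operator is densely defined.
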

\begin{proof}
From the general properties of unbounded operators follows that the adjoin $A^*$ of a closed densely defined operator $A$ 
is a closed densely defined operator and its adjoint  $A^{**}$ equals $A,$ \cite[Theorem 3.1 and Theorem 3.3]{BSU}.  
Thus we have to prove that $d''^*=-*d*.$

If $A: H_1 \rightarrow H_2$ is an unbounded operator with a domain $\mathcal{D}(A)\subset H_1$ then its adjoint $A^*$
is an unbounded operator $A^*: H_1 \rightarrow H_2$ such that for any $x\in \mathcal{D}(A)\subset H_1$ holds 
$$(x, A^*y)_{H_1}=(A x, y)_{H_2},$$
the domain of $A^*$ is a maximal subspace of elements in $H_2$ satisfying that relation, i.e.,
$$\mathcal{D}(A^*)=\{x\in H_2: \exists z\in H_1, \forall y \in \mathcal{D}(A): (y,z)_{H_1}=(A y, x)_{H_2}  \}.$$

Suppose a form $\psi \in \mathcal{L}^{p,1}(\Gamma)$ is in the domain of $d''^*,$ $\psi\in \mathcal{D}(d''^*)$ and $\omega= d''^* \psi.$
Then by the definition of the adjoint operator for any form $\varphi\in\mathcal{D}^{p,0}(\Gamma)$ the following equality holds: 
$$(\varphi,\omega)=(d'' \varphi,\psi).$$
We can rewrite it as follows:
$$\int_{\Gamma}\varphi\wedge \ast \omega = \int_{\Gamma}d'' \varphi \wedge \ast \psi.$$
Since $\mathcal{E}^{p,0}(\Gamma)$ is a subspace of $\mathcal{D}^{p,0}(\Gamma)$  this equality hold for any regular form $\varphi \in \mathcal{E}^{p,0}(\Gamma).$
Hence, by the definition, $*\psi$ is $d''-$weakly differentiable form and its differential is equal to 
$$d''* \psi= (-1)^{p+1}*\omega.$$ Applying the Hodge star operator to both parts of the previous equality we get
$$*d''* \psi= (-1)^{p+1}**\omega = - \omega.$$
Hence $$d''^* \psi = \omega = - *d''* \psi.$$

At this moment we proved that if $\psi\in \mathcal{D}(d''^*)$ then $*\psi$ is  $d''-$weakly differentiable.
Let us prove the converse: if $*\psi$ is  $d''-$weakly differentiable then $\psi\in \mathcal{D}(d''^*).$
By Proposition \ref{pr.intbypart}, we can integrate by parts a product of two $d''-$weakly differentiable forms, i.e.,  if form  $*\psi$ is $d''-$weakly differentiable, then for any form $\varphi\in \mathcal{D}^{p,0}(\Gamma)$ holds:
$$\int_ \Gamma d'' \varphi\wedge  *\psi= (-1)^{p+1} \int_\Gamma \varphi\wedge d'' *\psi =   \int_\Gamma \varphi\wedge * (-* d'' *\psi).$$
That equality can be written as
$$(d'' \varphi,\psi)=(\varphi,-* d'' *\psi).$$

Thus we proved that the domain of  $\mathcal{D}(d''^*)$ coincides with the space of forms such that theirs Hodge stars are $d''-$weakly differentiable.
\end{proof}

\subsection{The Laplace-Beltrami operator and harmonic tropical superforms.}
Let us define the \emph{Laplace-Beltrami operator} as follows 
$$\Delta=d'' d''^* + d''^* d'': \mathcal{L}^{p,q}(\Gamma) \rightarrow \mathcal{L}^{p,q}(\Gamma).$$
It's domain equals 
$$\mathcal{D}(\Delta)=\{\omega\in \mathcal{L}^{p,q}(\Gamma): \omega\in \mathcal{D}(d''^*), \omega\in \mathcal{D}(d''),  (d''\omega)\in \mathcal{D}(d''^*), (d''^*\omega)\in \mathcal{D}(d'')\}.$$
By the dimensional reasons one of the summands in $\Delta$ is identically equal to zero, so $\Delta$ is either equals  $\Delta=d'' d''^*$ or  $\Delta=d''^* d''.$ 

\begin{proposition}\label{pr.har_cl}
Let $\omega$ be an element of $\mathcal{L}^{p,q}(\Gamma),$ then
$\Delta \omega=0$ if and only if $d''\omega=0$ and $d''^*\omega =0.$
\end{proposition}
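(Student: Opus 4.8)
The plan is to reduce everything to the classical Hodge identity $(\Delta\omega,\omega)=\|d''\omega\|^2+\|d''^*\omega\|^2$ and then invoke positivity of the inner product on the Hilbert space $\mathcal{L}^{p,q}(\Gamma)$. One direction is immediate: if $d''\omega=0$ and $d''^*\omega=0$, then each of the two summands in $\Delta=d''d''^*+d''^*d''$ kills $\omega$, so $\Delta\omega=0$ with no analysis required.

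For the converse I would start by observing that the hypothesis $\Delta\omega=0$ is only meaningful for $\omega$ in the domain $\mathcal{D}(\Delta)$, which by definition guarantees $\omega\in\mathcal{D}(d'')\cap\mathcal{D}(d''^*)$ together with $d''\omega\in\mathcal{D}(d''^*)$ and $d''^*\omega\in\mathcal{D}(d'')$. These are precisely the inclusions that make the scalar products below legitimate. Next I would evaluate $(\Delta\omega,\omega)$ one summand at a time. For the term $(d''^*d''\omega,\omega)$, I apply the defining adjoint relation of Proposition \ref{pr.adj}, namely $(d''\varphi,\eta)=(\varphi,d''^*\eta)$ for $\varphi\in\mathcal{D}(d'')$ and $\eta\in\mathcal{D}(d''^*)$, with $\varphi=\omega$ and $\eta=d''\omega$; since the inner product is real and symmetric this gives $(d''^*d''\omega,\omega)=(d''\omega,d''\omega)=\|d''\omega\|^2$. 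Symmetrically, using $d''^{**}=d''$ from the same proposition, so that $d''$ is the adjoint of $d''^*$, the relation $(d''^*\varphi,\psi)=(\varphi,d''\psi)$ with $\varphi=\omega\in\mathcal{D}(d''^*)$ and $\psi=d''^*\omega\in\mathcal{D}(d'')$ yields $(d''d''^*\omega,\omega)=\|d''^*\omega\|^2$.

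Adding the two contributions produces $(\Delta\omega,\omega)=\|d''\omega\|^2+\|d''^*\omega\|^2$. If $\Delta\omega=0$ then the left-hand side vanishes, and since both norms are nonnegative we conclude $d''\omega=0$ and $d''^*\omega=0$, completing the proof.

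I expect the only genuine difficulty to be bookkeeping of domains rather than any hard estimate: because $d''$ and $d''^*$ are unbounded, the adjoint relations may be used only when both entries sit in the correct domains, and the whole point of the definition of $\mathcal{D}(\Delta)$ is to secure exactly those inclusions. I would also remark that by the dimensional reasons noted after the definition of $\Delta$ one summand always vanishes on a curve (for $q=0$ one has $d''^*\omega=0$ and $\Delta=d''^*d''$, while for $q=1$ one has $d''\omega=0$ and $\Delta=d''d''^*$), so the identity degenerates to a single term; but the computation above is uniform and requires no case distinction.
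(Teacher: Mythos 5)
Your proof is correct and takes essentially the same route as the paper: both evaluate $(\Delta\omega,\omega)=\|d''\omega\|^2+\|d''^*\omega\|^2$ by applying the adjoint relation to each summand (with $d''^{**}=d''$ from Proposition \ref{pr.adj} handling the term $(d''d''^*\omega,\omega)$) and then conclude from positivity of the norms, the reverse direction being immediate. Your bookkeeping of the domain inclusions packaged in $\mathcal{D}(\Delta)$ is, if anything, slightly more explicit than the paper's, but the argument is the same.
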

\begin{proof}
Suppose $\Delta \omega=0.$ If $\omega\in \mathcal{D}(\Delta),$ then $\omega\in \mathcal{D}(d''^*)\cap \mathcal{D}(d'').$ 
By Proposition \ref{pr.adj} we have $d''^{**}=d''.$  From the definition of an  adjoint operator we get
$$0=(\Delta \omega,\omega)=(d'' d''^* + d''^* d'' \omega,\omega)=(d''^* \omega,d''^*\omega)+(d'' \omega,d''^{**}\omega)=(d''^* \omega,d''^*\omega)+(d'' \omega,d''\omega).$$ Hence $||d''^* \omega||=0, ||d'' \omega||=0,$ and $d''^* \omega = d''\omega =0.$ The converse follows directly from the definition of the Laplace-Beltrami operator.
\end{proof}

\begin{definition}
Let us denote the kernel of the Laplace-Beltrami operator $\Delta:\mathcal{L}^{p,q}(\Gamma)\rightarrow \mathcal{L}^{p,q}(\Gamma)$ by
$\mathcal{H}^{p,q}(\Gamma).$ We call this space $\mathcal{H}^{p,q}(\Gamma)$ the space of \emph{harmonic tropical superform} of degree $(p,q)$ on $\Gamma.$ 
\end{definition}

By Proposition \ref{pr.har_cl} any harmonic superform is closed, hence there is the map
$i:\mathcal{H}^{p,q}(\Gamma) \rightarrow H^{p,q}(\Gamma),$ that maps any harmonic form to its class in the cohomology group.

\begin{proposition}\label{pr.hode_iso} 
  The map $i:\mathcal{H}^{p,q}(\Gamma) \rightarrow H^{p,q}(\Gamma)$ is an isomorphism.
\end{proposition}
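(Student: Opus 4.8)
The plan is to split the argument by the second index, since the complex $0\to\mathcal{D}^{p,0}(\Gamma)\xrightarrow{d''}\mathcal{L}^{p,1}(\Gamma)\to 0$ treats $d''$ asymmetrically and $q$ can only be $0$ or $1$. In bidegree $(p,0)$ the summand $d''d''^{*}$ of $\Delta$ vanishes for dimensional reasons, so by Proposition~\ref{pr.har_cl} harmonicity amounts to $d''\omega=0$; thus $\mathcal{H}^{p,0}(\Gamma)=\ker\bigl(d''\colon\mathcal{D}^{p,0}(\Gamma)\to\mathcal{L}^{p,1}(\Gamma)\bigr)$. Because the complex begins in degree $0$ there are no coboundaries there, so $H^{p,0}(\Gamma)=\ker d''$ as well and $i$ is the identity map. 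All the substance therefore lies in bidegree $(p,1)$, where instead the summand $d''^{*}d''$ vanishes and harmonicity reduces to $\mathcal{H}^{p,1}(\Gamma)=\ker d''^{*}$.

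For injectivity of $i$ in bidegree $(p,1)$, suppose a harmonic form $\omega$ is a coboundary, say $\omega=d''\alpha$ with $\alpha\in\mathcal{D}^{p,0}(\Gamma)$. Since $\omega\in\mathcal{D}(d''^{*})$ with $d''^{*}\omega=0$, one computes $\|\omega\|^{2}=(d''\alpha,\omega)=(\alpha,d''^{*}\omega)=0$, so $\omega=0$. For surjectivity I would use the adjoint relation of Proposition~\ref{pr.adj}, which gives $\ker d''^{*}=(\operatorname{im}d'')^{\perp}=(\overline{\operatorname{im}d''})^{\perp}$ and hence the orthogonal decomposition
$$\mathcal{L}^{p,1}(\Gamma)=\mathcal{H}^{p,1}(\Gamma)\oplus\overline{\operatorname{im}(d'')}.$$
If the closure may be dropped, then every class in $H^{p,1}(\Gamma)=\mathcal{L}^{p,1}(\Gamma)/\operatorname{im}(d'')$ is represented by the $\mathcal{H}^{p,1}(\Gamma)$-component of any of its lifts, and surjectivity of $i$ follows.

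The heart of the matter, and the step I expect to be the only genuine obstacle, is therefore to prove that $\operatorname{im}(d'')$ is \emph{closed} in $\mathcal{L}^{p,1}(\Gamma)$. Two facts are already in hand: $d''$ is a closed operator, and the isomorphism $H^{p,q}(\Gamma)\cong H^{q}(\mathcal{L}^{p,*}(\Gamma),d'')$ identifies the cokernel $\mathcal{L}^{p,1}(\Gamma)/\operatorname{im}(d'')$ with the sheaf cohomology $H^{p,1}(\Gamma)$, which is finite-dimensional since $\Gamma$ is a finite graph. A closed, densely defined operator between Hilbert spaces whose range has finite codimension automatically has closed range: choosing a finite-dimensional complement $F$ of $\operatorname{im}(d'')$, the map $\mathcal{D}(d'')\oplus F\to\mathcal{L}^{p,1}(\Gamma)$, $(u,f)\mapsto d''u+f$, is a bounded bijection when $\mathcal{D}(d'')$ carries its graph norm, hence an isomorphism by the open mapping theorem, and $\operatorname{im}(d'')$ is the image of the closed subspace $\mathcal{D}(d'')\oplus 0$. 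With $\overline{\operatorname{im}(d'')}=\operatorname{im}(d'')$ the decomposition of the second paragraph removes its closure and finishes the proof. As an alternative to this soft input, one could instead assemble a global parametrix from the bounded local inverses $T_{U}$ of Lemma~\ref{lm.invers} and a finite partition of unity, exhibiting $d''$ as surjective modulo a lower-order term; but the finite-dimensionality of the cohomology makes the present argument shorter.
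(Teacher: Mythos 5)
Your proof is correct in substance, and it reaches the crux of the matter --- closedness of $\operatorname{im} d''$ in $\mathcal{L}^{p,1}(\Gamma)$ --- by a genuinely different route than the paper. The paper proves closedness constructively: it fixes a finite acyclic cover $\mathfrak{U}$, assembles the bounded local solution operators $T_{U_i}$ of Lemma~\ref{lm.invers} into a bounded operator $T$ on the \v{C}ech cochains $C_0(\mathcal{L}^{p,1})$, and shows that the continuous composite $\varepsilon\delta T$ into the finite-dimensional \v{C}ech cohomology $H^1(\Gamma,\mathbb{R}_\Gamma)$ (resp.\ $H^1(\Gamma,\Omega^1_\Gamma)$) has kernel exactly $\operatorname{im} d''$, which is therefore closed. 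You instead invoke the soft functional-analytic fact that a closed, densely defined operator whose range has finite algebraic codimension automatically has closed range, with finite codimension supplied by the previously established isomorphism $H^{p,1}(\Gamma)\cong \mathcal{L}^{p,1}(\Gamma)/\operatorname{im} d''$ together with finite-dimensionality of the cohomology of $\mathbb{R}_\Gamma$ and $\Omega^1_\Gamma$ on a finite graph. There is no circularity here: that isomorphism is proved in the paper by the \v{C}ech--de~Rham argument, which uses only local exactness and partitions of unity, not closed range. Your version is shorter and more modular (it needs the $T_U$ only through local exactness, never their boundedness), and both arguments ultimately feed on the same finiteness of \v{C}ech data; the paper's version is more self-contained and yields an explicit continuous map realizing the cohomology quotient. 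Your remaining steps --- the $q=0$ case, injectivity via $\|\omega\|^2=(\alpha,d''^*\omega)=0$, and surjectivity from the decomposition $\mathcal{L}^{p,1}(\Gamma)=\ker d''^*\oplus\operatorname{im} d''$ --- coincide with the paper's use of $(\operatorname{im} d'')^{\perp}=\ker d''^*$ and Propositions~\ref{pr.adj} and~\ref{pr.har_cl}.

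One small repair is needed in your open-mapping argument: the map $(u,f)\mapsto d''u+f$ from $\mathcal{D}(d'')\oplus F$ is not a bijection, because $d''$ has nontrivial kernel (the global constants when $p=0$, the finite-dimensional space $\Omega^1_\Gamma(\Gamma)$ when $p=1$). Pass first to the quotient $\mathcal{D}(d'')/\ker d''$, which is a Banach space in the graph norm since $\ker d''$ is closed there; the induced map
$$\bigl(\mathcal{D}(d'')/\ker d''\bigr)\oplus F\longrightarrow \mathcal{L}^{p,1}(\Gamma),\qquad ([u],f)\mapsto d''u+f,$$
is a continuous bijection of Banach spaces, the open mapping theorem makes it an isomorphism, and $\operatorname{im} d''$ is the image of the closed subspace $\bigl(\mathcal{D}(d'')/\ker d''\bigr)\oplus 0$. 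This is a cosmetic fix, not a gap.
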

\begin{proof}
Let $\omega$ be an element of $\mathcal{D}^{p,0}(\Gamma).$
By Proposition $\ref{pr.har_cl},$ $d''\omega=0$  if and only if $\Delta \omega=0.$
Thus, $ H^{p,0}(\Gamma)=\ker d'' = \mathcal{H}^{p,0}(\Gamma).$

\begin{lemma}
The range  $\im \, d''$ of $d''$ is closed. 
\end{lemma}
\begin{proof}
Let $\mathfrak{U}=\{U_i\}_i$ be a cover of $\Gamma.$ 
Let us denote $U_{ij}=U_i\cap U_j.$

Since $\Gamma$ is compact, we may choose $\mathfrak{U}$ in such a way that: 
\begin{enumerate}
 \item $\mathfrak{U}$ is finite cover;
 \item the sequences of sections (\ref{eq.exct_shfs}) are exact over any $U_i$ and $U_{ij}=U_i\cap U_j;$
 \item there is bounded operator $T_U$ as in Lemma \ref{lm.invers} for any $U_i$ and $U_{ij}=U_i\cap U_j.$
\end{enumerate}

Let $C_i(\mathcal{S})$ be the \v{C}ech complex of a presheaf $\mathcal{S}$ and the cover $\mathfrak{U}$ with the differential  $\delta.$ 
In particular, $$C_0(\mathcal{S})=\bigoplus_i\mathcal{S}(U_i),\; C_1=\bigoplus_{i<j}\mathcal{S}(U_{ij}),$$
 and $\delta:C_0(\mathcal{S}) \rightarrow C_1(\mathcal{S}).$

Since $C_i(\mathcal{L}^{p,q})$ is a direct sum of $\mathcal{L}^{p,q}(U)$ it has a structure of a Hilbert space induced from the summands. 
Then $\delta$ is a continuous linear operator.
The kernel $\mathrm{ker}\delta: C_0(\mathcal{L}^{p,q}) \rightarrow C_1(\mathcal{L}^{p,q})$ coincides with $\mathcal{L}^{p,q}(\Gamma).$
Actually, the norm on this kernel does not coincide with the norm on $\mathcal{L}^{p,q}(\Gamma),$ but these two norms are equivalent.
Since $\mathcal{L}^{p,q}(\Gamma)$ is the kernel of a bounded operator, it is a closed subspace of $C_0(\mathcal{L}^{p,q})$.
We will consider $\mathcal{L}^{p,q}(\Gamma)$ as a subspace of $C_0(\mathcal{L}^{p,q})$

The bounded operator $T_U: \mathcal{L}^{p,1}(U) \rightarrow \mathcal{L}^{p,0}(U)$ was defined in Lemma \ref{lm.invers}.
Let $T:C_0(\mathcal{L}^{p,1})\rightarrow C_0(\mathcal{L}^{p,0})$ be a direct sum of $T_{U_i}.$
The composition of operators $\delta T: \mathcal{L}^{p,1}(\Gamma) \rightarrow C_1(\mathcal{L}^{p,0})$ is a continuous linear operator.
Combining the facts that $d'' T\varphi= \varphi,$ the operator $\delta$ commutes with $d'',$ and $\mathrm{ker}\delta = \mathcal{L}^{p,q}(\Gamma),$ for any $\omega\in \mathcal{L}^{p,1}(\Gamma)$ we get $$d'' \delta T \omega =\delta d''  T \omega= \delta \omega = 0.$$  Therefore $\delta T$ is a bounded operator from $\mathcal{L}^{p,1}(\Gamma)$ to the kernel of 
$$d'':  C_1(\mathcal{L}^{p,0}) \rightarrow C_1(\mathcal{L}^{p,1}),$$ which is equal to either $C_1(\mathbb{R}_\Gamma)$ or 
$C_1(\Lambda^1_\Gamma).$  Both spaces $C_1(\mathbb{R}_\Gamma)$ and $C_1(\Lambda^1_\Gamma)$ are finite dimensional. 
There are the quotient maps $$\varepsilon:C_1(\mathbb{R}_\Gamma) \rightarrow H^1(\Gamma,\mathbb{R}_\Gamma)=C_1(\mathbb{R}_\Gamma)/\delta C_0(\mathbb{R}_\Gamma)$$ and $$\varepsilon:C_1(\Lambda^1_\Gamma) \rightarrow H^1(\Gamma,\Lambda^1_\Gamma)=C_1(\Lambda^1_\Gamma)/\delta C_0(\Lambda^1_\Gamma).$$ These maps are continuous because these are linear maps between finite-dimensional vector spaces. 

The kernel of $\varepsilon \delta T $ coincides with $\im \, d''$ in $\mathcal{L}^{p,1}(\Gamma)$.
Indeed, assume  $\omega\in \mathcal{L}^{0,1}(\Gamma)$ and $\varepsilon \delta T \omega = 0,$ then there is a cochain 
$\psi\in C_0(\mathbb{R}_\Gamma)$ such that $\delta T \omega = \delta\psi.$ Since $\delta ( T \omega - \psi) = 0,$ 
we get $T \omega - \psi \in \mathcal{L}^{0,0}(\Gamma)$ and $d'' (T \omega - \psi) = \omega.$ The same arguments works for 
the case  $\omega\in \mathcal{L}^{1,1}(\Gamma).$
Since $\varepsilon \delta T$ is continuous, the kernel is closed, and, consequently, $\im d''$ is closed.
\end{proof}

Since $\im \, d''$ is closed, there is the decomposition $$\mathcal{L}^{p,1}(\Gamma)=\im \, d'' \oplus (\im \, d'')^\perp.$$
Hence $$H^{1}(\mathcal{L}^{p,*}(\Gamma),d'') = \mathcal{L}^{p,1}(\Gamma)/\im\, d'' \cong (\im\, d'') ^\perp$$
The kernel of a closed densely defined operator coincides with the orthogonal complement of the range of the adjoint.
Thus by Proposition \ref{pr.adj} we get $(\im\, d'') ^\perp = \ker{d''^*}.$
By Proposition $\ref{pr.har_cl}$ an element $\omega \in \mathcal{L}^{p,1}(\Gamma)$ is harmonic if and only if $d''^* \omega=0,$
thus $\mathcal{H}^{p,1}(\Gamma)=\ker d''^*$ and $i:\mathcal{H}^{p,1}(\Gamma) \rightarrow H^{1}(\mathcal{L}^{p,*}(\Gamma),d'')$ 
is an isomorphism.\end{proof}

\begin{theorem}
 
 The Laplace-Beltrami operator is a self-adjoint operator.
\end{theorem}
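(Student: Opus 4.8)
The plan is to reduce the assertion to the classical theorem of von Neumann, which states that for any closed densely defined operator $T$ between Hilbert spaces the product $T^*T$ is a nonnegative self-adjoint operator, with domain $\mathcal{D}(T^*T)=\{x\in\mathcal{D}(T):Tx\in\mathcal{D}(T^*)\}$. All the ingredients needed to invoke it are already in place: $d''$ is closed and densely defined, and by Proposition \ref{pr.adj} its adjoint $d''^*$ is also closed and densely defined with $d''^{**}=d''$.

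First I would separate the two bidegrees. On $(p,0)$-forms the term $d''d''^*$ vanishes for dimensional reasons, so $\Delta=d''^*d''$; on $(p,1)$-forms the term $d''^*d''$ vanishes, so $\Delta=d''d''^*$. Since $\mathcal{L}^{p,q}(\Gamma)$ is a single Hilbert space for each fixed pair $(p,q)$, self-adjointness is to be checked for these two operators separately.

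For the case $q=0$ I would set $T=d''$ and observe that $\Delta=d''^*d''=T^*T$, which is self-adjoint by the theorem quoted above. For the case $q=1$ I would instead set $S=d''^*$; using $d''^{**}=d''$ from Proposition \ref{pr.adj} we have $S^*=d''$, hence $\Delta=d''d''^*=S^*S$, again self-adjoint by the same theorem applied to the closed densely defined operator $S$.

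The remaining point, and the only one requiring care, is to confirm that the domain produced by von Neumann's theorem coincides with the domain $\mathcal{D}(\Delta)$ written down before the statement. For $q=0$ the conditions $\omega\in\mathcal{D}(d''^*)$ and $d''^*\omega\in\mathcal{D}(d'')$ are vacuous, since the relevant target space is trivial, so the defining conditions collapse to $\omega\in\mathcal{D}(d'')$ together with $d''\omega\in\mathcal{D}(d''^*)$, which is exactly $\mathcal{D}(T^*T)$; the case $q=1$ is symmetric with $S$ in place of $T$. This bookkeeping is routine, and it is where the bulk of the short argument lies — the substantive content is entirely carried by the von Neumann theorem invoked above, so I do not expect a genuine obstacle here.
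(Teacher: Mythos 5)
Your proposal is correct and follows essentially the same route as the paper: reduce $\Delta$ to $d''^*d''$ or $d''d''^*$ by dimensional reasons, then invoke von Neumann's theorem for closed densely defined operators, using Proposition \ref{pr.adj} (closedness of $d''^*$ and $d''^{**}=d''$) to apply it in both bidegrees. The extra domain bookkeeping you carry out is implicit in the paper's version but adds nothing different in substance.
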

\begin{proof}
By the dimensional reasons the operator $\Delta$ is equal to either $\Delta = d'' d''^*$ or $\Delta = d''^* d''.$
By von Neumann theorem for any closed densely operator $A$ the operator $A^* A$ is a self-adjoint operator \cite[Theorem 7.3]{BSU}.
Thus by Proposition \ref{pr.adj} both $d'' d''^*$ and $d''^* d''$ are self-adjoint.
\end{proof}

\begin{remark}
Let us describe the operator $\Delta$ in terms of local coordinates.
Let $x$ be a local coordinate on an edge of $\Gamma$ and the K\"{a}hler form $g$ is locally given by the equation $g=g(x) d' x \wedge d'' x.$
Then by straightforward computation we obtain
$$\Delta (f(x)) = - \frac{1}{g(x)}\frac{\partial^2 f(x)}{\partial x^2},$$
$$\Delta (f(x) d' x) = - \frac{1}{g(x)}\frac{\partial^2 f(x)}{\partial x^2} + \frac{1}{g^2(x)} \frac{\partial f(x)}{\partial x}  \frac{\partial g(x)}{\partial x} d'x ,$$
$$\Delta (f(x) d'' x) = - \frac{1}{g(x)}\frac{\partial^2 f(x)}{\partial x^2} + \frac{1}{g^2(x)} \frac{\partial f(x)}{\partial x}  \frac{\partial g(x)}{\partial x} d''x ,$$
$$\Delta (f(x) d' x\wedge d'' x) = - \frac{1}{g(x)}\frac{\partial^2 f(x)}{\partial x^2} +  \frac{2}{g^2(x)} \frac{\partial f(x)}{\partial x}  \frac{\partial g(x)}{\partial x} + \frac{2}{g^2(x)}  \frac{\partial^2 g(x)}{\partial x^2} f(x)  - 
\frac{2}{g^3(x)} (\frac{\partial g(x)}{\partial x})^2 f(x) d' x\wedge d''x .$$
\end{remark}

\begin{proposition}\label{pr.commute}
 The Hodge star operator commutes with the Laplace-Beltrami operator, i.e., 
 $* \Delta = \Delta *$ and 
 $\varphi\in\mathcal{D}(\Delta:\mathcal{L}^{p,q}(\Gamma)\rightarrow \mathcal{L}^{p,q}(\Gamma))$ if and only if $*\varphi\in \mathcal{D}(\Delta:\mathcal{L}^{1-p,1-q}(\Gamma)\rightarrow \mathcal{L}^{1-p,1-q}(\Gamma)).$
\end{proposition}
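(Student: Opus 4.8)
The plan is to reduce the statement to two intertwining relations between the Hodge star and the operators $d''$ and $d''^*$, and then to combine them. The only inputs are the formula $d''^* = -*d''*$ from Proposition \ref{pr.adj}, the involutivity $** = (-1)^{p+q}\Id$ on $(p,q)$-forms, the boundedness of $*$ as an isometry of the $\mathcal{L}^{p,q}(\Gamma)$, and the domain characterization of Proposition \ref{pr.adj}, namely that a $(p,1)$-form $\psi$ lies in $\mathcal{D}(d''^*)$ exactly when $*\psi$ is weakly $d''$-differentiable.

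First I would record the domain correspondence: $*$ restricts to a bijection between $\mathcal{D}(d'') = \mathcal{D}^{p,0}(\Gamma)$ and $\mathcal{D}(d''^*)$ in the complementary degrees. Indeed, for $\varphi\in\mathcal{D}^{p,0}(\Gamma)$ the form $*\varphi$ has degree $(1-p,1)$, and $*(*\varphi)=(-1)^{p}\varphi$ is weakly differentiable, so $*\varphi\in\mathcal{D}(d''^*)$ by the criterion; the reverse inclusion follows from the same criterion applied to a $(p,1)$-form in $\mathcal{D}(d''^*)$. On these domains the computations $d''^**\varphi = -*d''**\varphi = (-1)^{p+1}*d''\varphi$ and, symmetrically, $*d''^*\psi = (-1)^{p+1}d''*\psi$ furnish the two intertwining identities $d''^** = (-1)^{p+1}*d''$ and $d''* = (-1)^{p+1}*d''^*$.

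Next I would assemble $\Delta$. By the dimensional vanishing noted before the proposition, $\Delta = d''^*d''$ on a $(p,0)$-form and $\Delta = d''d''^*$ on a $(1-p,1)$-form; since $*$ exchanges these two degrees it suffices to treat $\omega\in\mathcal{L}^{p,0}(\Gamma)$, the remaining degrees following by one further application of $*$. For such $\omega$ the condition $\omega\in\mathcal{D}(\Delta)$ reads $\omega\in\mathcal{D}(d'')$ together with $d''\omega\in\mathcal{D}(d''^*)$. The first is equivalent to $*\omega\in\mathcal{D}(d''^*)$ by the correspondence; the second, applying the correspondence to the $(p,1)$-form $d''\omega$ and using $*d''\omega = (-1)^{p+1}d''^**\omega$, is equivalent to $d''^*(*\omega)\in\mathcal{D}(d'')$. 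These are precisely the conditions for the $(1-p,1)$-form $*\omega$ to lie in $\mathcal{D}(\Delta)$, giving the equivalence of domains. The identity then follows by applying the two intertwining relations in succession: $*\Delta\omega = *d''^*d''\omega = (-1)^{p+1}d''*d''\omega = d''d''^**\omega = \Delta*\omega$.

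The step I expect to be delicate is the domain bookkeeping rather than the sign-chasing. Because $d''$ and $d''^*$ are unbounded, each intertwining identity must be paired with the assertion that $*$ maps one operator domain onto another, and this is exactly where the characterization of $\mathcal{D}(d''^*)$ in Proposition \ref{pr.adj} and the boundedness of $*$ are indispensable. Once the domains are matched, the relation $*\Delta = \Delta*$ is a formal consequence of $d''^* = -*d''*$ and $** = (-1)^{p+q}\Id$.
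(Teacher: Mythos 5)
Your proof is correct and follows essentially the same route as the paper's: both arguments rest on Proposition \ref{pr.adj} (the formula $d''^* = -*d''*$ together with the characterization of $\mathcal{D}(d''^*)$ via $*$) and the involution $** = (-1)^{p+q}\Id$, first matching domains and then verifying the identity by a formal sign computation. Your packaging of the computation into the two intertwining relations $d''^** = (-1)^{p+1}*d''$ and $*d''^* = (-1)^{p+1}d''*$ is only a cosmetic reorganization of the paper's direct substitution of $d''^*=-*d''*$ into $\Delta$.
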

\begin{proof}
Firstly, let us check that  $\varphi\in\mathcal{D}(\Delta:\mathcal{L}^{p,q}(\Gamma)\rightarrow \mathcal{L}^{p,q}(\Gamma))$ if and only if $*\varphi\in \mathcal{D}(\Delta:\mathcal{L}^{1-p,1-q}(\Gamma)\rightarrow \mathcal{L}^{1-p,1-q}(\Gamma)).$

The domain of $\Delta$ equals either
$$\mathcal{D}(\Delta:\mathcal{L}^{p,0}(\Gamma)\rightarrow \mathcal{L}^{p,0}(\Gamma))=\{ \omega: \omega \in \mathcal{D}(d''), d''\omega \in \mathcal{D}(d''^*)\},$$
or
$$\mathcal{D}(\Delta:\mathcal{L}^{p,1}(\Gamma)\rightarrow \mathcal{L}^{p,1}(\Gamma))=\{ \omega: \omega \in \mathcal{D}(d''^*), d''^*\omega \in \mathcal{D}(d'')\}.$$
By Proposition \ref{pr.adj}
$\omega \in \mathcal{D}(d'')$ if and only if $*\omega \in \mathcal{D}(d''^*),$ and $d''^* = - * d''*.$
 Thus,  $d'' \omega\in \mathcal{D}(d''^*)$ if and only if $* d'' \omega \in \mathcal{D}(d'').$
Using the equality $$* d'' = \pm * d''* *=\pm d''^* *$$ and the previous statement we get 
 $d'' \omega\in \mathcal{D}(d''^*)$ if and only if $d''^* *\omega \in \mathcal{D}(d'').$
 Thus $\omega \in \mathcal{D}(d'')$  and  $d''\omega \in \mathcal{D}(d''^*)$ if and only if $*\omega \in \mathcal{D}(d''^*)$ and $d''^* *\omega \in \mathcal{D}(d'')$ which is equivalent to
 $$\omega\in \mathcal{D}(\Delta)  \Longleftrightarrow *\omega\in \mathcal{D}(\Delta).$$

Now, let us check the commutativity $* \Delta = \Delta *.$ The equality $* \Delta = \Delta *$
can be written as 
$$* \Delta = - * d'' *d''* -* *d''* d'' = - d'' *d''* *  - *d'' *d''* = \Delta *.$$
Since $**=(-1)^{p+q}\Id,$ as an operator on  $\mathcal{L}^{p,q}(\Gamma),$ we get 
$$- * d'' *d''* + (-1)^{p+q+1}d''* d'' = (-1)^{p+q+1} d'' *d''  - *d'' *d''*.$$
Thus the equality $* \Delta = \Delta *$ holds.
\end{proof}

\subsection{The main result and final remarks.}
The main result of this paper is the following
\begin{theorem}\label{th.hodge}
 Let $\Gamma$ be a tropical curve of genus $n.$
 The Hodge star operator maps harmonic superform to harmonic superform and the map
 $*:\mathcal{H}^{p,q}(\Gamma) \rightarrow \mathcal{H}^{1-p,1-q}(\Gamma)$ is an isomorphism, and, 
 consequently,  $H^{p,q}(\Gamma)\simeq H^{1-p,1-q}(\Gamma).$
 In particular, $$H^{1,1}(\Gamma)\simeq H^{0,0}(\Gamma) \simeq H^{0}(\Gamma,\mathbb{R})\cong \mathbb{R}$$ and
  $$H^{1,0}(\Gamma)\simeq H^{0,1}(\Gamma) \simeq H^{1}(\Gamma,\mathbb{R}) \cong \mathbb{R}^n.$$ 
\end{theorem}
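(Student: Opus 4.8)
The plan is to treat the theorem as a formal consequence of Propositions \ref{pr.commute} and \ref{pr.hode_iso}, since all of the genuine analysis has already been carried out upstream. First I would prove that $*$ carries harmonic forms to harmonic forms: by Proposition \ref{pr.commute} the Hodge star commutes with $\Delta$ and preserves its domain, so $\Delta\varphi=0$ holds if and only if $\Delta(*\varphi)=0$, and hence $*$ restricts to a map $*:\mathcal{H}^{p,q}(\Gamma)\to\mathcal{H}^{1-p,1-q}(\Gamma)$. To see that it is an isomorphism I would invoke the identity $**=(-1)^{p+q}\Id$ from Subsection \ref{ss.inner_prod}: up to the nonzero scalar $(-1)^{p+q}$, the Hodge star in the opposite bidegree is a two-sided inverse, so $*$ is a bijection between the two spaces of harmonic forms.

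Next I would transport this isomorphism to the cohomology groups. Proposition \ref{pr.hode_iso} supplies isomorphisms $i:\mathcal{H}^{p,q}(\Gamma)\xrightarrow{\sim}H^{p,q}(\Gamma)$ in every bidegree, so the composition
$$H^{p,q}(\Gamma)\xleftarrow{\;i\;}\mathcal{H}^{p,q}(\Gamma)\xrightarrow{\;*\;}\mathcal{H}^{1-p,1-q}(\Gamma)\xrightarrow{\;i\;}H^{1-p,1-q}(\Gamma)$$
yields the asserted isomorphism $H^{p,q}(\Gamma)\simeq H^{1-p,1-q}(\Gamma)$.

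Finally I would read off the dimensions from two base cases. Recall that $H^{0,q}(\Gamma)=H^{q}(\Gamma,\mathbb{R}_\Gamma)\cong H^{q}(\Gamma,\mathbb{R})$ is the ordinary topological cohomology of the graph. Connectedness of $\Gamma$ gives $H^{0,0}(\Gamma)\cong H^{0}(\Gamma,\mathbb{R})\cong\mathbb{R}$, while the definition of genus gives $H^{0,1}(\Gamma)\cong H^{1}(\Gamma,\mathbb{R})\cong\mathbb{R}^{n}$. Applying the Hodge star isomorphism at bidegrees $(0,0)$ and $(0,1)$ then produces $H^{1,1}(\Gamma)\simeq H^{0,0}(\Gamma)\cong\mathbb{R}$ and $H^{1,0}(\Gamma)\simeq H^{0,1}(\Gamma)\cong\mathbb{R}^{n}$, completing the statement.

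I do not expect a genuine obstacle inside this argument: the hard content lives upstream, in the closedness of $\im d''$ underlying Proposition \ref{pr.hode_iso} and in the adjoint and integration-by-parts analysis behind the formula $d''^{*}=-*d''*$. The only points demanding care here are the index bookkeeping for the Hodge star, namely that it sends bidegree $(0,0)$ to $(1,1)$ and $(0,1)$ to $(1,0)$, and the observation that one never needs to compute $H^{1,q}(\Gamma)=H^{q}(\Gamma,\Omega^1_\Gamma)$ directly, since Hodge symmetry recovers it from $H^{0,q}(\Gamma)$.
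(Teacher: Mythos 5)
Your proposal is correct and follows essentially the same route as the paper's own proof: Proposition \ref{pr.commute} to show $*$ preserves harmonicity, the identity $**=\pm\Id$ for bijectivity, Proposition \ref{pr.hode_iso} to transfer the isomorphism to cohomology, and the identification $H^{0,q}(\Gamma)\cong H^{q}(\Gamma,\mathbb{R})$ together with connectedness and the definition of genus to read off the dimensions. No gaps; your additional remarks about index bookkeeping and never needing $H^{q}(\Gamma,\Omega^1_\Gamma)$ directly are accurate.
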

\begin{proof}
By Proposition \ref{pr.commute} if $\omega$ is an element of $\mathcal{H}^{p,q}(\Gamma),$ then $*\omega$ is also a harmonic form, $*\omega\in \mathcal{H}^{1-p,1-q}(\Gamma)$.
Since $**=\pm\Id,$ we get that $*$ is an isomorphism between $\mathcal{H}^{p,q}(\Gamma)$ and $\mathcal{H}^{1-p,1-q}(\Gamma).$
Thus, by Proposition \ref{pr.hode_iso} we get $H^{p,q}(\Gamma)\simeq H^{1-p,1-q}(\Gamma).$
Since by definition $H^{0,q}(\Gamma)$ is the cohomology group of the sheaf $\mathbb{R}_\Gamma$ of locally constant functions,
$H^{0,q}(\Gamma)$ is isomorphic to the usual topological cohomology group $H^{q}(\Gamma,\mathbb{R}).$ 
\end{proof}

The space $H^{0,0}(\Gamma)$ is generated by a constant function. Since the Hodge star of a constant function is 
proportional to the K\"{a}hler form $g,$ the class of $g$ is a generator of $H^{1,1}(\Gamma).$
The space $\mathcal{H}^{1,0}(\Gamma)$ is the space of differential forms with coefficients constant on edges and satisfying the Kirchhoff's law and the Regularity at infinity conditions. Since $* d'x=d'' x,$ the group $\mathcal{H}^{0,1}(\Gamma) \cong H^{0,1}(\Gamma)$ is generated by essentially the same differential forms which 
are considered as $(0,1)-$forms. 
\begin{remark}
Theorem \ref{th.hodge} is a toropical analog of the Hodge theory on a compact Riemann surface.
We proved this theorem using methods of the Hodge theory, but one can prove that there is an isomorphism $H^{p,q}(\Gamma)\simeq H^{1-p,1-q}(\Gamma)$
using a quite simple combinatorial methods. So for the purpose of this result our paper is overcomplicated. One can consider this paper as a proof of concept for the Hodge theory on higher dimensional tropical varieties. 
\end{remark}

\begin{remark}
Now we would like to discuss the relation of our paper to the quantum graphs. 

The research in quantum graphs is mostly devoted to the study of the Schr\"{o}dinger equation on metric graphs. 
This study usually based on study of stationary states, i.e., eigenfunctions of the Laplace operator.
For the Laplace operator to be self-adjoint some boundary conditions at the vertexes of a graph are needed.
There are a variety of such boundary conditions, some of them resembles our boundary conditions.

There is a difference between out approach and the standard quantum graph theory.
Usually only functions are considered, but we also consider differential forms and tensor fields.
We use harmonic forms as a tool to compute some cohomologies and the Laplace-Beltrami operator arise from the chain complex. Usually quantum graphs are not related to the study of cohomologies and chain complexes. 
In the quantum graphs setting the whole spectrum of the Laplace operator is studied, but we are only working with harmonic functions and forms, i.e., with zero-eigenvectors.

In our case there is a Riemannian metric $g$ on $\Gamma$ which we consider as an analog of a  K\"{a}hler form. 
This metric is unrelated to the metric structure on $\Gamma,$ i.e., to the length of edges, but if we take $g$ to be trivial $$g = d'x\wedge d'' x \simeq  dx \otimes dx,$$
then the length $l(e)$ of an edge $e,$ the function $l(e)$ is a part of initial data for the metric graph $\Gamma,$ coincides with the length with respect to the Riemannian metric $g.$ In this case our theory is practically identical to the standard quantum graph theory, at least if we work with functions only. 

Also, in our case we require that all infinite-length edges should have finite length with respect to the Riemannian metric $g,$ otherwise we would get a different behavior of harmonic forms. 
For example, constant functions are harmonic on $\Gamma$, but if there is an infinite-length edge and $g$ is the trivial metric, a non-zero constant function does not belong $L^2$ since it has infinite norm. So if there are infinite-length edges, than the trivial Riemannian metric is not a viable option for our purpose.
\end{remark}

\begin{remark}
It would be interesting to study spectral properties (like asymptotics of eigenvalues, Weyl law, and so on) of the Laplace-Beltrami operator and compare them with spectral properties of complex curves and quantum graphs.
\end{remark}

\end{document}